\newcommand{\vsim}{
\rotatebox[origin=c]{270}{$\sim$}
}
 \newcommand{\pullbackcorner}[1][ul]{\save*!/#1-1.25pc/#1:(-1,1)@^{|-}\restore}
  \newcommand{\C}{\mathcal{C}} 
  \newcommand{\Map}{\mathrm{Map}}
\newcommand{\pp}{\mathfrak{p}}
\newcommand{\Cat}{\mathsf{Cat}}
\newcommand{\End}{\mathrm{End}}
\newtheorem{theorem}{Theorem}[section]
\newtheorem{corollary}[theorem]{Corollary}
\newtheorem{lemma}[theorem]{Lemma}
\newtheorem{proposition}[theorem]{Proposition}
\newtheorem*{theorem*}{Theorem}
\theoremstyle{definition}
\newtheorem{definition}[theorem]{Definition}
\newtheorem{example}[theorem]{Example}
\newtheorem{remark}[theorem]{Remark}
\newtheorem{construction}[theorem]{Construction}
\definecolor{dark-red}{rgb}{0.5,0.15,0.15}
\definecolor{dark-blue}{rgb}{0.15,0.15,0.6}
\definecolor{dark-green}{rgb}{0.15,0.6,0.15}
\definecolor{gRed}{HTML}{ff5100}
\definecolor{gGreen}{HTML}{2b83ba}
\newcommand*\circled[1]{\tikz[baseline=(char.base)]{
            \node[shape=circle,draw,inner sep=2pt] (char) {#1};}}
\title{Model structures on finite total orders}
\date{\today}
\author{Scott Balchin}
\address[Balchin]{Max Planck Institute For Mathematics, Vivatsgasse 7, 53111 Bonn, Germany}
\email{balchin@mpim-bonn.mpg.de}
\author{Kyle Ormsby}
\address[Ormsby]{Department of Mathematics, Reed College, Portland, OR 97202, USA}
\email{ormsbyk@reed.edu}
\author{Ang\'{e}lica M.~Osorno}
\address[Osorno]{Department of Mathematics, Reed College, Portland, OR 97202, USA}
\email{aosorno@reed.edu}
\author{Constanze Roitzheim}
\address[Roitzheim]{School of Mathematics, Statistics and Actuarial Science, University of Kent, Kent CT2 7FS, UK}
\email{c.roitzheim@kent.ac.uk}
\begin{document}

\begin{abstract}
We initiate the study of model structures on (categories induced by) lattice posets, a subject we dub \emph{homotopical combinatorics}. In the case of a finite total order $[n]$, we enumerate all model structures, exhibiting a rich combinatorial structure encoded by Shapiro's Catalan triangle. This is an application of previous work of the authors on the theory of $N_\infty$-operads for cyclic groups of prime power order, along with new structural insights concerning extending  choices of certain model structures on subcategories of $[n]$.
\end{abstract}

\maketitle

\tableofcontents
  
\section{Introduction}

A Quillen model structure is a framework in which one can perform abstract homotopy theory within a category of interest. At its core, the theory of model structures is designed to solve the problem of formally inverting a chosen class of morphisms $\mathsf{W}$ called \emph{weak equivalences} in a category $\mathcal{C}$. It achieves this through the use of extra structure coming from a choice of \emph{fibrations} and \emph{cofibrations} which satisfy various topologically--inspired compatibility axioms. Indeed, potentially the most familiar example of a model structure is the one on the  category $\mathbf{Top}$ of topological spaces and continuous maps where the weak equivalences are the weak homotopy equivalences, the fibrations are the Serre fibrations, and the cofibrations are those maps which are retracts of relative cell complexes.

Fix a category $\mathcal{C}$. It is natural to ask if we can enumerate or determine structural properties of the collection $Q(\mathcal{C})$ of all Quillen model structures on $\mathcal{C}$. It turns out that the behaviour of $Q(\mathcal{C})$ can be wildly different based on the choice of $\mathcal{C}$. Even when fixing the weak equivalences, we can exhibit a range of behaviours.

To see this, consider the category $\textbf{sSet}$ of simplicial sets and natural transformations between them. The weak equivalences in the standard Kan--Quillen model structure on $\textbf{sSet}$ are those morphisms $f \colon X \to Y$ such that the geometric realization $|f| \colon |X| \to |Y|$ is a weak homotopy equivalence of spaces. The fibrations are the \emph{Kan fibrations} and the cofibrations are the monomorphisms~\cite{quillen}. One may wonder if there is another model structure on $\textbf{sSet}$ with these weak equivalences but with different fibrations and cofibrations. Beke answers this question in the positive by proving the existence of an infinite collection of such model structures~\cite{beke}.

On the other end of the spectrum, consider the category $\textbf{Cat}$ of small categories and functors between them. The natural choice of weak equivalences being equivalences of categories leads to a model structure on $\textbf{Cat}$ where the fibrations are the isofibrations and the cofibrations are the functors which are injective on objects~\cite{rekz}. Amazingly, this is the only model structure on $\textbf{Cat}$ with these weak equivalences, in stark contrast to the case of simplicial sets~\cite{canonical}.

Therefore, we have seen that even when fixing the weak equivalences, there could be a whole range of behaviours for choices of fibrations and cofibrations. The only classification result for $Q(\mathcal{C})$ that we are aware of is the full description of model structures on the category $\mathbf{Set}$ of sets and functions between them. One can explicitly prove that there are exactly nine model structures on $\mathbf{Set}$, that is, $\# Q (\mathbf{Set}) = 9$~\cite{tobyomar}.

In this paper, we initiate the systematic study of model structures on categories induced by lattices. Here a lattice $P$ is a partially ordered set admitting finite meets and joins, and the induced category $\mathcal P$ has objects $P$ and a unique morphism $p\to q$ if and only if $p\le q$ in $P$. Since model structures encode homotopy theories and lattices are of fundamental interest in combinatorics, one can think of our work as an initial investigation into \emph{homotopical combinatorics}.\footnote{At the same time, \emph{combinatorial homotopy theory} might be an appropriate appellation since we will expose the rich combinatorial structure undergirding model structures on lattices. Beware, though, that \emph{combinatorial model structures} already exist in the literature and have a rather different flavor.}

Our main result is a full classification of model structures on finite total orders. That is, for a fixed $n \in \mathbb{N}$ we consider the poset $[n]=\{0<1 < \dots <n\}$ as a category, and classify the structure of all model structures on $[n]$. As $[n]$ has finitely many objects and finitely many morphisms, it follows that $Q([n])$ is a finite set and as such can be enumerated. We prove the following.

\begin{theorem*}[Theorems ~\ref{thm:Qn} and ~\ref{thm:refined}]
Let $Q([n])$ be the collection of Quillen model structures on $[n]$. Then
\[
\# Q([n]) = \binom{2n+1}{n}
\]
and, for $0\le k\le n$, exactly
\[
\frac{2(k+1)}{n+k+2}\binom{2n+1}{n-k}
\]
of these model structures have homotopy category isomorphic to $[k]$.
\end{theorem*}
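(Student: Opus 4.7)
The plan is to decompose each model structure on $[n]$ along the partition of $[n]$ induced by its weak equivalences. Since $\mathsf{W}$ satisfies 2-out-of-3 inside a total order, the relation ``$i\sim j$ iff $(i,j)\in\mathsf{W}$'' is an equivalence relation whose classes are convex, partitioning $[n]$ into intervals $I_0<I_1<\cdots<I_k$, so the homotopy category is $[k]$. Pushouts in $[n]$ are maxima and pullbacks are minima, so closure of $\mathsf{C}$ under pushouts forces ``if $(i,j)\in\mathsf{C}$ and $i\le k\le j$, then $(k,j)\in\mathsf{C}$,'' and dually for $\mathsf{F}$. Restricted to morphisms within a single interval $I_r$, inside which every morphism is a weak equivalence, $(\mathsf{C}\cap I_r,\mathsf{F}\cap I_r)$ is a weak factorization system on the total order $I_r$.

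The heart of the argument, and the step I expect to be hardest, is the converse: a partition $I_0<\cdots<I_k$ together with a WFS $(L_r,R_r)$ on each $I_r$ determines a unique model structure on $[n]$, and every model structure arises this way. I would establish this in three moves. First, examining the two factorizations of the minimal cross-interval morphism $(a_r,b_{r+1})$, where $a_r=\max I_r$ and $b_{r+1}=\min I_{r+1}$, forces it to lie in $\mathsf{C}\cap\mathsf{F}$. Second, combining this with composition closure and the source/target closure properties, I would show that for $i\in I_r$, $j\in I_s$, $r<s$, the condition $(i,j)\in\mathsf{C}$ holds precisely when $(b_s,j)\in L_s$, and dually $(i,j)\in\mathsf{F}$ holds precisely when $(i,a_r)\in R_r$. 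Third, given a partition and WFSs I would define $\mathsf{W},\mathsf{C},\mathsf{F}$ by these rules and verify the model category axioms directly; the delicate checks are lifting and factorization for cross-interval morphisms, which reduce to the interval-level data via the boundary maps.

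With the decomposition in hand, the enumeration becomes a generating function computation. A WFS on $[m]$ is determined by its right class, which is a transfer system in the sense of the authors' prior work on $N_\infty$-operads for $C_{p^m}$, so the number of WFSs on $[m]$ equals the Catalan number $C_{m+1}$. A partition of $[n]$ into $k+1$ consecutive intervals of sizes $s_0,\ldots,s_k$ with $\sum s_r=n+1$ and $s_r\ge 1$ therefore contributes $\prod_{r=0}^k C_{s_r}$ model structures with homotopy category $[k]$. Writing $C(x)=\sum_{m\ge 0}C_m x^m$, which satisfies $C=1+xC^2$, the count for fixed $k$ equals
\[
[x^{n+1}](xC(x)^2)^{k+1} \;=\; [x^{n-k}]C(x)^{2k+2},
\]
and Raney's formula $[x^m]C(x)^r=\tfrac{r}{2m+r}\binom{2m+r}{m}$ evaluates this to $\tfrac{k+1}{n+1}\binom{2n+2}{n-k}=\tfrac{2(k+1)}{n+k+2}\binom{2n+1}{n-k}$. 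Summing on $k$ via $\sum_{k\ge 0}(xC^2)^{k+1}=(C-1)/(2-C)$ and extracting $[x^{n+1}]$ produces the total $\binom{2n+1}{n}$.
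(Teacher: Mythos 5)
Your proposal is correct and its structural backbone---weak equivalences cut $[n]$ into an interval partition, each block carries a weak factorization system (equivalently a transfer system, hence $\mathsf{Cat}(s)$ choices on a block of size $s$), and this data reconstructs the model structure uniquely---is exactly the paper's Corollary~\ref{cor:transferaf}. The verification of the extension step differs in flavor: you characterize cross-interval cofibrations and fibrations explicitly via the boundary elements $a_r=\max I_r$, $b_s=\min I_s$ (which is a correct characterization, since an acyclic fibration obstructing a lift must live entirely inside the target's block), whereas the paper factors through the maximal fibrant replacement $R^{\mathrm{max}}$ and minimal cofibrant replacement $Q^{\mathrm{min}}$ and pushout/pullback stability. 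Where you genuinely diverge is the combinatorial endgame: the paper proves Lemma~\ref{lemma:CpId} by a lattice-path bijection (ordered partitions recorded by diagonal crossings, Dyck paths above/below) and then cites Shapiro for the refined count, while you run a generating-function computation with $C=1+xC^2$ and Raney's formula $[x^m]C^r=\tfrac{r}{2m+r}\binom{2m+r}{m}$. Your route is more self-contained for the refined statistic (the paper outsources $\sum_{i_1+\cdots+i_{k+1}=n+1}\prod\Cat(i_j)=\tfrac{2(k+1)}{n+k+2}\binom{2n+1}{n-k}$ to \cite{shapiro}); the paper's route buys an explicit bijection $Q([n])\to\End([n])$ that it exploits in Remark~\ref{rmk:bijection}. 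The algebra checks out: $[x^{n+1}](xC^2)^{k+1}=[x^{n-k}]C^{2k+2}=\tfrac{k+1}{n+1}\binom{2n+2}{n-k}=\tfrac{2(k+1)}{n+k+2}\binom{2n+1}{n-k}$.

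One justification needs repair: you assert that convexity of the weak equivalence classes follows from 2-out-of-3 in a total order. It does not. The class $\mathsf{W}=\{\text{isos},\,0\to 2\}$ on $[2]$ satisfies 2-out-of-3 (knowing only the composite $0\to 2$ is a weak equivalence forces nothing about $0\to 1$ or $1\to 2$), yet $\{0,2\}$ is not an interval. Convexity is \emph{decomposability} of $\mathsf{W}$, which requires the factorization and lifting axioms and is the content of Lemma~\ref{lem:decomp} (Droz--Zakharevich). The fact is true and readily available, so this is a misattribution rather than a fatal gap, but as written that step of your argument is unsupported.
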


This enumeration result follows from a surprising link to the theory of $N_\infty$-operads, which are a tool from equivariant homotopy theory concerning the classification of different types of homotopy commutativity. The first and fourth authors, along with D. Barnes, proved an enumeration result for $N_\infty$-operads up to weak equivalence for the cyclic groups $C_{p^n}$. In particular it was shown that
\[
\# N_{\infty}(C_{p^n}) = \mathsf{Cat}(n+1) = \frac{1}{n+2} \binom{2n+2}{n+1}
\]
where $\mathsf{Cat}(n+1)$ is the $(n+1)$-th Catalan number. It was then observed by the second and third author and collaborators in~\cite{fooqw} that there is a bijection between the set of weak equivalence classes of $N_\infty$-operads on $C_{p^n}$ and the set of model structures on $[n] = \operatorname{Sub}(C_{p^n})$ where all morphisms are weak equivalences. This is the key insight that led to the development of this paper.

The second main result appearing in this paper is a full description of the Bousfield lattice of $[n]$. Starting from any model structure, one can consider adding more weak equivalences. If this is done in a way that preserves the cofibrations (resp., fibrations) then the process is called left (resp., right) Bousfield localization. 

In general it is a very hard problem to decide which model structures can be obtained from other others through a process of left and right localizations. In the case of $[n]$, we obtain a full classification. 

\begin{theorem*}[Theorem~\ref{thm:bousfieldlocn}]
Every model structure on $[n]$ can be obtained via a sequence of left and right Bousfield localizations starting at the trivial model structure, where only the isomorphisms are weak equivalences and all maps are fibrations and cofibrations.
\end{theorem*}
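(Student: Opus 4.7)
The plan is to argue by induction on the number of non-identity edge weak equivalences of the target $M = (W, C, F)$, i.e., the number of $c \in \{0, \ldots, n-1\}$ with $(c, c+1) \in W$. The base case is zero edges, which forces $M$ to be the trivial model structure, so nothing is required. The inductive step reduces to exhibiting $M$ as a single-step Bousfield localization of some model structure $M^-$ on $[n]$ with strictly fewer edge weak equivalences; iterating then produces the desired sequence from the trivial model structure to $M$.

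To build $M^-$, I choose an edge weak equivalence $e : c \to c+1$ in $W$. The factorization axiom in $[n]$ forces $e$ into at least one of $C \cap W$ or $F \cap W$, since the only intermediate objects available for factoring $e$ are $c$ and $c+1$; and the lifting argument---the square with identities across the top and bottom and $e$ on both sides would demand a lift $c+1 \to c$, which does not exist in the total order---prevents $e$ from being in both. Suppose $e \in C \cap W$ (the case $e \in F \cap W$ is dual). Define $M^- = (W^-, C, F^-)$, where $W^-$ corresponds to the partition obtained from $W$ by splitting the interval containing $c, c+1$ at position $c$ (so that $e \notin W^-$) and $F^-$ is the class of morphisms with the right lifting property against $C \cap W^-$. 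By construction, $M$ is a left Bousfield localization of $M^-$ at $e$ provided $M^-$ is itself a valid model structure.

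The main obstacle is showing that a suitable choice of edge $e$ always produces a valid $M^-$. The $2$-of-$3$ axiom for $W^-$ holds because interval partitions always satisfy it, and the closure axioms for $F^-$ follow from its definition via a lifting property; the delicate condition is factorization. A multi-edge trivial cofibration $(a, d) \in C \cap W$ with $a \leq c < c+1 \leq d$ may fail to factor in $M^-$, since any $C \cap W^-$ prefix from $a$ must now stay strictly below $c+1$, potentially leaving no valid intermediate object. I would overcome this by choosing $e$ at the ``boundary'' between trivial cofibrations and trivial fibrations within a non-singleton interval of $W$---so that no such crossing multi-edge trivial cofibration exists---and by using the pushout-closure of cofibrations in $[n]$ (namely, $(i,j) \in C$ implies $(k,j) \in C$ for $i \leq k \leq j$), the dual pullback-closure of fibrations, and the combinatorial classification of Theorem~\ref{thm:Qn} to verify factorization for the resulting $M^-$. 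Establishing the existence of such a safe edge $e$ in every non-trivial $M$ is the crux of the argument.
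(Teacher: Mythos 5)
Your top-level plan --- downward induction on the number of edge weak equivalences, exhibiting $M$ as a single $L_c$ or $R_c$ applied to a model structure $M^-$ whose interval partition refines that of $M$ by one cut --- is viable, and it is essentially the paper's argument run in reverse (the paper inducts upward on block size, realizing the unique $\odot$-decomposition of each block's transfer system via $L_{i-1}R_i$; see Proposition~\ref{prop:build} and Lemma~\ref{lem:itisodot}). However, you have misidentified the obstruction, and consequently the ``safe edge'' you propose to look for need not exist. Since $L_c$ preserves cofibrations and the acyclic fibrations are determined from the cofibrations by lifting, any candidate $M^-$ with $\mathsf{C}(M^-)=\mathsf{C}(M)$ necessarily has $\mathsf{AF}(M^-)=\mathsf{AF}(M)$; as $\mathsf{AF}(M^-)\subseteq\mathsf{W}(M^-)$, the genuine obstruction to $M=L_cM^-$ is an \emph{acyclic fibration} of $M$ crossing the cut at $c$. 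Acyclic cofibrations crossing the cut are harmless for $L_c$: they simply become non-acyclic cofibrations of $M^-$. (Dually, the obstruction to $R_c$ is a crossing acyclic cofibration.) Your criterion --- pick $e\in\mathsf{C}\cap\mathsf{W}$ so that no multi-edge trivial cofibration crosses --- therefore tests the wrong class, and it fails outright on the paper's own example: the contractible model structure on $[3]$ with $\mathsf{AF}=\{0\to1,\ 0\to2\}\cup\mathrm{iso}$ and $\mathsf{AC}=\{0\to3,\ 1\to2,\ 1\to3,\ 2\to3\}\cup\mathrm{iso}$. Here $0\to3\in\mathsf{AC}$ crosses every cut, so no edge satisfies your condition; yet the theorem holds for this $M$ because no acyclic fibration reaches $3$, so the cut between $2$ and $3$ is safe, and $L_2$ applied to the model structure with partition $[0,2]\amalg[3,3]$ and the same $\mathsf{AF}$ recovers $M$.

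The remaining content --- that every non-singleton block admits a cut avoided either by all acyclic fibrations or by all acyclic cofibrations --- is exactly what you flag as ``the crux'' and do not prove, and it is not a formality. The clean route is the pivot of the unique decomposition $Z=X\odot Y$ of the block's transfer system (Proposition~\ref{prop:build}): if the pivot $p$ is not the bottom element of the block, then no edge of $Z=\mathsf{AF}$ crosses the cut just below $p$ (edges of $X$ and of $Y$ stay on their own sides, and the pivot's edges all start at $p$), so $L_{p-1}$ applies; if $p$ is the bottom element $m$, then $m\to k\in\mathsf{AF}$ for every $k$ in the block, hence by the description of the left class as $\mathcal{E}(R)^c$ no acyclic cofibration leaves $m$, and $R_m$ applies. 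Without this (or an equivalent) input, your induction does not close.
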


Alongside these theorems, we prove a plethora of results regarding model structures on general finite lattices and $[n]$ including the relation to premodel structures and the existence of an involution on the collection of model structures.

\subsection*{Relation to other work}
Model structures on posets have been studied in a non-enumerative fashion by Droz and Zakharevich \cite{dz21}. While we make use of several of their structural results in Section~\ref{sec:prelims}, their emphasis (proving that extending to a model structure is not a first-order property) is quite different from ours.

We also note that the homotopy theory of the \emph{category of posets} has been studied in \cite{raptis}. Our focus on the homotopy theory of \emph{poset categories} is only similar linguistically.

\subsection*{Outline}

In Section~\ref{sec:prelims} we recall the relevant information regarding model categories, with a particular focus on reductions that can be done when the underlying category is a finite lattice.

The main structural result appears in Section~\ref{sec:extending}, which proves that to uniquely define a model structure, it is enough to define a collection of \emph{contractible submodels}. This immediately leads to the aforementioned enumeration result which is the subject of Section~\ref{sec:enum}.

In Section~\ref{sec:bousfield}, we prove that every model structure on $[n]$ can be obtained through a zig-zag of left and right Bousfield localizations starting from the trivial model structure. Finally, in Section~\ref{sec:further}, we provide a list of potential directions for further exploration.

\subsection*{Acknowledgements}

The first author would like to thank the Max Planck Institute for Mathematics for its
hospitality. The second and third authors were supported by NSF grant DMS--1709302. The fourth author would like to thank the London Mathematical Society for an Emmy Noether Fellowship. The authors thank Michael Kinyon and Federico Ardila for a helpful discussion regarding the history of Lemma \ref{lemma:CpId}, and the anonymous referee for helpful input.

\section{Recollections on model categories}\label{sec:prelims}

\subsection{Model categories}\label{subsec:modelcats}

We begin by  recalling some basic definitions and conventions about model categories. Many readers may be familiar with these, but it will be beneficial to collect the necessary tools for our results in one place.  We refer the reader to~\cite{hovey,dwyerspa} or~\cite{handbook} for further details, as well as the original reference of~\cite{quillen}.

\begin{definition}
For any two morphisms $i \colon A \to B$ and $p \colon X \to Y$ in a category $\mathcal{C}$, we say that $i$ \emph{has the left lifting property (LLP) with respect to $p$}, or $p$ \emph{has the right lifting (RLP) property with respect to $i$},
if for all commutative squares of the form
\begin{equation}\label{liftdiagram}
\begin{gathered}\xymatrix{
A \ar[r] \ar[d]_{i} & X \ar[d]^{p} \\ B \ar[r] & Y
}\end{gathered}    
\end{equation}
there exists a lift $h \colon B \to X$ which makes the resulting diagram commute. If $i$ lifts on the left of $p$ we write $i \boxslash p$. For any class $\mathcal{S}$ of morphisms in $\mathcal{C}$ we write
\begin{align*}
\mathcal{S}^{\boxslash}& = \{g \in \operatorname{Mor}(\mathcal{C}) \mid f \boxslash g \text{ for all } f \in \mathcal{S} \},\\
{}^{\boxslash} \mathcal{S} &= \{f \in \operatorname{Mor}(\mathcal{C}) \mid f \boxslash g \text{ for all } g \in \mathcal{S} \}.
\end{align*}
Note that $\mathcal{S} \subseteq {}^\boxslash \mathcal{T}$ if and only if $\mathcal{T} \subseteq \mathcal{S}^\boxslash$. We write $\mathcal{S}\boxslash \mathcal{T}$ when this holds.
\end{definition}

\begin{definition}\label{defn:modelcat}
A \emph{model category} is a category $\mathcal{C}$ equipped with three distinguished classes of morphisms, namely
\begin{itemize}
    \item weak equivalences -- $\mathsf{W}$ -- whose elements we will represent as $\xymatrix{X \ar[r]^{\sim} & Y}$,
    \item fibrations -- $\mathsf{F}$ -- whose elements we will represent as $\xymatrix{X \ar@{->>}[r] & Y}$,
    \item cofibrations -- $\mathsf{C}$ -- whose elements we will represent as $\xymatrix{X \ar@{^(->}[r] & Y}$,
\end{itemize}
each of which is closed under composition. A morphism in $\mathsf{AF}:=\mathsf{W} \cap \mathsf{F}$ (resp., $\mathsf{AC}:=\mathsf{W} \cap \mathsf{C}$) is said to be an \emph{acyclic fibration} (resp., \emph{acyclic cofibration}). These distinguished classes of morphisms and the category $\mathcal{C}$ are required to satisfy the following axioms.
\begin{enumerate}[align=left]
    \item[MC1)] The category $\mathcal{C}$ has all finite limits and colimits. In particular, there is an initial object $\varnothing$ and terminal object $\ast$.
    \item[MC2)] The class $\mathsf{W}$ satisfies the two-out-of-three property.
    \item[MC3)] The three distinguished classes of morphisms are closed under retracts in the arrow category.
    \item[MC4)] Given a commutative diagram of the form (\ref{liftdiagram}) above, a lift exists when either $i$ is a cofibration and $p$ is an acyclic fibration, or when $i$ is an acyclic cofibration and $p$ is a fibration.
    \item[MC5)] Each morphism $f$ in $\mathcal{C}$ an be factored in two ways:
    \begin{enumerate}
        \item[1.] $f=pi$, where $i$ is a cofibration and $p$ is an acyclic fibration.
        \item[2.] $f=pi$, where $p$ is a fibration and $i$ is an acyclic cofibration.
    \end{enumerate}
\end{enumerate}

If objects $X$ and $Y$ lie in the same weak equivalence class then we shall write $X \simeq Y$.
\end{definition}

\begin{lemma}\label{lem:wafac}
Let $\mathcal{C}$ be a model category. Then $\mathsf{W} = \mathsf{AF} \circ \mathsf{AC}$ (i.e., those maps that can be written as the composition of an acyclic cofibration and an acyclic fibration).
\end{lemma}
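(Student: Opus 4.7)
The plan is to prove the two inclusions separately, using only the closure axioms together with MC2 and MC5. Both inclusions are standard facts, so no real obstacle is expected; the content is essentially unpacking the definitions.

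For the inclusion $\mathsf{AF}\circ\mathsf{AC}\subseteq\mathsf{W}$, take any composite $f=pi$ where $i\in\mathsf{AC}$ and $p\in\mathsf{AF}$. Since $\mathsf{AC},\mathsf{AF}\subseteq\mathsf{W}$ by definition, and since Definition~\ref{defn:modelcat} requires $\mathsf{W}$ to be closed under composition, $f\in\mathsf{W}$. (Alternatively, one could invoke the two-out-of-three property MC2.)

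For the reverse inclusion $\mathsf{W}\subseteq\mathsf{AF}\circ\mathsf{AC}$, let $f\in\mathsf{W}$. Apply MC5(1) to factor $f=pi$ where $i$ is a cofibration and $p$ is an acyclic fibration. In particular $p\in\mathsf{W}$. Since $f\in\mathsf{W}$ as well, MC2 (two-out-of-three) forces $i\in\mathsf{W}$, whence $i\in\mathsf{AC}$. Thus $f=pi\in\mathsf{AF}\circ\mathsf{AC}$.

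As noted, the only subtlety is picking which of the two MC5 factorizations to use; either works by the symmetric two-out-of-three argument, and we choose the first for definiteness. This completes the proof sketch.
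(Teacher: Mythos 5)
Your proof is correct and follows essentially the same route as the paper: the forward inclusion uses closure of $\mathsf{W}$ under composition, and the reverse inclusion applies the $(\mathsf{C},\mathsf{AF})$ factorization to $f$ and upgrades the cofibration to an acyclic cofibration via two-out-of-three. No issues.
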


\begin{proof}
As the weak equivalences are closed under composition, we have that any element of $\mathsf{AF} \circ \mathsf{AC}$ is in $\mathsf{W}$. Conversely, let $f \in \mathsf{W}$. Then by the factorization axiom we can write $f = pi$ where $p$ is an acyclic fibration and $i$ is a cofibration. By the 2-out-of-3 property of weak equivalences it follows that $i$ is is a weak equivalence, and hence an acyclic cofibration as required. 
\end{proof}

\begin{remark}\label{rem:overdetermined}
It follows from the axioms of a model category along with the above lemma that the data of a model structure is over-determined, as, for example, the cofibrations can be recovered using only the weak equivalences and fibrations via lifting properties. Explicitly, we have ${}^\boxslash \mathsf{AF} = \mathsf{C}$, \emph{i.e.}, the cofibrations are exactly those morphisms with the LLP with respect to all acyclic fibrations (and the dual statement for fibrations).

Following similar reasoning, it is sufficient to provide just the data of the cofibrations and weak equivalences, which then uniquely determines the fibrations. We will repeatedly use this fact as we will naturally define our model structures via their weak equivalences and acyclic fibrations.
\end{remark}

\begin{definition}
Let $\mathcal{C}$ be a model category. An object $X$ in $\mathcal{C}$ is said to be
\begin{itemize}
    \item \emph{fibrant} if the unique morphism $X \to \ast$ is a fibration,
    \item \emph{cofibrant} if the unique morphism $\varnothing \to X$ is a cofibration,
    \item \emph{bifibrant} if it is both fibrant and cofibrant.
\end{itemize}
\end{definition}

For any object $X$ in a model category $\mathcal{C}$ it is possible to approximate $X$ --- up to weak equivalence --- by a (co)fibrant object using the factorization axiom. In particular, for every $X$ there is a choice of
\begin{itemize}
    \item An object $X^{\mathsf{f}}$ and an acyclic cofibration $\xymatrix{X \ar@{^(->}[r]^{\sim}& X^{\mathsf{f}}}$. We say $X^{\mathsf{f}}$ is a \emph{fibrant replacement} of $X$.
    \item An object $X^{\mathsf{c}}$ and an acyclic fibration $\xymatrix{X^{\mathsf{c}} \ar@{->>}[r]^{\sim}& X}$. We say $X^{\mathsf{c}}$ is a \emph{cofibrant replacement} of $X$.
\end{itemize}

The selling point for model categories is that they provide a framework in which to perform localizations of categories with respect to a class of weak equivalences, and the bifibrant objects play a pivotal role in this. In particular, given a model category $\mathcal{C}$, one wishes to form the universal category $\mathcal{C}[\mathsf{W}^{-1}]$ in which the weak equivalences have been formally inverted. This is a model for the the \emph{homotopy category} as we now define.


\begin{definition}\label{def:homotopycat}
Let $\mathcal{C}$ be a model category. Then its \emph{homotopy category} $\operatorname{Ho}(\mathcal{C})$ is the category $\mathcal{C}[\mathsf{W}^{-1}]$ obtained from $\mathcal{C}$ by formally inverting the weak equivalences.
\end{definition}

\begin{remark}
    In general, given a category $\mathcal{C}$, and a class of morphisms $\mathsf{W}$ in $\mathcal{C}$, formally inverting $\mathsf{W}$ leads to requiring zig-zags of morphisms of potentially arbitrary length, so one does not obtain a category with small hom-sets. A key feature of the theory of model categories is that the homotopy category can be constructed using zig-zags of length most two, so Definition~\ref{def:homotopycat} is well defined.
\end{remark}


We have, on occasion, reason to want to compare model structures. That is, we would like a notion of functors $f \colon \mathcal{C} \to \mathcal{D}$ between model categories which descend to a functor between the associated homotopy categories. These are exactly the \emph{Quillen functors} as we now define. The fact that they have the desired properties can be found in, for example,~\cite[Section 1.3]{hovey}.

\begin{definition}
Let $\mathcal{C}$ and $\mathcal{D}$ be model categories. An adjoint pair of functors
\[
F :\mathcal{C} \rightleftarrows \mathcal{D} : U
\]
is a \emph{Quillen pair} if the left adjoint $F$ preserves cofibrations and the right adjoint $U$ preserves fibrations. We say that $F$ is a \emph{left Quillen functor} and $U$ is a \emph{right Quillen functor}.
\end{definition}

We finish this section by highlighting a helpful fact regarding the behaviour of isomorphisms in a model category. This will be of use when discussing model structures on lattices where the only isomorphisms are the identity morphisms.

\begin{lemma}\label{lem:isomorphism}
Let $\mathcal{C}$ be a model category. Then a morphism $f \colon X \to Y$ is an isomorphism if and only if it is in all three classes of maps $\mathsf{W}$, $\mathsf{F}$ and $\mathsf{C}$.
\end{lemma}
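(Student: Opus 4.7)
The plan is to prove both directions, splitting the forward direction according to the three classes.

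For the backward direction, suppose $f \in \mathsf{W} \cap \mathsf{F} \cap \mathsf{C}$, so that $f$ is simultaneously an acyclic fibration and an acyclic cofibration. Applying MC4 to the commuting square
\[
\xymatrix{X \ar[r]^{\mathrm{id}_X} \ar[d]_{f} & X \ar[d]^{f} \\ Y \ar[r]_{\mathrm{id}_Y} & Y}
\]
with the left $f$ viewed as a cofibration and the right $f$ as an acyclic fibration produces a map $g \colon Y \to X$ satisfying $gf = \mathrm{id}_X$ and $fg = \mathrm{id}_Y$, so $f$ is an isomorphism.

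For the forward direction, assume $f$ is an isomorphism with inverse $g$. By Remark~\ref{rem:overdetermined}, to conclude $f \in \mathsf{C}$ it suffices to verify $f \boxslash p$ for every $p \in \mathsf{AF}$. Given a commutative square with top $a$, bottom $b$, left $f$, and right $p$, the composite $a \circ g$ is a lift: $(a \circ g)\circ f = a$ since $gf = \mathrm{id}_X$, and $p \circ (a \circ g) = b \circ f \circ g = b$ from the commutativity of the square. The same idea with $g$ supplying the lift dually establishes $f \in \mathsf{F}$.

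The most delicate step is $f \in \mathsf{W}$. My plan is to apply MC5 to factor $f = p\circ i$ with $i \in \mathsf{AC}$ and $p \in \mathsf{F}$, so in particular $i \in \mathsf{W}$, and then exhibit $f$ as a retract of $i$ in the arrow category via
\[
\xymatrix{X \ar[r]^{\mathrm{id}} \ar[d]_{f} & X \ar[r]^{\mathrm{id}} \ar[d]_{i} & X \ar[d]^{f} \\ Y \ar[r]_{i \circ g} & Z \ar[r]_{p} & Y}
\]
whose commutativity and retract identity reduce to $pi = f$ and $gf = \mathrm{id}_X$ (note $p \circ (i\circ g) = f\circ g = \mathrm{id}_Y$). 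Closure under retracts (MC3) then yields $f \in \mathsf{W}$. The main obstacle is precisely this last step: since $\mathsf{W}$ is not characterised by a lifting property, one cannot mimic the earlier argument; instead one must manufacture a witness in $\mathsf{W}$ via factorisation and transport the conclusion back to $f$ through a retract argument.
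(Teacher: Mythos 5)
Your proof is correct, and the backward direction is identical to the paper's. The forward direction takes a slightly different route. The paper shows that the inverse solves every lifting problem of $f$ against an arbitrary fibration (resp.\ cofibration) and then invokes the characterizations $\mathsf{AC}={}^{\boxslash}\mathsf{F}$ and $\mathsf{AF}=\mathsf{C}^{\boxslash}$, so that $f$ lands in $\mathsf{AC}\cap\mathsf{AF}$ and membership in $\mathsf{W}$ comes for free. You instead use the characterizations $\mathsf{C}={}^{\boxslash}\mathsf{AF}$ and $\mathsf{F}=\mathsf{AC}^{\boxslash}$ from Remark~\ref{rem:overdetermined}, which only deliver the non-acyclic classes, and so you need a separate argument for $f\in\mathsf{W}$; your factorization-plus-retract argument for that step checks out (the diagram commutes and the bottom composite is $p\circ i\circ g = f\circ g=\mathrm{id}_Y$). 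Two small remarks: first, your $\mathsf{W}$-step can be streamlined, since $f$ is already a retract of $\mathrm{id}_X$ in the arrow category via the bottom row $Y\xrightarrow{\,g\,}X\xrightarrow{\,f\,}Y$, so no appeal to MC5 is needed; second, the paper's route avoids the retract axiom explicitly only because it is absorbed into the lifting characterization of $\mathsf{AC}$ and $\mathsf{AF}$, so the two proofs rest on essentially the same foundations. Either way the argument is sound.
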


\begin{proof}
Suppose $f \colon X \to Y$ is in all three classes of maps. Then we can form a commutative diagram
\[\xymatrix{
    X \ar[r]^{\mathrm{id}_X} \ar[d]_f & X \ar[d]^f \\
    Y \ar[r]_{\mathrm{id}_Y} & Y.
}\]
\emph{A priori} the left vertical map is an acyclic cofibration, and the right vertical map is a fibration. Thus there is a lift $h \colon Y \to X$ rendering the two triangles commutative, that is, $f \circ h = \mathrm{id}_Y$ and $h \circ f = \mathrm{id}_X$, showing that $f$ is an isomorphism as required.

For the converse statement, assume that $f \colon X \to Y$ is an isomorphism with inverse $f^{-1}$. Then we note we can solve any lifting problems of the form
\[\xymatrix{
X \ar[d]_f \ar[r] & A \ar[d] & & A \ar[d] \ar[r] & X \ar[d]^f \\
Y \ar[r] & B & & B \ar[r] & Y
}\]
simply by making use of the inverse $f^{-1}$. In particular, $f \colon X \to Y$ lifts on the left with respect to any fibration (resp., lifts on the right with respect to any cofibration) and is therefore an acyclic cofibration (resp., acyclic fibration). As such, $f$ is in all three classes of maps.
\end{proof}

\subsection{Weak factorization systems}

Although Definition~\ref{defn:modelcat} is the definition of a model category that one will usually see in the literature, for the purposes of this article it will be more convenient to instead consider model categories through the lens of \emph{weak factorization systems} and \emph{premodel structures}.

\begin{definition}
Let $\mathcal{L}$ and $\mathcal{R}$ be classes of morphisms in a category $\mathcal{C}$. Then the pair $(\mathcal{L},\mathcal{R})$ is a \emph{weak factorization system} if
\begin{enumerate}
\item every morphism $f$ in $\mathcal{C}$ can be factored as $f=pi$ with $i \in \mathcal{L}$ and $p \in \mathcal{R}$,
\item $\mathcal{L} \boxslash \mathcal{R}$,
\item $\mathcal{L}$ and $\mathcal{R}$ are closed under retracts.
\end{enumerate}
\end{definition}

It is immediate from the definitions that every model structure on $\C$ gives rise to two weak factorization systems, $(\mathsf{C},\mathsf{AF})$ and $(\mathsf{AC},\mathsf{F})$.  Conversely, one sees that model categories can be completely characterized in terms of weak factorization systems, as was noted in \cite{JT}.
\begin{theorem}\label{thm:equivdef}
Let $\mathcal{C}$ be a category with all finite limits and colimits, and let $\mathsf{W}$, $\mathsf{C}$, and $\mathsf{F}$ be three classes of morphisms. Then $\mathsf{W}$, $\mathsf{C}$, and $\mathsf{F}$ determine a model category structure on $\mathcal{C}$ if and only if
\begin{enumerate}
    \item $\mathsf{W}$ satisfies the 2-out-of-3 property, and
    \item $(\mathsf{C},\mathsf{AF})$ and $(\mathsf{AC},\mathsf{F})$ are weak factorization systems on $\mathcal{C}$.
\end{enumerate}
\end{theorem}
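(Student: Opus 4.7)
The forward direction is straightforward: given a model structure, axioms MC3, MC4, and MC5 translate directly into the retract, lifting, and factorization axioms of weak factorization systems for the pairs $(\mathsf{C}, \mathsf{AF})$ and $(\mathsf{AC}, \mathsf{F})$, while MC2 is condition (1).

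For the converse, assume (1) and (2). Axiom MC1 is the hypothesis on $\mathcal{C}$, MC2 is (1), and MC4 (lifting) together with MC5 (factorization) are immediate from the two weak factorization systems. The remaining axiom MC3 is automatic for $\mathsf{C}$, $\mathsf{AC}$, $\mathsf{F}$, and $\mathsf{AF}$, since these are the left or right classes of weak factorization systems, so the crux is to establish retract closure for $\mathsf{W}$ itself.

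To prove this, I would first note that the proof of Lemma~\ref{lem:wafac} uses only conditions (1) and (2), so under our assumptions $\mathsf{W} = \mathsf{AF}\circ \mathsf{AC}$. Now suppose $f$ is a retract in the arrow category of some $g\in \mathsf{W}$; factor $f = q\circ i$ and $g = q'\circ i'$ via the $(\mathsf{AC},\mathsf{F})$ weak factorization system, so that $q'\in \mathsf{W}\cap \mathsf{F} = \mathsf{AF}$ by 2-out-of-3. Using the retract data for $(f,g)$ together with the lifting property $\mathsf{AC}\boxslash \mathsf{F}$, one constructs comparison maps between the intermediate objects that exhibit $q$ as a retract of $q'$ in the arrow category. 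Retract closure of $\mathsf{AF}$ then gives $q\in \mathsf{AF}\subseteq \mathsf{W}$, and $f = q\circ i\in \mathsf{W}$ follows from closure of $\mathsf{W}$ under composition (which itself comes from 2-out-of-3 and the fact that identities are in $\mathsf{AC}\cap \mathsf{AF}\subseteq \mathsf{W}$ as isomorphisms).

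The main obstacle is the retract-in-the-arrow-category construction in the previous paragraph: the pair of lifts produced directly from $\mathsf{AC}\boxslash \mathsf{F}$ does not automatically compose to the identity on the intermediate object of $f$, so some additional bookkeeping is needed (for instance, a more careful use of the characterization $\mathsf{W} = \mathsf{AF}\circ \mathsf{AC}$, or an auxiliary lifting argument to correct the discrepancy). This is the one genuinely non-formal step in the proof, and is where I would expect to spend the most effort.
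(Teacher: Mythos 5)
The paper itself does not prove this statement (it simply cites Joyal--Tierney), so there is no in-paper argument to compare against; I am judging your proposal on its own terms. Your architecture is the standard one and is correct as far as it goes: the forward direction is routine, and in the converse everything except retract closure of $\mathsf{W}$ is formal (MC1 is the hypothesis on $\mathcal{C}$, MC2 is condition (1), MC4 and MC5 are the lifting and factorization halves of the two weak factorization systems, and retract and composition closure of $\mathsf{C}$, $\mathsf{F}$, $\mathsf{AC}$, $\mathsf{AF}$ follow from the WFS axioms via the retract argument). You have also correctly located the crux, namely that $\mathsf{W}$ is closed under retracts.

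At that crux, however, the proof stops. Factoring $f$ and $g$ separately through $(\mathsf{AC},\mathsf{F})$ and comparing the intermediate objects by two lifts does not work, as you yourself note: the lifts need not compose to the identity, and this is not fixable by ``bookkeeping'' --- a different construction is required. The standard repair: given retract data $rs=1_A$, $r's'=1_B$ between $f\colon A\to B$ and $g\colon X\to Y$ with $g\in\mathsf{W}$, factor only $f=pi$ with $i\colon A\to C$ in $\mathsf{AC}$ and $p\in\mathsf{F}$, and form the pushout $D=C\cup_A X$ of $i$ along $s$, with legs $i''\colon X\to D$ and $s''\colon C\to D$. Then $i''\in\mathsf{AC}$ (left classes are stable under pushout), the induced map $u\colon D\to Y$ satisfies $ui''=g$, hence $u\in\mathsf{W}$ by 2-out-of-3, and --- the key point --- $C$ is a retract of $D$: the map $v\colon D\to C$ induced by $ir$ and $1_C$ satisfies $vs''=1_C$ \emph{by construction}, with no lift needed. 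Now factor $u=q'j'$ with $j'\in\mathsf{AC}$ and $q'\in\mathsf{F}$, so $q'\in\mathsf{AF}$ by 2-out-of-3; a single lift $k$ of $j'$ against $p$ in the square with top $v$ and bottom $r'q'$ exhibits $p$ as a retract of $q'$, the retract identity $kj's''=vs''=1_C$ holding automatically. Retract closure of $\mathsf{AF}$ gives $p\in\mathsf{W}$, whence $f=pi\in\mathsf{W}$. Without this pushout step (or an equivalent device) your argument has a genuine gap precisely where you predicted the effort would lie.
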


The idea of \emph{premodel structures} as introduced in~\cite{barton} (which are also called \emph{Quillen structures} in~\cite{quillenstruct}) is that asking for the 2-out-of-3 property for the weak equivalences is counter intuitive in certain situations such as when taking (co)limits of model categories. Instead, one may just ask for a compatible pair of weak factorization systems. 

\begin{definition}\label{def:premodel}
A \emph{premodel category} is a category $\mathcal{C}$ with all finite limits and colimits equipped with four classes of maps $\mathsf{C}$, $\mathsf{AC}$, $\mathsf{F}$ and $\mathsf{AF}$ such that the pairs $(\mathsf{C},\mathsf{AF})$ and $(\mathsf{AC},\mathsf{F})$ are weak factorization systems on $\mathcal{C}$, and $\mathsf{AC} \subseteq \mathsf{C}$ (equivalently, $\mathsf{AF} \subseteq \mathsf{F}$).
\end{definition}

Note that any model structure is a premodel structure, as follows directly from Lemma~\ref{lem:wafac} and Theorem~\ref{thm:equivdef}, but the converse does not hold \emph{ex consilio}.

\subsection{Model structures on lattices}

In \S\ref{subsec:modelcats} we have introduced model categories for general categories $\mathcal{C}$. However, our focus in this article will be the category arising from the poset $[n]$, \emph{i.e.}, the poset whose objects are $\{0,1,\dots, n\}$ and with order relation $i \leq j$ inherited from the ordering of $\mathbb{N}$. 

As we require our categories to have all small limits and colimits, we are therefore only interested in those posets with all limits and colimits. These posets are complete lattices, and any finite lattice is complete. As such we will simply call a finite complete and cocomplete poset a lattice. We recall that the limits are computed via meets $\wedge$ (\emph{i.e.}, the greatest lower bound) and colimits are computed via joins $\vee$ (\emph{i.e.}, the least upper bound). In the case of $[n]$ the meet operation corresponds to $\min$ while the join operation corresponds to $\max$.

We will now collect several relevant reductions to the basic properties of model structures that can be made in the case that $\mathcal{C} = \mathcal{P}$ is a lattice from~\cite{dz21}. 

Explicitly, by a \emph{lattice} we mean a skeletal category $\mathcal{P}$ such that for all objects $X$ and $Y$, $\#\operatorname{Hom}_\mathcal{P}(X,Y) \leq 1$ which admits all finite limits and colimits. Given a classical lattice $P$, we define a category $\mathcal{P}$ with $\operatorname{ob}(\mathcal{P}) = P$ and
\[
\operatorname{Hom}_\mathcal{P}(X,Y)=
\begin{cases}
\{\ast\}& \text{if } X \leq Y,\\
\, \, \, \varnothing & \text{else}.
\end{cases}
\]
Note that in a lattice there are no non-trivial retracts so axioms regarding retracts are vacuous in this setting. 

The first result regarding model structures on lattices is that the weak equivalences necessarily satisfy a stronger property than 2-out-of-3. We begin with a definition.

\begin{definition}
A class $\mathcal{E}$ of morphisms in a category $\mathcal{C}$ is \emph{decomposable} if $f \in \mathcal{E}$ with $f=gh$ for some $g,h\in \mathrm{Mor}(\mathcal C)$ implies that both $g$ and $h$ are in $\mathcal{E}$.
\end{definition}

\begin{lemma}[{\cite[Proposition 1.8]{dz21}}]
\label{lem:decomp}
Let $\mathcal{P}$ be a lattice. Then the weak equivalences of any model structure on $\mathcal{P}$ are decomposable.
\end{lemma}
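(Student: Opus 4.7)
The plan is to show $g \in \mathsf W$, from which $h \in \mathsf W$ follows by two-out-of-three applied to $f = g\circ h$. The strategy combines the canonical factorization of $f$ supplied by Lemma~\ref{lem:wafac} with the given factorization through $Y$, exploiting the fact that in a lattice pushouts and pullbacks are realised respectively by joins and meets.

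First I would apply Lemma~\ref{lem:wafac} to factor $f = \alpha \circ \beta$ with $\beta \colon X \to W$ in $\mathsf{AC}$ and $\alpha \colon W \to Z$ in $\mathsf{AF}$. Taking the pushout of $\beta$ along $h$, which in the lattice is computed as the join $W \vee Y$, produces the square
\[
\xymatrix{
X \ar[r]^{h} \ar[d]_{\beta} & Y \ar[d]^{\beta'} \\
W \ar[r]_-{\ell'} & W \vee Y
}
\]
in which $\beta' \in \mathsf{AC}$, since $\mathsf{AC}$ is the left class of the weak factorization system $(\mathsf{AC},\mathsf F)$ and therefore closed under pushout. The universal property of the pushout, applied to the cocone $(\alpha, g)$ at $Z$ (which commutes because $gh = f = \alpha\beta$), then produces a unique morphism $\alpha'' \colon W \vee Y \to Z$ satisfying $\alpha'' \circ \beta' = g$ and $\alpha'' \circ \ell' = \alpha$.

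The key step is to observe that $\ell'$ is also a pullback: since $W \le W \vee Y$ in the lattice, one has $W \wedge (W \vee Y) = W$, so the square
\[
\xymatrix{
W \ar[r]^{\ell'} \ar@{=}[d] & W \vee Y \ar[d]^{\alpha''} \\
W \ar[r]_{\alpha} & Z
}
\]
exhibits $\ell'$ as the pullback of $\alpha$ along $\alpha''$. Because $\mathsf{AF}$ is the right class of the weak factorization system $(\mathsf C, \mathsf{AF})$ it is closed under pullback, so $\ell' \in \mathsf{AF} \subseteq \mathsf W$. Two-out-of-three applied to $\alpha = \alpha'' \circ \ell'$ now gives $\alpha'' \in \mathsf W$, whence $g = \alpha'' \circ \beta'$ is a composite of weak equivalences and is itself in $\mathsf W$.

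The main obstacle I anticipate is that the intermediate object $W$ produced by Lemma~\ref{lem:wafac} has no \emph{a priori} relationship to $Y$; in a general lattice $W$ and $Y$ need not even be comparable, so one cannot directly compare the two factorizations via a lift. The pushout/pullback sandwich above circumvents this by manufacturing $W \vee Y$ as a canonical upper bound of both, while the tautology $W = W \wedge (W \vee Y)$ simultaneously exhibits $W$ as the corresponding meet; this lets the closure of $\mathsf{AC}$ under pushout and of $\mathsf{AF}$ under pullback be brought to bear in tandem, after which only formal two-out-of-three remains.
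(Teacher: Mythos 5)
The paper does not actually prove this lemma; it simply cites \cite[Proposition~1.8]{dz21}, so there is no in-text argument to compare against. Your proof is correct and self-contained. The two closure properties you invoke are exactly the standard facts that the left class of a weak factorization system is closed under cobase change and the right class under base change (the paper itself uses both, e.g.\ in the proof that $R^{\mathrm{max}}(X)$ is fibrant), and the absorption identity $W \wedge (W \vee Y) = W$ is what makes the square $W \Rightarrow W \vee Y \to Z$ a genuine pullback, so $\ell' \in \mathsf{AF}$. From there $\alpha'' \in \mathsf{W}$ by two-out-of-three, $g = \alpha'' \circ \beta'$ is a composite of weak equivalences, and $h \in \mathsf{W}$ follows by two-out-of-three again. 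Your closing remark correctly identifies the only real subtlety — that the intermediate object $W$ of the factorization need not be comparable to $Y$ in a general lattice — and the join/meet sandwich is precisely the right way to dispose of it; in a total order one could instead argue by cases on whether $W \le Y$ or $Y \le W$, but your argument works for arbitrary lattices, which is the generality the lemma claims.
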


The second result that we will make use of is a restriction on the number of bifibrant objects in any given weak equivalence class.

\begin{lemma}[{\cite[Lemma 1.6]{dz21}}]
\label{lem:uniquebif}
Let $\mathcal{P}$ be a lattice equipped with a model structure. Then each weak equivalence class has a unique bifibrant object.
\end{lemma}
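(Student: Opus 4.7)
The plan is to show that any two bifibrant objects $X, Y$ lying in the same weak equivalence class must satisfy both $X \le Y$ and $Y \le X$, so that antisymmetry of the lattice order forces $X = Y$.

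First I would reduce to the case of a direct weak equivalence $f \colon X \to Y$ inside $\mathcal{P}$. Since $X \simeq Y$ means they are isomorphic in $\operatorname{Ho}(\mathcal{P})$, and the standard theory of model categories tells us that for bifibrant $X, Y$ the hom-set $\operatorname{Hom}_{\operatorname{Ho}(\mathcal{P})}(X,Y)$ is a quotient of $\operatorname{Hom}_\mathcal{P}(X,Y)$ by a homotopy relation, the isomorphism between them must be represented by an honest morphism $f \in \operatorname{Hom}_\mathcal{P}(X,Y)$; since $f$ becomes an isomorphism in $\operatorname{Ho}(\mathcal{P})$, it is itself a weak equivalence. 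In particular $X \le Y$.

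With such an $f$ in hand, I would factor it as $X \xrightarrow{i} Z \xrightarrow{p} Y$ with $i$ an acyclic cofibration and $p$ an acyclic fibration (invoking Lemma~\ref{lem:wafac}), so that in the lattice $X \le Z \le Y$. Two lifting arguments then collapse the chain. Since $Y$ is cofibrant, the cofibration $\varnothing \to Y$ lifts against the acyclic fibration $p$ with bottom edge $\operatorname{id}_Y$, producing a morphism $Y \to Z$; combined with the existing $Z \le Y$, antisymmetry gives $Y = Z$. Dually, since $X$ is fibrant, the acyclic cofibration $i$ lifts against the fibration $X \to \ast$ with top edge $\operatorname{id}_X$, producing a morphism $Z \to X$; combined with the existing $X \le Z$, antisymmetry gives $X = Z$. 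Therefore $X = Z = Y$.

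The main obstacle is the initial reduction to a direct weak equivalence; the factorization-and-lifting portion is essentially routine. Invoking the homotopy-category description of hom-sets between bifibrant objects is the cleanest route, but it imports a chunk of standard model-categorical background. An alternative that stays inside $\mathcal{P}$ would be to induct on the length of a zig-zag $X \leftarrow A_1 \to A_2 \leftarrow \cdots \to Y$ of weak equivalences, at each stage routing through the meet or join of consecutive terms and using the decomposability result of Lemma~\ref{lem:decomp} to verify the rerouted arrows remain weak equivalences. This avoids $\operatorname{Ho}(\mathcal{P})$ entirely but is noticeably more fiddly, as the intermediate meets and joins need not themselves be bifibrant.
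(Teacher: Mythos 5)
Your argument is correct. Note that the paper itself offers no proof of this lemma---it is imported verbatim from \cite[Lemma 1.6]{dz21}---so there is nothing internal to compare against; judged on its own terms, your proof works. The reduction step is legitimate: two objects in the same weak equivalence class become isomorphic in $\operatorname{Ho}(\mathcal{P})$, and for bifibrant $X,Y$ the standard equivalence $\operatorname{Ho}(\mathcal{C})\simeq \mathcal{C}_{cf}/{\sim}$ guarantees the isomorphism is represented by an honest $f\in\operatorname{Hom}_{\mathcal{P}}(X,Y)$, which is a weak equivalence by saturation. Both lifting squares in the second half check out ($\varnothing\to Y$ against $p$, and $i$ against $X\to\ast$). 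Two small remarks. First, once you have invoked the homotopy-category description you are essentially done without the factorization: the inverse isomorphism in $\operatorname{Ho}(\mathcal{P})$ is likewise represented by a morphism $Y\to X$ in $\mathcal{P}$, so $X\le Y$ and $Y\le X$ and antisymmetry finishes immediately; the factorization-and-lifting paragraph is a valid but redundant detour (its virtue is that it only uses the forward morphism $f$, so it pairs well with your zig-zag alternative, which would avoid $\operatorname{Ho}(\mathcal{P})$ altogether). Second, the lemma as stated asserts existence as well as uniqueness; you only prove uniqueness. Existence is immediate---compose the cofibrant replacement $X^{\mathsf{c}}\to X$ with a fibrant replacement of $X^{\mathsf{c}}$, noting that $\varnothing\to X^{\mathsf{c}}\to (X^{\mathsf{c}})^{\mathsf{f}}$ is a composite of cofibrations---but it deserves a sentence.
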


In the case of a countable lattice, the homotopy category as defined in Definition~\ref{def:homotopycat} admits a much simpler description.

\begin{lemma}[{\cite[Theorem 5.4]{dz21}}]\label{lem:homotopycategoryposet}
Let $\mathcal{P}$ be a countable lattice equipped with a model structure. Then $\operatorname{Ho}(\mathcal{P})$ is equivalent to the full subcategory on the bifibrant objects.
\end{lemma}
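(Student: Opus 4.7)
The plan is to show that the composite $\mathcal{P}_{bf} \hookrightarrow \mathcal{P} \to \operatorname{Ho}(\mathcal{P})$ is an equivalence of categories, where $\mathcal{P}_{bf}$ denotes the full subcategory of $\mathcal{P}$ on the bifibrant objects. Since $\mathcal{P}$ is thin, any two parallel morphisms in $\mathcal{P}$ (hence in $\mathcal{P}_{bf}$) coincide, so faithfulness of the composite is automatic; it remains to verify essential surjectivity and fullness.

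Essential surjectivity is handled by two applications of the factorization axiom: given any $X \in \mathcal{P}$, factor $\varnothing \to X$ as $\varnothing \hookrightarrow X^{\mathsf{c}} \twoheadrightarrow X$ to produce a cofibrant replacement, and then factor $X^{\mathsf{c}} \to \ast$ as $X^{\mathsf{c}} \hookrightarrow (X^{\mathsf{c}})^{\mathsf{f}} \twoheadrightarrow \ast$ to produce a bifibrant object $(X^{\mathsf{c}})^{\mathsf{f}}$ weakly equivalent to $X$ and hence isomorphic to it in $\operatorname{Ho}(\mathcal{P})$.

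The substantive step is fullness. Let $X, Y$ be bifibrant and represent a morphism $X \to Y$ in $\operatorname{Ho}(\mathcal{P})$ by a zig-zag $X = Z_0 \to Z_1 \xleftarrow{\sim} Z_2 \to \cdots \to Z_n = Y$ with backward arrows weak equivalences. My plan is to bifibrantly replace each $Z_i$ to obtain $B_i$; Lemma~\ref{lem:uniquebif} forces $B_0 = X$, $B_n = Y$, and $B_i = B_{i+1}$ across every weak equivalence arrow. For each forward arrow $Z_i \to Z_{i+1}$ I would produce $B_i \to B_{i+1}$ via a two-step lifting argument. First, the composite $Z_i^{\mathsf{c}} \twoheadrightarrow Z_i \to Z_{i+1}$ lifts through the acyclic fibration $Z_{i+1}^{\mathsf{c}} \twoheadrightarrow Z_{i+1}$ because $\varnothing \to Z_i^{\mathsf{c}}$ is a cofibration (MC4), and composing with $Z_{i+1}^{\mathsf{c}} \hookrightarrow B_{i+1}$ yields $Z_i^{\mathsf{c}} \to B_{i+1}$. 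Second, the acyclic cofibration $Z_i^{\mathsf{c}} \hookrightarrow B_i$ lifts against the fibration $B_{i+1} \to \ast$ (again by MC4), producing the desired $B_i \to B_{i+1}$. Concatenating over $i$ yields $X = B_0 \leq B_1 \leq \cdots \leq B_n = Y$ in $\mathcal{P}$.

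The main obstacle is this zig-zag collapsing argument for fullness, which is the thin-category counterpart of the classical theorem that morphisms in $\operatorname{Ho}(\mathcal{C})$ between bifibrant objects are represented by strict morphisms up to homotopy. Thinness of $\mathcal{P}$ eliminates the homotopy ambiguity entirely, and the lifting axioms supply the required strict representatives via the two-step construction above; the countability hypothesis, while not strictly needed for this argument, ensures in particular that $\operatorname{Ho}(\mathcal{P})$ has small hom-sets.
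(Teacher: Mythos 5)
The paper offers no proof of this lemma --- it is imported wholesale from \cite[Theorem 5.4]{dz21} --- so your argument is necessarily independent. Its skeleton is sound: faithfulness is indeed automatic from thinness of $\mathcal{P}_{bf}$, essential surjectivity via $(X^{\mathsf{c}})^{\mathsf{f}}$ is correct, and the two-step lifting construction producing $B_i \le B_{i+1}$ from a forward arrow $Z_i \to Z_{i+1}$ (with Lemma~\ref{lem:uniquebif} forcing $B_i = B_{i+1}$ across weak-equivalence arrows) is valid.

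There is, however, a genuine gap at the end of the fullness step. What your zig-zag collapse proves is: if $\operatorname{Ho}(\mathcal{P})(X,Y) \neq \varnothing$ for bifibrant $X,Y$, then $X \le Y$ in $\mathcal{P}$. Fullness demands more, namely that the \emph{given} morphism $\phi \in \operatorname{Ho}(\mathcal{P})(X,Y)$ is the image of the relation $X \le Y$ under the localization functor. That is automatic only if $\operatorname{Ho}(\mathcal{P})(X,Y)$ has at most one element, and this does not follow from thinness of $\mathcal{P}$: Gabriel--Zisman localizations of posets are not thin in general (localizing the face poset of a triangulated circle at all morphisms yields its fundamental groupoid, which has automorphism group $\mathbb{Z}$), so some model-categorical input is unavoidable. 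The missing input is exactly Quillen's fundamental theorem --- for $X$ cofibrant and $Y$ fibrant the canonical map $\operatorname{Hom}_{\mathcal{C}}(X,Y)\to\operatorname{Ho}(\mathcal{C})(X,Y)$ is surjective with fibers the homotopy classes (see \cite[Theorem 1.2.10]{hovey}). You allude to this result in your closing paragraph but never actually deploy it; note that once you do, the entire lemma is immediate (hom-sets in $\mathcal{P}_{bf}$ have at most one element, so the homotopy relation is vacuous and $\mathcal{P}_{bf}\simeq\operatorname{Ho}(\mathcal{P})$), and the zig-zag argument becomes redundant. So either cite the fundamental theorem and discard the zig-zag collapse, or keep the argument self-contained but then additionally show that your constructed morphism represents $\phi$ --- equivalently, that any two zig-zags between bifibrant objects are identified in the localization --- which is precisely the part you have skipped.
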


\begin{remark}
We highlight that Lemmas~\ref{lem:decomp}, \ref{lem:uniquebif} and \ref{lem:homotopycategoryposet} do not hold for an arbitrary model category, and the proofs critically use the fact that we are working in a lattice. For Lemma~\ref{lem:decomp} one can cook up a simple counter example in a category using retracts. For Lemmas~\ref{lem:uniquebif} and \ref{lem:homotopycategoryposet}
 it is enough to observe that there are model structures in which all objects are bifibrant, and such that the weak equivalences are more than just the isomorphisms. A classical example of such a model structure is the Str{\o}m model structure on $\mathbf{Top}$~\cite{strom}. 
 \end{remark}

We now introduce terminology for two extremes of model structures that we will encounter in subsequent sections.

\begin{definition}\label{def:contractible}
Let $\mathcal{C}$ be a complete and cocomplete category equipped with a model structure.
\begin{itemize}
    \item If all morphisms in $\mathcal{C}$ are weak equivalences, then we say that the model structure is \emph{contractible}.
    \item If only the isomorphisms in $\mathcal{C}$ are weak equivalences, then we say that the model structure is \emph{trivial}.
\end{itemize}
\end{definition}

\begin{remark}
We shall see that for an arbitrary lattice that there are many contractible model structures, however, there is only one trivial model structure on any given category. Indeed, if only the isomorphisms are weak equivalences, then it is an instructive exercise to check that all maps must both be fibrations and cofibrations so that the required factorizations exist.
\end{remark}

The terminology of Definition~\ref{def:contractible} is justified by the following consequence of Lemma \ref{lem:homotopycategoryposet}.

\begin{corollary}
Let $\mathcal{P}$ be a lattice equipped with a model structure.
\begin{itemize}
    \item If $\mathcal{P}$ is a contractible model structure then $\operatorname{Ho}(\mathcal{P})$ is equivalent to the category with a single object and only the identity morphism.
    \item If $\mathcal{P}$ is a trivial model structure then $\operatorname{Ho}(\mathcal{P})$ is equivalent to $\mathcal{P}$.
\end{itemize}
\end{corollary}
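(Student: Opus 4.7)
The plan is to deduce both statements directly from Lemma~\ref{lem:homotopycategoryposet}, which identifies $\operatorname{Ho}(\mathcal{P})$ with the full subcategory on the bifibrant objects (a lattice is in particular countable, being finite by our convention). So the task reduces to identifying the bifibrant objects and their weak equivalence classes in each case.

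For the contractible case, I would first observe that because $\mathcal{P}$ has an initial object $\varnothing$, every object $X$ receives a morphism $\varnothing \to X$, which by hypothesis is a weak equivalence; hence $X \simeq \varnothing$ for every $X$, so $\mathcal{P}$ consists of a single weak equivalence class. Lemma~\ref{lem:uniquebif} then guarantees that this class contains exactly one bifibrant object $B$. Applying Lemma~\ref{lem:homotopycategoryposet}, $\operatorname{Ho}(\mathcal{P})$ is equivalent to the full subcategory on $\{B\}$, which is the terminal category.

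For the trivial case, I would use that a lattice is skeletal, so the only isomorphisms are the identities; therefore the weak equivalence class of each object $X$ is the singleton $\{X\}$. It remains to check that every object is bifibrant. As noted in the remark preceding the corollary, in the trivial model structure every morphism must be both a fibration and a cofibration in order for the factorization axiom MC5 to be satisfiable (the only weak equivalences available for the factorizations are the identities, so the non-trivial factor in each factorization is forced to be a plain fibration or cofibration). In particular, the unique maps $\varnothing \to X$ and $X \to \ast$ are a cofibration and a fibration respectively, so every object is bifibrant. Lemma~\ref{lem:homotopycategoryposet} then gives $\operatorname{Ho}(\mathcal{P}) \simeq \mathcal{P}$.

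The only non-routine point is justifying that all maps are both fibrations and cofibrations in the trivial case, but this is precisely the content of the remark already stated in the text, so no real obstacle remains; the corollary is essentially a bookkeeping consequence of the structural lemmas imported from~\cite{dz21}.
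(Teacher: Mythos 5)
Your proposal is correct and matches the paper's intent exactly: the corollary is stated there as an unproved consequence of Lemma~\ref{lem:homotopycategoryposet}, and you supply precisely the routine verifications (one weak equivalence class with a unique bifibrant object via Lemma~\ref{lem:uniquebif} in the contractible case; singleton classes with every object bifibrant in the trivial case) that make that deduction go through. No issues.
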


\begin{remark}\label{rem:contractibleiswfs}
Every weak factorization system $(\mathcal{L}, \mathcal{R})$ on an arbitrary category $\mathcal{C}$ determines a contractible model structure by letting
\begin{itemize}
    \item $\mathsf{W} = \mathrm{all}$,
    \item $\mathsf{F} = \mathcal{R}$,
    \item $\mathsf{C} = \mathcal{L}$.
\end{itemize}
In fact, every contractible model structure arises as such. In other words, there is a bijection of sets

\[
\xymatrix{
\{\text{weak factorization systems on }\mathcal{C}\} \ar@{<=>}[rr]& &
\{\text{contractible model structures on }\mathcal{C}\}}.
\]
\end{remark}

There is yet another way to classify the contractible model structures on $\mathcal{C}$ in the case that $\mathcal{C}$ is a finite lattice. We begin with a definition.

\begin{definition}\label{def:transfersystem}
A \emph{transfer system} in a category $\mathcal{C}$ is a wide subcategory of $\mathcal{C}$ which is closed under
pullbacks by arbitrary morphisms in $\mathcal{C}$.
\end{definition}
\noindent (Here a subcategory of $\mathcal{C}$ is \emph{wide} when it contains all the objects of $\mathcal{C}$.)

The next result tells us that the data of a transfer system is exactly the data of a weak factorization system when $\mathcal{P}$ is a finite lattice. In particular, one needs to only provide the right (or left) set. In the following result, we consider poset structures on transfer systems and weak factorization systems. The order on transfer systems is by inclusion, and the order on weak factorization systems is given by inclusion of the right class.  

\begin{proposition}[{\cite[Proposition 4.11, Theorem 4.13]{fooqw}}]\label{fooqwresult}
Let $\mathcal{P}$ be a finite lattice and $\mathcal{R}$ a transfer system on $\mathcal{P}$. Then there is a unique weak factorization system $(\mathcal{L},\mathcal{R})$ on $\mathcal{P}$. In particular the assignment
\[
\mathcal{R} \longleftrightarrow ({}^\boxslash \mathcal{R}, \mathcal{R})
\]
is an isomorphism between the poset of transfer systems on $\mathcal{P}$ and the poset of weak factorization systems on $\mathcal{P}$. 
\end{proposition}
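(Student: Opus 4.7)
The plan is to establish the bijection by proving the two directions of the correspondence separately. First, I will show that the right class of any weak factorization system on a finite lattice is automatically a transfer system. Second, and this is where the main work lies, given a transfer system $\mathcal{R}$ I will show that $({}^\boxslash\mathcal{R},\mathcal{R})$ is a weak factorization system. Uniqueness of the weak factorization system with prescribed right class then follows from the standard retract argument forcing $\mathcal{L} = {}^\boxslash\mathcal{R}$ in any weak factorization system (and this argument is trivialised in the poset setting since there are no non-trivial retracts); the assertion that the assignment is an order-isomorphism is then automatic, since both sides are ordered by inclusion of the right class.

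For the first direction, wideness of $\mathcal{R}$ follows since identities lift against every morphism and so belong to both classes, and closure under composition is a standard consequence of the lifting axiom. For closure under pullback of a morphism $g \in \mathcal{R}$, any lifting problem for $i \in {}^\boxslash \mathcal{R}$ against the pullback $g'$ of $g$ pushes forward to a lifting problem against $g$ itself, whose solution induces a lift of the original square via the universal property of the pullback. In the poset setting, this simplifies to an elementary argument with meets.

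The main work is the construction of factorizations. For a morphism $x \le y$ in $\mathcal{P}$, I will set
\[
z \;=\; \bigwedge \{\, w \in \mathcal{P} \mid x \le w \le y \text{ and } (w \to y) \in \mathcal{R}\,\}.
\]
The set on the right contains $y$ and is closed under binary meets: given $w_1,w_2$ in it, pulling $(w_1 \to y)$ back along $(w_2 \to y)$ yields $(w_1 \wedge w_2 \to w_2) \in \mathcal{R}$, and composing with $(w_2 \to y)$ produces $(w_1 \wedge w_2 \to y) \in \mathcal{R}$. Since $\mathcal{P}$ is finite, $z$ itself belongs to this set, giving a factorization $x \to z \to y$ with $(z \to y) \in \mathcal{R}$. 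To check that $(x \to z) \in {}^\boxslash\mathcal{R}$, given $(a \to b) \in \mathcal{R}$ with $x \le a$ and $z \le b$, I will pull $(a \to b)$ back along $(z \to b)$ to obtain $(a \wedge z \to z) \in \mathcal{R}$, then compose with $(z \to y)$ to see that $a \wedge z$ lies in the defining set above; the minimality of $z$ then forces $z \le a \wedge z$, which provides the required lift $z \le a$.

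The main anticipated obstacle is precisely this factorization step: identifying $z$ as the meet of the above set, and using the interplay between pullback and composition closure of transfer systems to confirm that this meet remains in the set. Once the factorization and lifting arguments are in hand, the remaining clauses (retract closure of both classes, uniqueness of $\mathcal{L}$, and the order isomorphism) are either vacuous in the poset context or follow formally.
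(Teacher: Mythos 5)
Your proof is correct. Note that the paper does not actually prove this proposition --- it is imported wholesale from \cite{fooqw} --- so there is no in-paper argument to compare against; your write-up serves as a self-contained substitute, and it follows essentially the same strategy as the cited source. The key construction, factoring $x\le y$ through $z=\bigwedge\{w \mid x\le w\le y,\ (w\to y)\in\mathcal{R}\}$ and using closure of $\mathcal{R}$ under pullback (meet) and composition to show both that $z$ itself lies in this set and that $x\to z$ has the left lifting property against $\mathcal{R}$, is exactly the right mechanism, and your verification of each step (wideness, composition and pullback-closure of the right class of a weak factorization system; vacuity of retract closure in a skeletal poset; the retract argument forcing $\mathcal{L}={}^\boxslash\mathcal{R}$, which in a poset degenerates to the observation that a two-sided comparison $y\le z\le y$ forces $y=z$) is sound. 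The only point worth making explicit is that your appeal to ``identities lift against every morphism, so they belong to both classes'' tacitly uses $\mathcal{R}=\mathcal{L}^\boxslash$, which is downstream of the retract argument; in the poset setting it is cleaner to just factor $\mathrm{id}_x$ and observe that antisymmetry collapses the factorization. This is cosmetic, not a gap.
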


This result is powerful as not only does it give a simple characterization of the right class, the left class admits an explicit combinatorial description as we now recall.

\begin{lemma}[{\cite[Proposition 4.15]{fooqw}}]
For a transfer system $\mathcal{R}$ on a lattice $\mathcal{P}$, the set $\mathcal{L}$ is given by $\mathcal{E}({R})^c$. Here, $\mathcal{E}({R})$ is the downward extension of $\mathcal{R}$, defined as
\[
\mathcal{E}({R})=\{ z \rightarrow y \,\,|\,\,\mbox{there exists $x \in \mathcal{P}$ such that $z \leq x < y$ and $x \rightarrow y \in \mathcal{R}$ } \}
\]
and $\mathcal{E}(R)^c$ is the complement of $\mathcal{E}(R)$ in $\mathcal{P}$.
\end{lemma}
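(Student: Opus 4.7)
My plan is to work from the identification $\mathcal{L} = {}^\boxslash \mathcal{R}$ provided by Proposition~\ref{fooqwresult} and translate lifting problems into order-theoretic statements peculiar to posets. Since hom-sets in $\mathcal{P}$ have at most one element, a commutative square with verticals $f \colon z \to y$ and $g \colon A \to B$ in $\mathcal{R}$ amounts to the data $z \leq A$ and $y \leq B$, and a lift exists precisely when $y \leq A$. So I would first record the reformulation: $f \in \mathcal{L}$ if and only if for every $A \to B \in \mathcal{R}$ with $z \leq A$ and $y \leq B$ one also has $y \leq A$.

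The direction $\mathcal{E}(R) \subseteq \mathcal{L}^c$ is then immediate. Given a witness $x$ with $z \leq x < y$ and $x \to y \in \mathcal{R}$, I would feed the square with top edge $z \to x$, right edge $x \to y$, and bottom edge $y = y$ into the criterion above; it has no lift because $y \not\leq x$, so $f \notin \mathcal{L}$.

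For the converse $\mathcal{L}^c \subseteq \mathcal{E}(R)$, the key tool is closure of $\mathcal{R}$ under pullbacks. Starting from a witness $g \colon A \to B \in \mathcal{R}$ with $z \leq A$, $y \leq B$, and $y \not\leq A$, I would form the pullback of $g$ along $y \to B$; in the lattice $\mathcal{P}$ this pullback is the meet morphism $A \wedge y \to y$, which therefore lies in $\mathcal{R}$. Setting $x := A \wedge y$ then gives $z \leq x$ (from $z \leq A$ together with $z \leq y$) and $x < y$ (because $y \not\leq A$ forces $A \wedge y \neq y$), so $x$ is exactly the witness needed to place $f$ into $\mathcal{E}(R)$.

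The main obstacle I anticipate is merely keeping the categorical and lattice-theoretic translations aligned: recognising that pullbacks in $\mathcal{P}$ are meets, and that stability under pullback is precisely the structural ingredient which lets us refine an abstract lifting obstruction into a downward-extension witness sitting immediately below $y$. Once that translation is in hand, the rest is bookkeeping on the trivial shape of hom-sets in $\mathcal{P}$.
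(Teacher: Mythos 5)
Your argument is correct and complete. Note that the paper does not prove this lemma at all --- it is quoted verbatim from \cite[Proposition 4.15]{fooqw} --- so you are supplying a proof where the authors only supply a citation. Your two steps are exactly the right mechanism: the translation of the lifting criterion $\mathcal{L}={}^\boxslash\mathcal{R}$ into the order-theoretic condition ``$z\le A$, $y\le B$, $A\to B\in\mathcal{R}$ forces $y\le A$'' is valid because hom-sets in $\mathcal{P}$ are subsingletons, the witness square for $\mathcal{E}(R)\subseteq\mathcal{L}^c$ visibly has no lift since $y\not\le x$, and for the converse the pullback of $A\to B$ along $y\to B$ is indeed the meet morphism $A\wedge y\to y$, which lies in $\mathcal{R}$ by the defining closure property of transfer systems; the inequalities $z\le A\wedge y$ (from $z\le A$ and $z\le y$) and $A\wedge y<y$ (from $y\not\le A$) then produce the required witness. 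The only degenerate case, $f$ an identity, is handled automatically since identities always lift. This is, in essence, the standard argument one would expect in \cite{fooqw}, so nothing is lost or gained structurally --- but as a self-contained verification it is sound.
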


To conclude, in the situation of a finite lattice $\mathcal{P}$ we have the following triple of bijections

\[
\xymatrix@C=-2em{
& \{\text{transfer systems on }\mathcal{P}\ar@{<=>}[dr] \ar@{<=>}[dl]\} \\
\{\text{weak factorization systems on }\mathcal{P}\} \ar@{<=>}[rr]& &
\{\text{contractible model structures on }\mathcal{P}\}.}
\]

\begin{example}
Consider the lattice $[1] := \xymatrix@C=4em{\bullet \ar[r] & \bullet}$. To determine a model structure on this lattice, we need only determine the status of the single non-identity morphism, that is, decide if it is in $\mathsf{W}$, $\mathsf{C}$ or $\mathsf{F}$. By Lemma~\ref{lem:isomorphism} we know that this map can only be in two out of the three distinguished classes of maps (as the map is not an isomorphism). 

The non-trivial morphism has to be either a fibration or cofibration (or both), as otherwise (MC5) would not be satisfied. If it is not a weak equivalence, we are in the trivial model structure, so then it has to be both a fibration and a cofibration at the same time.

By \cite[Example 1.1.5]{hovey} for any category $\mathcal{C}$ admitting all limits and colimits we always have the existence of the following model structures in addition to the trivial one.
\begin{itemize}
    \item $\mathsf{F} = \mathrm{iso}$, $\mathsf{W} = \mathsf{C} = \mathrm{all}$, 
    \item $\mathsf{C} = \mathrm{iso}$, $\mathsf{F} = \mathsf{W} = \mathrm{all}$.
\end{itemize}
And indeed, we have seen that these are all the cases taken care of, so the total number of possible model structures on $[1]$ is 3, where one is trivial and the other two are contractible. We can represent these model structures as follows.
\[
\xymatrix@C=4em@R=0em{
\bullet \ar@{^(->}[r]^{\sim} & \bullet & & \bullet \ar@{^(->>}[r] & \bullet & & \bullet \ar@{->>}[r]^{\sim}& \bullet \\
\ar@{}[r]|{\mathrm{contractible}} & & & \ar@{}[r]|{\mathrm{trivial}}  & & & \ar@{}[r]|{\mathrm{contractible}}  & 
}
\]

\end{example}

\section{Extending contractible submodels}\label{sec:extending}

In this section we will prove a structural result regarding model structures on the lattice $[n]$ which will lead to the desired enumeration result in the next section.  The main insight is that it is enough to define certain contractible submodel structures which then uniquely determine a model structure on $[n]$. We will repeatedly use the fact that in the lattice $[n]$ the pushout of the span
\[
\xymatrix{X &\ar[l] Z \ar[r]& Y}
\]
is computed as $\max(X,Y)$. Dually the pullback of the cospan
\[
\xymatrix{X &\ar@{<-}[l] Z \ar@{<-}[r]& Y}
\]
is computed as $\min(X,Y)$.

\subsection{Contractible submodels}

We begin by determining the possible structure of weak equivalences in a model structure on $[n]$. By the fact that the weak equivalences are decomposable, if the morphism $X \to Y$ is a weak equivalence, then so are the morphisms $X \to Z$ and $Z \to Y$ for all $X \leq Z \leq Y$. As such, the weak equivalence classes determine and are determined by an interval partition of $[n]$ as we will now define.

\begin{definition}
An \emph{interval partition} of $[n]$ is a partition $\mathfrak{p}$ of $[n]$ into intervals of form
\[
[0,a_1] \amalg [a_1+1,a_2] \amalg \cdots \amalg [a_k+1,n]
\]
where we allow $a_i+1 = a_{i+1}$.
\end{definition}

\begin{remark}\label{rmk:partition}
It should be noted that the interval partitions of $[n]$ are naturally in bijection with ordered partitions (or \emph{compositions}) of the integer $n+1$. As such, there are exactly $2^n$ such interval partitions. We have chosen to work with interval partitions as it makes the link to the lattice $[n]$ more apparent.
\end{remark}

The following result is immediate from the decomposability and 2-out-of-3 property for the weak equivalences.

\begin{lemma}
There is a bijection between weak equivalence structures on $[n]$ and interval partitions on $[n]$.
\end{lemma}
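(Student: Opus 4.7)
The plan is to produce the bijection explicitly in both directions and then verify that the constructions are mutually inverse. Writing $\mathsf{W}$ for a class of morphisms arising as the weak equivalences of some model structure on $[n]$, recall from Lemma~\ref{lem:decomp} together with the axioms MC2 that $\mathsf{W}$ is closed under composition, contains all identities, satisfies 2-out-of-3, and is decomposable.

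First, given an interval partition $\mathfrak{p} = [0,a_1] \amalg [a_1+1,a_2] \amalg \cdots \amalg [a_k+1,n]$, I would define $\mathsf{W}_\mathfrak{p}$ to be the class of morphisms $i \to j$ (with $i\le j$) such that $i$ and $j$ lie in a common block of $\mathfrak{p}$. Since the blocks are intervals, it is routine to check that $\mathsf{W}_\mathfrak{p}$ contains the identities, is closed under composition, is decomposable, and satisfies 2-out-of-3.

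Conversely, given such a $\mathsf{W}$, I would define a relation on $[n]$ by declaring $i \sim_\mathsf{W} j$ exactly when $\min(i,j) \to \max(i,j) \in \mathsf{W}$. Reflexivity comes from $\mathrm{id} \in \mathsf{W}$ and symmetry is formal. For transitivity of $i \sim_\mathsf{W} j \sim_\mathsf{W} k$, one uses decomposability to restrict to the interval from $\min(i,j,k)$ to $\max(i,j,k)$ and then applies 2-out-of-3 along the chain of three composable arrows there to conclude that the arrow $\min(i,k)\to\max(i,k)$ lies in $\mathsf{W}$. The equivalence classes are intervals: if $i < k < j$ with $i \sim_\mathsf{W} j$, then $i \to k$ and $k \to j$ lie in $\mathsf{W}$ by decomposability, so $k$ is equivalent to both endpoints. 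Hence $\sim_\mathsf{W}$ yields an interval partition $\mathfrak{p}_\mathsf{W}$.

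Finally, I would verify that $\mathfrak{p} \mapsto \mathsf{W}_\mathfrak{p}$ and $\mathsf{W} \mapsto \mathfrak{p}_\mathsf{W}$ are mutual inverses. Starting from $\mathfrak{p}$, the blocks of $\mathfrak{p}_{\mathsf{W}_\mathfrak{p}}$ are exactly the blocks of $\mathfrak{p}$ by construction. Starting from $\mathsf{W}$, an arrow $i\to j$ lies in $\mathsf{W}_{\mathfrak{p}_\mathsf{W}}$ iff $i$ and $j$ lie in a common block iff $i\sim_\mathsf{W} j$ iff $i\to j\in\mathsf{W}$. I do not expect any serious obstacle: all of the work is concentrated in the interval property of equivalence classes, which is immediate from decomposability.
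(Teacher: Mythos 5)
Your proof is correct and follows exactly the route the paper intends: the paper declares the lemma ``immediate from decomposability and 2-out-of-3,'' and your argument simply fleshes out that sketch by showing the weak equivalence classes are intervals and that the two assignments are mutually inverse. The only point worth flagging is that realizing every interval partition as the weak equivalences of an \emph{actual} model structure is not checked here (by you or by the paper); that surjectivity is supplied later by Theorem~\ref{thm:main}.
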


We are now in a position to define what we mean by a selection of \emph{contractible submodels}.

\begin{definition}
Let $[n]$ be equipped with an interval partition $\mathfrak{p} = [0,a_1] \amalg [a_1+1,a_2] \amalg \cdots \amalg [a_k+1,n]$. Then a choice of \emph{contractible submodels} on $[n]$ (relative to $\pp$) is a choice of a contractible model structure on each interval $[a_i+1,a_{i+1}] \cong [r_i]$ where $r_i=a_{i+1}-a_i-1$ and $i=1,\dots,k+1$.
\end{definition}

A choice of contractible submodels on $[n]$ therefore picks the classes $\mathsf{W}$, $\mathsf{AF}$ and $\mathsf{AC}$ for a unique (potential) model structure on $[n]$ (\emph{cf.}, Remark~\ref{rem:overdetermined}). In the next section we will show that this indeed always defines a model structure. For now we observe that any model structure on $[n]$ gives rise to a choice of contractible submodels by restriction.

\begin{proposition}\label{prop:modelgivessub}
Any model structure on $[n]$ induces a model structure on each weak equivalence class.
\end{proposition}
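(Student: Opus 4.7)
The plan is to show that if $I = [a_i+1, a_{i+1}]$ is one of the intervals in the partition $\pp$ determined by the weak equivalences of a given model structure on $[n]$, then restricting $\mathsf{W}, \mathsf{C}, \mathsf{F}$ to morphisms whose source and target both lie in $I$ yields a (necessarily contractible) model structure on the sub-lattice $I \cong [r_i]$. Since $I$ is a convex subset of the total order $[n]$, it is itself a finite lattice, and the inclusion $I \hookrightarrow [n]$ preserves meets and joins, so MC1 is immediate.

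Closure of the three restricted classes under composition is inherited from $[n]$, so these are genuine classes of morphisms of $I$. For MC2, observe that by the definition of $I$ every morphism in $I$ is a weak equivalence of the ambient model structure, hence the restricted class of weak equivalences equals $\mathrm{Mor}(I)$, for which the two-out-of-three property is trivially satisfied. MC3 is vacuous in any lattice since there are no nontrivial retracts.

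For MC4, a lifting diagram in $I$ with (acyclic) cofibration on the left and (acyclic) fibration on the right is also such a diagram in $[n]$, so a lift $h\colon B\to X$ exists; since $B,X\in I$, this lift is a morphism of $I$. The key step is MC5: given $f\colon X\to Y$ in $I$, choose a factorization $X\xrightarrow{i} Z\xrightarrow{p} Y$ of the desired type in $[n]$. The forthcoming observation at the start of \S\ref{sec:extending} that pushouts and pullbacks in $[n]$ compute maxima and minima, together with the existence of arrows $X\to Z$ and $Z\to Y$, forces $X\le Z\le Y$ in $[n]$. Because $I$ is a convex subinterval of the total order and $X,Y\in I$, we conclude $Z\in I$, so the factorization lives entirely inside $I$. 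Both $i$ and $p$ belong to the restricted classes with the required (acyclicity) decorations, verifying MC5.

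The main (and really only) subtlety is this convexity argument for MC5; once it is in place, every axiom reduces to its counterpart in the ambient model structure. Since the restricted weak equivalences are all of $\mathrm{Mor}(I)$, the induced model structure on $I$ is contractible in the sense of Definition~\ref{def:contractible}, as expected.
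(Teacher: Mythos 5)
Your proof is correct and follows essentially the same route as the paper's: both arguments reduce everything to the ambient model structure on $[n]$, with the only real content being that factorizations stay inside the interval (your convexity observation, which the paper leaves implicit) and that lifts between objects of the interval are morphisms of the interval. The only cosmetic difference is that you verify MC1--MC5 directly while the paper phrases the check in terms of weak factorization systems, noting that since all restricted morphisms are weak equivalences it suffices to verify factorization and lifting.
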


\begin{proof}
Let $\mathfrak{p} = [0,a_1] \amalg [a_1+1,a_2] \amalg \cdots \amalg [a_k+1,n]$ be the interval poset for the weak equivalence structure associated to the model structure on $[n]$. We need to show that the restriction of the model structure to each interval $[a_i+1,a_{i+1}]$ provides a contractible submodel. By construction all maps in this restricted model structure are weak equivalences, as such it suffices to show that the restriction to the interval gives a weak factorization system. The factorization of a morphism $f \colon X \to Y$  in $[a_i+1,a_{i+1}]$ follows from the factorizations in the model structure in $[n]$. Similarly, we note that for a commutative diagram of the form
\[
\begin{gathered}\xymatrix{
A \ar[r] \ar[d]_{i} & X \ar[d]^{p} \\ B \ar[r] & Y
}\end{gathered}    
\]
if both $i$ and $p$ are in the interval $[a_i+1,a_{i+1}]$ then so is the lift $B \to X$. As such, the liftings are also taken care of by the model structure on $[n]$ as required.
\end{proof}

\subsection{The existence result}

\begin{definition}
Let $[n]$ be equipped with a contractible model structure, and $X \in [n]$. We denote by $R^\mathrm{max}(X)$ the necessarily unique maximal object such that $X \to R^\mathrm{max}(X)$ is an acyclic cofibration. Similarly, denote by $Q^\mathrm{min}(X)$ the unique minimal object such that $Q^\mathrm{min}(X) \to X$ is an acyclic fibration.
\end{definition}

The following lemma highlights the choice of notation as being a maximal fibrant (resp., minimal cofibrant) replacement. In particular this gives a functorial choice of fibrant and cofibrant replacement.

\begin{lemma}
For every $X \in [n]$ as above, the object $R^\mathrm{max}(X)$ is fibrant. Dually, the object $Q^\mathrm{min}(X)$ is cofibrant.
\end{lemma}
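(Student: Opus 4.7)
The plan is to prove fibrancy of $R^{\mathrm{max}}(X)$ directly from the factorization axiom combined with maximality, and then invoke the opposite-category dual for $Q^{\mathrm{min}}(X)$. The terminal object of $[n]$ is $n$ itself, so I must show the unique morphism $R^{\mathrm{max}}(X)\to n$ lies in $\mathsf{F}$.

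First, I would apply MC5 to factor the canonical map $R^{\mathrm{max}}(X)\to n$ as
\[
R^{\mathrm{max}}(X) \xrightarrow{\ \sim\ } Z \twoheadrightarrow n,
\]
where the first arrow is an acyclic cofibration and the second is a fibration. The key observation is then that in a contractible model structure every morphism is already a weak equivalence, so $\mathsf{AC}=\mathsf{C}$, and in particular the left class of a weak factorization system is closed under composition. Therefore the composite
\[
X \to R^{\mathrm{max}}(X) \to Z
\]
of two acyclic cofibrations is itself an acyclic cofibration from $X$.

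By the defining maximality of $R^{\mathrm{max}}(X)$, this forces $Z \le R^{\mathrm{max}}(X)$. But the factorization also gives $R^{\mathrm{max}}(X)\le Z$, so $Z = R^{\mathrm{max}}(X)$ and the map $R^{\mathrm{max}}(X)\to n$ coincides with the fibration $Z\twoheadrightarrow n$. Hence $R^{\mathrm{max}}(X)$ is fibrant.

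The second half, that $Q^{\mathrm{min}}(X)$ is cofibrant, is formally dual: factor the unique morphism $0 \to Q^{\mathrm{min}}(X)$ as a cofibration followed by an acyclic fibration, use closure of $\mathsf{AF}=\mathsf{F}$ under composition together with the acyclic fibration $Q^{\mathrm{min}}(X)\to X$, and invoke the minimality of $Q^{\mathrm{min}}(X)$ to conclude that the acyclic fibration in the factorization is the identity. I do not anticipate a real obstacle here; the only thing to be careful about is keeping track that ``acyclic cofibration'' in a contractible model structure is just a synonym for ``cofibration,'' so that compositions stay in the desired class without needing to verify the two-out-of-three property separately.
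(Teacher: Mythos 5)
Your proof is correct, but it takes a different route from the paper's. The paper establishes fibrancy by verifying the right lifting property of $R^{\mathrm{max}}(X)\to\ast$ against an arbitrary acyclic cofibration $A\hookrightarrow B$: assuming no lift exists forces $R^{\mathrm{max}}(X)<B$, and then pushing out the acyclic cofibration $A\to B$ along $A\to R^{\mathrm{max}}(X)$ exhibits $R^{\mathrm{max}}(X)\to B$ as an acyclic cofibration, contradicting maximality. You instead apply MC5 directly to $R^{\mathrm{max}}(X)\to n$ and use maximality to collapse the intermediate object $Z$ onto $R^{\mathrm{max}}(X)$; combined with the poset fact that a morphism $R^{\mathrm{max}}(X)\to Z$ forces $R^{\mathrm{max}}(X)\le Z$, this identifies the given map with the fibration in the factorization. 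Both arguments hinge on maximality, but yours trades the paper's use of closure of acyclic cofibrations under cobase change for the factorization axiom plus closure under composition, which is arguably more elementary (no pushout computation in $[n]$ is needed). Your parenthetical that $\mathsf{AC}=\mathsf{C}$ in a contractible model structure is true but not actually needed: acyclic cofibrations are closed under composition in any model category, since both $\mathsf{C}$ and $\mathsf{W}$ are. The dual argument for $Q^{\mathrm{min}}(X)$ goes through exactly as you sketch.
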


\begin{proof}
We need to show that for any commutative diagram of the form
\[\xymatrix{
A \ar[r] \ar@{^(->}[d]_{\vsim} & R^{\mathrm{max}} (X) \ar[d] \\
B \ar[r] & \ast
}\]
there is a lift $B \to R^\mathrm{max}(X)$. Assume that there is not. Then there is a map $R^\mathrm{max}(X) \to B$, and we can construct a pushout
\[\xymatrix{
A \ar[r] \ar@{^(->}[d]_{\vsim}  & R^{\mathrm{max}} (X) \ar[d] \\
B \ar@{=}[r] & B \pullbackcorner
}\]
As pushouts of acyclic cofibrations are acyclic cofibrations, we have that $R^\mathrm{max}(X) \to B$ is an acyclic cofibration which contradicts the maximality of $R^\mathrm{max}(X)$.

A dual argument using that pullbacks of acyclic fibrations are acyclic fibrations gives a proof that $Q^\mathrm{min}(X)$ is cofibrant.
\end{proof}

\begin{corollary}\label{cor:cofibrep}
For every $X \in [n]$ the object $R^{\mathrm{max}}(X)$ is a functorial choice of fibrant replacement in a contractible model structure. Dually, the object $Q^{\mathrm{min}}(X)$ is a functorial choice of cofibrant replacement in a contractible model structure.\hfill\qedsymbol
\end{corollary}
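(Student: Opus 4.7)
The plan is to split the proof into two assertions: that $R^{\mathrm{max}}(X)$ is a fibrant replacement for each $X$, and that the assignment $X \mapsto R^{\mathrm{max}}(X)$ is functorial (the dual statement for $Q^{\mathrm{min}}$ being entirely analogous). The first point is essentially a bookkeeping observation: by construction the map $X \to R^{\mathrm{max}}(X)$ is an acyclic cofibration, and in a contractible model structure every morphism is a weak equivalence, so combining this with the previous lemma (which establishes fibrancy of $R^{\mathrm{max}}(X)$) provides exactly the data of a fibrant replacement.

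The substantive step is functoriality. Since $[n]$ is a poset, what needs to be verified is that $X \le Y$ implies $R^{\mathrm{max}}(X) \le R^{\mathrm{max}}(Y)$, after which the unique morphism $R^{\mathrm{max}}(X) \to R^{\mathrm{max}}(Y)$ automatically makes the naturality square commute and composition is respected vacuously. To establish the inequality, I would form the pushout of the span $Y \leftarrow X \to R^{\mathrm{max}}(X)$, which in $[n]$ is computed as $\max(Y, R^{\mathrm{max}}(X))$. The induced map $Y \to \max(Y, R^{\mathrm{max}}(X))$ is a pushout of the acyclic cofibration $X \to R^{\mathrm{max}}(X)$, and acyclic cofibrations are stable under pushout (they form the left class of a weak factorization system, as recorded in Remark~\ref{rem:contractibleiswfs} together with Theorem~\ref{thm:equivdef}). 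Therefore $Y \to \max(Y, R^{\mathrm{max}}(X))$ is itself an acyclic cofibration, and the maximality of $R^{\mathrm{max}}(Y)$ forces $\max(Y, R^{\mathrm{max}}(X)) \le R^{\mathrm{max}}(Y)$; in particular $R^{\mathrm{max}}(X) \le R^{\mathrm{max}}(Y)$, as required.

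The dual statement for $Q^{\mathrm{min}}$ proceeds by forming the pullback of the cospan $X \to Y \leftarrow Q^{\mathrm{min}}(Y)$, which is $\min(X, Q^{\mathrm{min}}(Y))$, and invoking the fact that acyclic fibrations are stable under pullback together with the minimality of $Q^{\mathrm{min}}(X)$. I do not anticipate a genuine obstacle here; the only subtlety is confirming that the cofibration/fibration half of the previous lemma's proof (pushouts of acyclic cofibrations are acyclic cofibrations) is already in hand, which it is via the weak factorization system perspective introduced in the preliminaries.
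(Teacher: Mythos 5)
Your argument is correct, and it fills in exactly the details the paper leaves implicit: the corollary is stated with no proof, being regarded as immediate from the preceding lemma, and the functoriality step you supply (pushout of $Y \leftarrow X \to R^{\mathrm{max}}(X)$, stability of acyclic cofibrations under pushout, then maximality of $R^{\mathrm{max}}(Y)$) is precisely the same pushout-plus-maximality device used in the proof of that lemma. Nothing is missing.
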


\begin{remark}
It follows from the above corollary that the object $R^{\mathrm{max}}(Q^{\mathrm{min}}(X)) = Q^{\mathrm{min}}(R^{\mathrm{max}}(X))$ picks out the unique bifibrant object in a contractible model structure on $[n]$.
\end{remark}

We are now equipped to prove the main result of this section.

\begin{theorem}\label{thm:main}
Any selection of contractible submodels on $[n]$ can be uniquely extended to a model structure on $[n]$.
\end{theorem}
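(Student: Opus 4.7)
The plan is to handle uniqueness and existence separately. For uniqueness, any model structure extending the given submodels must induce $\mathfrak{p}$ as its interval partition of weak equivalence classes---otherwise the restrictions produced by Proposition~\ref{prop:modelgivessub} could not agree with the chosen submodels on $\mathfrak{p}$---and its classes $\mathsf{AC}$, $\mathsf{AF}$ must coincide with the acyclic (co)fibrations of the submodels (since acyclic (co)fibrations are weak equivalences, so they live inside a single class of $\mathfrak{p}$). By Remark~\ref{rem:overdetermined}, this forces $\mathsf{C} = {}^{\boxslash}\mathsf{AF}$ and $\mathsf{F} = \mathsf{AC}^{\boxslash}$, so at most one such extension exists.

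For existence, I would write down the candidate model structure explicitly. Declare $\mathsf{W}$ to be the morphisms lying within a single class of $\mathfrak{p}$, and let $\mathsf{AC}$, $\mathsf{AF}$ be the acyclic (co)fibrations of the submodels. A morphism $f \colon X \to Y$ with $X, Y$ in the same class is a cofibration (resp.\ fibration) iff it is one in that submodel. For a cross-class $f \colon X \to Y$ with $X$ in the class $[a_i+1, a_{i+1}]$ and $Y$ in $[a_j+1, a_{j+1}]$ for $i < j$, declare $f$ to be a cofibration iff $Y$ is cofibrant in its submodel, and a fibration iff $X$ is fibrant in its submodel. To invoke Theorem~\ref{thm:equivdef} one checks that $\mathsf{W}$ satisfies 2-out-of-3 (immediate from the interval structure, in fact stronger by decomposability) and that $(\mathsf{C}, \mathsf{AF})$ and $(\mathsf{AC}, \mathsf{F})$ are weak factorization systems. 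Factorizations of within-class morphisms are inherited from the submodels, while a cross-class $f$ is factored as $X \to Q^{\mathrm{min}}(Y) \to Y$ and $X \to R^{\mathrm{max}}(X) \to Y$, where by Corollary~\ref{cor:cofibrep} the intermediate objects are cofibrant, resp.\ fibrant, in their classes, forcing the outer morphisms into $\mathsf{C}$ and $\mathsf{F}$ via the definition above.

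The main obstacle will be verifying the lifting axioms $\mathsf{C}\boxslash\mathsf{AF}$ and $\mathsf{AC}\boxslash\mathsf{F}$, since these require controlling the interaction between cross-class (co)fibrations and same-class acyclic (co)fibrations. I would carry this out by case analysis on the class membership of the four corners of a lifting square, using the fact that every acyclic cofibration and every acyclic fibration is confined to a single class of $\mathfrak{p}$. In most configurations the lift exists for free because the interval bounds already force the upper-right vertex to dominate the lower-left. The delicate case is a cross-class cofibration $A \to B$ (with $B$ cofibrant in its class $[a_j+1,a_{j+1}]$) paired with an acyclic fibration $X \to Y$ living inside that same class: there one transfers the problem to the submodel on $[a_j+1, a_{j+1}]$, using the cofibrancy of $B$ together with the LLP built into the submodel's weak factorization system to produce the required lift $B \to X$. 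The dual case handles cross-class fibrations against acyclic cofibrations, and Theorem~\ref{thm:equivdef} then delivers the model structure.
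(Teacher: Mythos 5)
Your proposal is correct, and its skeleton matches the paper's: both reduce existence to exhibiting two compatible weak factorization systems via Theorem~\ref{thm:equivdef}, both produce the factorizations of a cross-block morphism by inserting $R^{\mathrm{max}}(X)$ (resp.\ $Q^{\mathrm{min}}(Y)$) and invoking its fibrancy (resp.\ cofibrancy) in its block, and both use decomposability to confine every acyclic (co)fibration to a single block of $\mathfrak{p}$; uniqueness via Remark~\ref{rem:overdetermined} is identical. The genuine difference is where the verification lands. The paper sets $\mathsf{F} := \mathsf{AC}^{\boxslash}$ and $\mathsf{C} := {}^{\boxslash}\mathsf{AF}$, so the lifting axiom holds by fiat and the work goes into factorization and the compatibility $\mathsf{AF}\subseteq\mathsf{AC}^{\boxslash}$, both handled by pushout/maximality contradictions. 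You instead describe the cross-block classes explicitly ($X\to Y$ is a cofibration iff $Y$ is cofibrant in its block, a fibration iff $X$ is fibrant in its block), which makes the compatibility immediate but shifts the burden to the lifting axiom; your case analysis there is sound, since failure of a lift forces $A\le X< B\le Y$, decomposability places $B$ (resp.\ $X$) in the block of the acyclic fibration (resp.\ acyclic cofibration), and cofibrancy of $B$ (resp.\ fibrancy of $X$) in that block then supplies the lift from the submodel's weak factorization system. One can check that your explicit classes coincide with the paper's lifting-defined ones, so the two constructions yield the same model structure; your version has the modest bonus of recording a concrete combinatorial description of \emph{all} cofibrations and fibrations, which the paper leaves implicit.
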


\begin{proof}
The selection of contractible submodels gives us a set $\mathsf{AF}$ of acyclic fibrations and $\mathsf{AC}$ of acyclic cofibrations, and by construction and properties of weak equivalences on $[n]$ we have that $\mathsf{W} = \mathsf{AF} \circ \mathsf{AC}$ satisfies 2-out-of-3. As such, it is enough to show that the pairs $(\mathsf{AC}, \mathsf{AC}^{\boxslash})$ and $({}^{\boxslash}\mathsf{AF},\mathsf{AF})$ are compatible weak factorization systems. In particular we need to show for each pair $(\mathcal{L},\mathcal{R})$ that
\begin{enumerate}
	\item every morphism admits a factorization as $f=pi$ for $i \in \mathcal{L}$ and $p \in \mathcal{R}$,
	\item $\mathcal{L} \boxslash \mathcal{R}$,
	\item $\mathcal{L}$ and $\mathcal{R}$ are closed under retracts,
\end{enumerate}
and additionally the compatibility relation that
\begin{enumerate}
	\item[(4)] $\mathsf{AF} \subseteq \mathsf{AC}^{\boxslash}$ (equivalently that $\mathsf{AC} \subseteq {}^{\boxslash}\mathsf{AF}$).
\end{enumerate}
Condition 2 is immediate by construction, as is Condition 3 as we have no non-trivial retracts in a poset. Thus, it remains to prove Conditions 1 and 4.

We begin with Condition 1. Let $f \colon X \to Y$ in $[n]$. If $Y \leq  R^{\mathrm{max}}(X)$ (where $R^{\mathrm{max}(X)}$ is taken in the contractible submodel in which $X$ lives in), then we are working in a contractible submodel, and the factorization exists by assumption. Therefore, assume that $ R^{\mathrm{max}}(X) < Y$. We will factor $f$ as
\[
X \xrightarrow{\mathmakebox[3em]{\in \mathsf{AC}}} R^{\mathrm{max}}(X) \xrightarrow{\mathmakebox[3em]{\in \mathsf{AC}^{\boxslash}}} Y.
\]
We need to show that the map $R^{\mathrm{max}}(X) \to Y$ is in $\mathsf{AC}^\boxslash$, that is, it admits lifts for all commutative diagrams of the form
\[\xymatrix{
A \ar[r] \ar@{^(->}[d]_{\vsim} & R^{\mathrm{max}}(X) \ar[d] \\
B \ar[r] & Y.
}\]
We assume no such lift exists, that is, there is a morphism $ R^{\mathrm{max}}(X) \to B$. In this case we have $A < R^{\mathrm{max}}(X)  < B$ and as such we are within one of our contractible submodel structures; in particular, by the decomposition property of weak equivalences, the maps $A \to R^{\mathrm{max}}(X) $ and $R^{\mathrm{max}}(X)  \to B$ are weak equivalences. As such, we construct a pushout of the following form
\[\xymatrix{
A \ar[r]^-{\sim} \ar@{^(->}[d]_{\vsim}  & R^{\mathrm{max}} (X) \ar[d]^{\vsim} \\
B \ar@{=}[r] & B. \pullbackcorner
}\]
As pushouts of acyclic cofibrations are acyclic cofibrations (again, noting that the above diagram takes place purely within a contractible submodel), we have that $R^\mathrm{max}(X) \to B$ is an acyclic cofibration which contradicts the maximality of $R^\mathrm{max}(X)$. As such, we have constructed the required factorization. A dual argument takes care of the other factorization using $Q^\mathrm{min}(X)$. This concludes the proof of Condition 1.

We now move to proving Condition 4. We shall show that $\mathsf{AF} \subseteq \mathsf{AC}^{\boxslash}$. Let $f \colon X \to Y$ be in $\mathsf{AF}$. We wish to show that for any acyclic cofibraton $A \to B$ we have a lift
\[\xymatrix{
A \ar[r] \ar@{^(->}[d]_{\vsim}& X \ar@{->>}[d]^{\vsim}  \\
B \ar@{..>}[ur] \ar[r]  & Y. 
}\]
Assume such a lift does not exist. This means that $A \leq  X < B$, which implies by decomposition property that $A \simeq X$ and $X \simeq B$. By 2-out-of-3 we have $B \simeq Y$, and as such the entire diagram lives in a contractible submodel. It follows that $B \cong X$ as we know this diagram must admit a lift in the contractible submodel. This means that we have proved Condition 4.


Finally, the uniqueness of the model structure is guaranteed by the fact that any model structure is uniquely determined by its acyclic fibrations and acyclic cofibrations (Remark~\ref{rem:overdetermined}). 
\end{proof}

\begin{remark}
Alternatively, one could use the language of transfer systems as introduced in Definition~\ref{def:transfersystem} to provide a more economical proof of Theorem~\ref{thm:main}. Indeed, the selection of contractible submodels is equivalent to giving a disjoint collection of transfer systems on $[n]$. One can check that the disjoint union of transfer systems is indeed a transfer system, in particular, we obtain a single transfer system on $[n]$. We know that this then gives rise to a weak factorization system by Proposition~\ref{fooqwresult}. A similar argument for the left class gives the other weak factorization system. In particular this covers Conditions 1--3 required in the above proof. We have chosen to present the above proof as it is entirely self contained.
\end{remark}

\begin{corollary}\label{cor:main}
There is a bijection of sets
\[
\{\text{model structures on }[n]\} \cong \{\text{choices of contractible submodels on }[n]\}.
\]
\end{corollary}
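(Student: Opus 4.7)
The plan is to exhibit mutually inverse maps between the two sets using the two preceding results. In one direction, Proposition~\ref{prop:modelgivessub} assigns to each model structure on $[n]$ a selection of contractible submodels by restricting to each weak equivalence class; recall that by decomposability (Lemma~\ref{lem:decomp}) and 2-out-of-3, the weak equivalence classes of a model structure on $[n]$ form precisely an interval partition $\pp$, and the restriction to each interval $[a_i+1,a_{i+1}]$ is a contractible model structure. In the other direction, Theorem~\ref{thm:main} assigns to each selection of contractible submodels a model structure on $[n]$ extending it.

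To check these are mutually inverse, I would verify each composite is the identity. Starting from a selection of contractible submodels, extending to a model structure via Theorem~\ref{thm:main}, and restricting back, we recover the original selection: by construction of the extension the classes $\mathsf{W}$, $\mathsf{AC}$, $\mathsf{AF}$ restricted to each interval of $\pp$ agree with the given contractible submodel, and this determines the contractible model structure on the interval (via Remark~\ref{rem:overdetermined} applied to the submodel). Conversely, starting from a model structure $M$ on $[n]$, passing to its contractible submodels, and extending, we recover $M$: the extension yields a model structure on $[n]$ with the same classes $\mathsf{W}$, $\mathsf{AC}$, $\mathsf{AF}$ as $M$ (since every acyclic (co)fibration of $M$ lies in a single weak equivalence class and hence in a single contractible submodel), and any two model structures sharing these three classes coincide by Remark~\ref{rem:overdetermined}.

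There is no real obstacle here; the corollary is essentially a bookkeeping consequence of Theorem~\ref{thm:main} combined with Proposition~\ref{prop:modelgivessub}. The only point that warrants care is confirming that the three classes $\mathsf{W}$, $\mathsf{AC}$, $\mathsf{AF}$ of a model structure on $[n]$ are recovered from its submodel restrictions, which follows immediately from the interval partition description of weak equivalence classes together with the fact that acyclic (co)fibrations are in particular weak equivalences.
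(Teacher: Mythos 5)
Your proof is correct and takes essentially the same route as the paper, which simply cites Theorem~\ref{thm:main} for one direction and Proposition~\ref{prop:modelgivessub} for the other; your additional verification that the two assignments are mutually inverse (via Remark~\ref{rem:overdetermined} and the uniqueness clause of Theorem~\ref{thm:main}) is a careful spelling-out of what the paper leaves implicit.
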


\begin{proof}
The $\subseteq$ direction is given by Theorem~\ref{thm:main} while the $\supseteq$ direction is Proposition~\ref{prop:modelgivessub}.
\end{proof}

\begin{corollary}\label{cor:transferaf}
Every model structure on $[n]$ determines and is determined by an interval partition and a choice of transfer system on each connected component.\hfill\qedsymbol
\end{corollary}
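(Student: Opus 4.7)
The plan is to compose the bijections already established in the preceding material. By Corollary~\ref{cor:main}, model structures on $[n]$ are in bijection with choices of contractible submodels on $[n]$. Unpacking the definition of a choice of contractible submodels, such a datum consists of an interval partition $\mathfrak{p} = [0,a_1] \amalg [a_1+1,a_2] \amalg \cdots \amalg [a_k+1,n]$ of $[n]$, together with a contractible model structure on each interval $[a_i+1,a_{i+1}] \cong [r_i]$. The intervals in $\mathfrak{p}$ are precisely the weak equivalence classes of the induced model structure, and hence are the connected components of the subcategory of weak equivalences, so the first piece of data is exactly an interval partition of $[n]$.

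For the second piece of data, I would apply Remark~\ref{rem:contractibleiswfs}, which gives a bijection between contractible model structures on a category and weak factorization systems on it. Applied to each interval, this replaces each contractible submodel with a weak factorization system on $[r_i]$. Since each $[r_i]$ is a finite lattice, Proposition~\ref{fooqwresult} then further identifies weak factorization systems on $[r_i]$ with transfer systems on $[r_i]$.

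Composing these three bijections yields the required correspondence: a model structure on $[n]$ determines and is determined by an interval partition of $[n]$ together with a transfer system on each of its connected components. No step requires additional argument beyond quoting the prior results; this corollary is essentially a direct read-off of the chain of equivalences. The only mild subtlety to verify is that the decomposition of the interval partition into its blocks matches the notion of connected component of the weak equivalence subcategory, but this is immediate since, as noted just before Theorem~\ref{thm:main}, the decomposability and 2-out-of-3 properties force the weak equivalences to coincide with all morphisms internal to the blocks of $\mathfrak{p}$.
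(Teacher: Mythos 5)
Your proposal is correct and matches the paper's intent exactly: the corollary is stated with no written proof precisely because it is the composite of Corollary~\ref{cor:main}, Remark~\ref{rem:contractibleiswfs}, and Proposition~\ref{fooqwresult}, which is the chain of bijections you spell out. Your extra remark identifying the blocks of the interval partition with the weak equivalence classes is the same observation the paper makes just before Theorem~\ref{thm:main}.
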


\begin{remark}
Immediately from Corollary~\ref{cor:main} we can obtain results regarding the homotopy category of any model structure on $[n]$. Suppose $[n]$ is equipped with a model structure with associated interval partition $\mathfrak{p} = [0,a_1] \amalg [a_1+1,a_2] \amalg \cdots \amalg [a_k+1,n]$. Then we claim that $\operatorname{Ho}([n]) \cong [k]$ where the $k$ here corresponds to the $k$ appearing in the final term of the interval partition. Indeed, we know that in a poset each weak equivalence class has a unique bifibrant object, and the interval partition tells us that we have $k$ weak equivalence classes. The result then follows from Lemma~\ref{lem:homotopycategoryposet}.
\end{remark}

\begin{remark}
The proof of Theorem~\ref{thm:main} critically hinges on the shape of the category $[n]$, and the proof cannot be generalised to other lattices, which we now give an explicit example of. Consider the lattice $[1] \times [1]$ and fix a contractible submodel as in Figure~\ref{problematic1x1}. That is, we choose a contractible model structure on the subcategory consisting of the top objects and the morphism between them.

\begin{figure}[h]
\centering
\begin{tikzpicture}
\node (pol) [draw=none, minimum size=2.25cm, regular polygon, regular polygon sides=4] at (0,0) {};
\foreach \n [count=\nu from 0, remember=\n as \lastn, evaluate={\nu+\lastn}] in {1,2,...,4}
\node[anchor=\n*(360/4)-180] at (pol.corner \n){};
\foreach \n in {1,2,...,4}
\draw[fill = black] (pol.corner \n) circle (2pt);
\draw[right hook->, shorten >=3mm, shorten <=3mm](pol.corner 2) -- (pol.corner 1) node[midway,sloped,above]{$\sim$};
\draw[->, shorten >=3mm, shorten <=3mm](pol.corner 2) -- (pol.corner 3);
\draw[->, shorten >=3mm, shorten <=3mm](pol.corner 1) -- (pol.corner 4);
\draw[->, shorten >=3mm, shorten <=3mm](pol.corner 3) -- (pol.corner 4);
\draw[->, shorten >=3mm, shorten <=3mm](pol.corner 2) -- (pol.corner 4);
\end{tikzpicture}
\caption{A selection of contractible submodels on the poset $[1] \times [1]$.}\label{problematic1x1}
\end{figure}
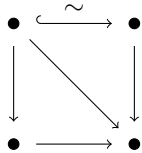

Then we cannot extend this selection of contractible submodels to a model structure. For this to be a model category the bottom morphism must be a fibration for the factorization axiom to hold. Indeed, the bottom morphism must factor as a fibration followed by an acyclic cofibration, and this can only happen when the morphism is a fibration (as all weak equivalences have been selected). However, it cannot be a fibration as it does not admit a lift against the acyclic cofibration due to the shape of the poset.
\end{remark}

\subsection{Properties of model structures on $[n]$}\label{sec:properties}

We finish this section by proving some properties that an arbitrary model structure on $[n]$ possesses. Although none of these properties will be overly surprising to experts given the simple nature of the category $[n]$, it is nonetheless instructive to exhibit their proofs. As a consequence of what we prove in this section, it will follow that one can always take left and right Bousfield localizations. We will revisit this in the final section of the paper.

Recall that a model structure is called \emph{left proper} if the weak equivalences are preserved by pushouts along cofibrations. Dually, it is called \emph{right proper} if the weak equivalences are preserved by pullbacks along fibrations. A model structure which is both left and right proper is said to be \emph{proper}.

\begin{lemma}
Let $[n]$ be equipped with a model structure. Then the weak equivalences of $[n]$ are stable under pushouts and pullbacks along arbitrary maps. As such every model structure on $[n]$ is proper.
\end{lemma}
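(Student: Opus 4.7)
The plan is to exploit the very restrictive shape of pushouts and pullbacks in $[n]$, which were recorded at the start of Section~\ref{sec:extending}: a pushout over a span $X \leftarrow Z \rightarrow Y$ in $[n]$ is computed as $\max(X,Y)$ and the dual pullback as $\min(X,Y)$. Combined with the decomposability of weak equivalences (Lemma~\ref{lem:decomp}), this reduces the whole statement to a two-line case analysis. Since every fibration is a map and every cofibration is a map, stability under pushouts/pullbacks along \emph{arbitrary} maps automatically implies left and right properness.

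Concretely, suppose $f\colon A\to B$ is a weak equivalence and $A\to C$ is any map, so that $A\le B$ and $A\le C$. The pushout of $A\to B$ along $A\to C$ is the unique map $C\to\max(B,C)$. I would split into two cases. If $B\le C$, then $\max(B,C)=C$ and the new map is $\mathrm{id}_C$, which is a weak equivalence. If instead $C<B$, then $\max(B,C)=B$ and the new map is $C\to B$; since $A\le C\le B$ and $A\to B$ is a weak equivalence, decomposability (Lemma~\ref{lem:decomp}) forces $C\to B$ to be a weak equivalence as well.

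The pullback statement is proved by the dual argument: the pullback of $f\colon A\to B$ (now a weak equivalence with $A\le B$) along an arbitrary $C\to B$ is the map $\min(A,C)\to C$. If $A\le C$, the map is $\mathrm{id}_C$; if $C<A$, the map is $C\to A$, and decomposability applied to $A\to B$ combined with $C\le A\le B$... wait, I should set this up more carefully: for pullback, the cospan is $A\to B\leftarrow C$ with $A\le B$ and $C\le B$. The pullback is $\min(A,C)\to C$ (the new map) together with $\min(A,C)\to A$. If $A\le C$ then $\min(A,C)=A$ and the new map is $A\to C$; we have $A\le C\le B$ with $A\to B$ a weak equivalence, so by decomposability $A\to C$ is a weak equivalence. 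If $C<A$, then $\min(A,C)=C$ and the new map is $\mathrm{id}_C$. Either way the pulled-back map is a weak equivalence.

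There is no real obstacle to overcome; the only thing to be careful about is getting the roles of source and target straight in the pullback case, since the pulled-back \emph{map} is the one ending at $C$, not the one parallel to $f$. Once both cases are written out, left and right properness follow as immediate corollaries, as cofibrations and fibrations are in particular morphisms in $[n]$.
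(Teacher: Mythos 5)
Your proof is correct and follows essentially the same route as the paper's: compute the pushout as a $\max$ (dually, the pullback as a $\min$), split into the two cases according to which object is larger, and invoke decomposability of weak equivalences (Lemma~\ref{lem:decomp}) in the nontrivial case. The only difference is that you write out the pullback case explicitly where the paper appeals to duality, and you correctly identify the pulled-back map as the one landing in $C$.
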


\begin{proof}
We shall prove stability under pushouts, with the statement regarding pullbacks following from a completely dual argument. We have a pushout diagram
\[\xymatrix{
X \ar[d]_{\vsim} \ar[r] & \ar[d] A \\ Y \ar[r] & B.
}\]
From the construction of the pushout in $[n]$ we have that $B = \max(A,Y)$. If $Y\leq A$ then our diagram is
\[\xymatrix{
X \ar[d]_{\vsim} \ar[r] & \ar@{=}[d] A \\ Y \ar[r] & A
}\]
where we use the fact that an isomorphism is in particular a weak equivalence. In the other case that $A < Y$ our diagram is
\[\xymatrix{
X \ar[d]_{\vsim} \ar[r] & \ar[d] A \\ Y \ar@{=}[r] & Y.
}\]
By the decomposition property of weak equivalences in $[n]$ it follows that in fact all morphisms in this diagram are weak equivalences.
\end{proof}


Another important property of model categories that will be of use to us is \emph{cofibrant generation}. This boils down to asking that there is a suitable set of maps $I$ and $J$ that generate the cofibrations and acyclic cofibrations under suitable constructions. When the underlying category of the model structure is moreover locally presentable, we say that the model structure is \emph{combinatorial}. The following result is clear from the definition.

\begin{lemma}
Every model structure on $[n]$ is combinatorial.
\end{lemma}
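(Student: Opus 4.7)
The plan is to verify the two conditions defining a combinatorial model structure: the underlying category $[n]$ must be locally presentable, and the model structure must be cofibrantly generated. Both conditions reduce to essentially trivial verifications once we exploit the finiteness of $[n]$.

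First I would handle local presentability. The category $[n]$ has $n+1$ objects and each hom-set has at most one element, and all (co)limits in $[n]$ are computed by $\min$ and $\max$, so they terminate after finitely many steps. In particular, every representable $\operatorname{Hom}_{[n]}(k,-)$ commutes with filtered colimits, so every object is $\omega$-presentable; hence $[n]$ is locally finitely presentable.

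For cofibrant generation, I would take the generating set $I$ to be the entire (finite) set $\mathsf{C}$ of cofibrations and the generating set $J$ to be the entire (finite) set $\mathsf{AC}$ of acyclic cofibrations. By Remark~\ref{rem:overdetermined} one has $I^\boxslash = \mathsf{C}^\boxslash = \mathsf{AF}$ and $J^\boxslash = \mathsf{AC}^\boxslash = \mathsf{F}$, which is the required identification of the acyclic fibrations and fibrations by lifting. The smallness hypothesis on the domains of $I$ and $J$ relative to transfinite compositions is automatic because every object of a finite category is $\omega$-small. Finally, since $[n]$ has no nontrivial retracts and transfinite pushout constructions in $[n]$ stabilize immediately, the characterization of $\mathsf{C}$ as retracts of $I$-cell complexes collapses to the tautology $\mathsf{C} = \mathsf{C}$, and similarly for $\mathsf{AC}$ and $J$.

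No genuine obstacle arises; the main content is merely unpacking the definition of combinatoriality in a finite setting, which is why the authors remark that the result is clear. If pressed to name a subtlety, it would only be that one must check that the axioms of a cofibrantly generated model category (in any of the standard formulations, e.g.\ those of Hovey or Dwyer--Hirschhorn--Kan--Smith) ask only for a \emph{set} of generators together with smallness, both of which are trivially supplied by the finiteness of $[n]$.
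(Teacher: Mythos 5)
Your proof is correct and simply fills in the routine verification that the paper leaves implicit (the paper's entire ``proof'' is the assertion that the result is clear from the definition): $[n]$ is locally finitely presentable because it is a finite complete lattice, and taking $I=\mathsf{C}$ and $J=\mathsf{AC}$ as generating sets works since all objects of a finite poset are $\omega$-small and the lifting characterizations $I^{\boxslash}=\mathsf{AF}$, $J^{\boxslash}=\mathsf{F}$ already hold by the model category axioms. No issues.
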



As $[n]$ is a category with all products and coproducts, it follows that it admits two monoidal structures:
\begin{itemize}
    \item the cartesian monoidal structure with $X \otimes Y = \min(X,Y)$ and unit $n$;
    \item the cocartesian monoidal structure with $X \otimes Y = \max(X,Y)$ and unit $0$.
\end{itemize}
    
Recall that a model structure on a monoidal category $(\mathcal{C},\otimes, \mathbb{I})$ is \emph{monoidal} if the following hold.
\begin{enumerate}
    \item (Pushout-product axiom) For every pair of cofibrations $f \colon X \to Y$  and $f' \colon X' \to Y'$, their \emph{pushout-product} 
\[
f \square f' \colon (X \otimes Y') \coprod_{X \otimes X'} (Y \otimes X') \to Y \otimes Y'
\]
is also a cofibration. Moreover, it is an acyclic cofibration if either $f$ or $f'$ are.
    \item (Unit axiom) For every cofibrant object $X$ and every cofibrant resolution $\mathbb{I}^{\mathsf{c}} \to \mathbb{I}$ of the monoidal unit, the induced morphism $\mathbb{I}^{\mathsf{c}} \otimes X \to \mathbb{I} \otimes X$ is a weak equivalence.
\end{enumerate}

We will now show that all model structures on the poset $[n]$ interact well with both of its natural monoidal structures.

\begin{lemma}
Let $[n]$ be equipped with a model structure. Then $[n]$ is a monoidal model category with respect to the cartesian monoidal structure. Similarly, $[n]$ is a monoidal model category with respect to the cocartesian monoidal structure.
\end{lemma}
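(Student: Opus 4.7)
The plan is to verify the pushout-product axiom and the unit axiom for each of the two monoidal structures. The cocartesian case I expect to be essentially formal: since colimits in $[n]$ are computed by $\max$, the pushout defining $f \square f'$ reduces to $\max(X, Y', Y, X') = \max(Y, Y')$, which already equals the codomain $Y \otimes Y' = \max(Y, Y')$. Hence $f \square f'$ is the identity morphism on $\max(Y,Y')$, which lies in all three distinguished classes by Lemma~\ref{lem:isomorphism}, and so trivially satisfies the pushout-product axiom. The unit axiom is equally immediate: since $0$ is the minimum object, any acyclic fibration $0^{\mathsf{c}} \to 0$ must be the identity, so $0^{\mathsf{c}} \otimes X \to 0 \otimes X$ is a weak equivalence on the nose.

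The cartesian case is more substantive. I would first compute $f \square f'$ explicitly. Applying distributivity in the lattice $[n]$, the pushout $\min(X, Y') \coprod_{\min(X, X')} \min(Y, X')$ simplifies to $\min(\min(Y, Y'), \max(X, X'))$, so $f \square f'$ is the unique morphism $\min(\min(Y, Y'), \max(X, X')) \to \min(Y, Y')$. By symmetry of the pushout-product I may assume $Y \le Y'$, which reduces the map to $\min(Y, \max(X, X')) \to Y$. A case analysis on the position of $\max(X, X')$ yields three possibilities: the pushout-product equals $f$ itself (when $X' \le X$); it equals the map $X' \to Y$, which is the pushout of $f$ along $X \to X'$ (when $X \le X' \le Y$); or it is the identity on $Y$ (when $Y < X'$). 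In every case it is a cofibration, since cofibrations are closed under pushouts. For the acyclic statement, decomposability of weak equivalences (Lemma~\ref{lem:decomp}) immediately propagates $X \simeq Y$ or $X' \simeq Y'$ along the chain $X \le X' \le Y \le Y'$ to conclude that the relevant morphism is a weak equivalence in each subcase.

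For the unit axiom on the cartesian structure, any cofibrant replacement $n^{\mathsf{c}} \to n$ of the unit is an acyclic fibration with $n^{\mathsf{c}} \simeq n$, and for cofibrant $X$ the comparison $\min(n^{\mathsf{c}}, X) \to \min(n, X) = X$ is either the identity (when $n^{\mathsf{c}} \ge X$) or the morphism $n^{\mathsf{c}} \to X$ with $n^{\mathsf{c}} \le X \le n$; the latter is a weak equivalence again by decomposability. The main obstacle is simply keeping the cartesian pushout-product case analysis organized: no individual step is deep, but one has to first spot the distributive simplification of the pushout and then carefully bookkeep which of $f$, a pushout of $f$, or the identity the comparison morphism becomes in each configuration of $X,X',Y,Y'$.
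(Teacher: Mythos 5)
Your proof is correct and follows essentially the same route as the paper's: a case analysis on the relative positions of $X,X',Y,Y'$ identifying the cartesian pushout-product as $f$, a pushout of $f$, or an identity, with acyclicity handled by decomposability, and the unit axiom by decomposability applied to $n^{\mathsf{c}}\le X\le n$. Your distributivity formula $\min(\min(Y,Y'),\max(X,X'))$ and the explicit observation that the cocartesian pushout-product is always an identity are nice organizational touches (the paper only lists "representative" cases and omits the cocartesian verification), but the underlying argument is the same.
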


\begin{proof}
The pushout-product axiom takes the form
\[
f \square f' \colon  \max(\min(X,Y'),\min(Y,X')) \to \min(Y,Y').
\]
There are several cases to be checked, but representative are the cases of
\begin{align*}
f \square f' &= f \colon X \to Y,\\
f \square f' &= g \colon X' \to Y,\\
f \square f' &= \mathrm{id}_Y \colon Y \to Y.
\end{align*}
The first case occurs when we have morphisms $X' \to X \to Y \to Y'$. By hypothesis $f$ is a cofibration. If $f'$ is an acyclic cofibration, then by the decomposition property, $f$ is also acyclic as required.

The second scenario occurs when we have morphisms $X \to X' \to Y \to Y'$. The map $g$ is a cofibration whenever $f$ is, as it is possible to construct a lift as follows
\[\xymatrix{
X \ar[d] \ar@/_1pc/[dd]_{f} \ar[dr]  \\ 
X' \ar[r] \ar[d] & A \ar[d] \\
Y \ar[r] \ar@{..>}[ur] & B,
}\]
using that the lift exists for $f \colon X \to Y$. Again this morphisms is moveover acyclic when either $f$ or $f'$ are by using the decomposition property. 

The third case happens when we have morphisms $X \to Y \to X' \to Y'$. Clearly this is always an acyclic cofibration as it is an isomorphism.

For the unit axiom, we note that it is enough to check for the cofibrant replacement $Q^{\mathrm{min}}(n)$. If $\min(Q^{\mathrm{min}}(n),X) = X$ then we are done. If $\min(Q^{\mathrm{min}}(n),X) = Q^{\mathrm{min}}(n)$ then there are morphisms $$Q^{\mathrm{min}}(n) \to X \to n$$ where the composite is a weak equivalence. By the decomposition property we see that $Q^{\mathrm{min}}(n) \to X $ is a weak equivalence as required.

The cocartesian monoidal structure is proved in a similar fashion, so we exclude the proof.
\end{proof}



\section{Homotopical combinatorics of \texorpdfstring{$[n]$}{[n]}}\label{sec:enum}

We now undertake the enumeration of model structures on $[n]$ and produce full information regarding the homotopical combinatorics of these lattices. To do so, we first recall \emph{transfer systems} and their relation to contractible model structures in \ref{sec:enumcont}. In~\ref{sec:exampleof2}, we work out the example of $[2]$, which motivates the techniques of~\ref{subsec:enum} where our main theorems are proved. We place these results in the context of premodel structures in~\ref{subsec:premodel}.

\subsection{Transfer systems}\label{sec:enumcont}

In the previous section we proved that to understand model structures on $[n]$ it is enough to consider collections of contractible submodels. Therefore, to proceed with a classification of model structures on $[n]$ it is enough to have a classification of contractible model structures on $[k]$ for all $0\leq k \leq n$, which is equivalent to classifying all weak factorization systems on $[k]$. In light of Proposition~\ref{fooqwresult} this is in turn equivalent to classifying all transfer systems on $[n]$.

This latter classification was achieved in~\cite{bbr}. The starting point is the study of $N_\infty$-operads for finite groups $G$ in the sense of Blumberg--Hill~\cite{blumberghill}. These operads capture varying classes of multiplicative norm maps supported by equivariant ring spectra, and encode levels of homotopy commutativity in the equivariant setting.  It was proved in~\cite{bbr} and independently in~\cite{rubin} that these objects of interest are equivalent to the combinatorial data of \emph{$G$-transfer systems} as we now define.

\begin{definition}
Let $G$ be a finite group and let $\operatorname{Sub}(G)$ denote its lattice of subgroups. Then a \emph{$G$-transfer system} is a relation $\mathcal{R}$ on $\operatorname{Sub}(G)$ that refines inclusion and satisfies the following.
\begin{enumerate}
\item $H \, \mathcal{R} \, H$ for all $H \leq G$,
\item $K \, \mathcal{R} \, H$ and $L \, \mathcal{R} \, K$ implies $L \, \mathcal{R} \, H$,
\item $K \, \mathcal{R} \, H$ implies that $(gKg^{-1}) \, \mathcal{R} \, (gHg^{-1})$ for all $g \in G$,
\item $K \, \mathcal{R} \, H$ and $M \leq H$ implies $(K \cap M ) \, \mathcal{R}\,  M$.
\end{enumerate}
\end{definition}

\begin{example}
There are 5 $C_{p^2}$-transfer systems. These can be found in~\cite[Example 14]{bbr}, and we will recall them in the next section in Table~\ref{wfs2}.
\end{example}

One then observes that a $G$-transfer system for $G$ Abelian is equivalent to a transfer system in the sense of Definition~\ref{def:transfersystem} on $\operatorname{Sub}(G)$. In the case that $G=C_{p^n}$, the lattice of subgroups is exactly $[n]$, our lattice of interest. As such, a classification of $C_{p^n}$-$N_\infty$-operads leads directly to a classification of transfer systems, whence contractible model structures, on $[n]$. 

\begin{proposition}[\cite{bbr}]\label{prop:bbr}
Let $G = C_{p^n}$. Then there are $\mathsf{Cat}(n+1)$ many $G$-transfer systems, where $\mathsf{Cat}(n+1)$ is the $(n+1)$-th Catalan number.
\end{proposition}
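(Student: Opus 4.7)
The plan is to reduce the statement to a combinatorial recursion on the number of transfer systems on $[n]$ and to verify that it matches the Catalan recurrence. Since $C_{p^n}$ is abelian, the conjugation axiom in the definition of a $G$-transfer system is vacuous, so a $C_{p^n}$-transfer system is precisely a transfer system on the poset $\operatorname{Sub}(C_{p^n}) \cong [n]$ in the sense of Definition~\ref{def:transfersystem}. My first step is to give an elementary bookkeeping description: for each $i \in [n]$, record $m_i := \max\{\,j \mid (i \to j) \in \mathcal{R}\,\}$. Using that pullbacks in $[n]$ are given by $\min$, pullback-closure combined with composition-closure is equivalent to the pair of conditions
\[
m_i \ge i \quad \text{and} \quad i \le j \le m_i \Longrightarrow m_j \le m_i,
\]
and any sequence $(m_0,\dots,m_n)$ satisfying these conditions uniquely determines a transfer system.

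Next, I would split on the value $k := m_0$. The condition $m_j \le k$ for $j \le k$ prevents any interaction between the indices $\{0,\dots,k\}$ and $\{k+1,\dots,n\}$, so a transfer system on $[n]$ with $m_0 = k$ decomposes canonically as a pair consisting of a transfer system on $[k]$ with $m_0 = k$ together with an independent transfer system on $\{k+1,\dots,n\} \cong [n-k-1]$. Moreover, specifying $m_0 = k$ in a transfer system on $[k]$ amounts to the same data as an arbitrary transfer system on $\{1,\dots,k\} \cong [k-1]$, since the conditions on $m_1,\dots,m_k$ are unaffected and the edges out of $0$ are all forced.

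Writing $T_n$ for the number of transfer systems on $[n]$ and setting $T_{-1} := 1$, the previous paragraph yields the recurrence
\[
T_n \;=\; \sum_{k=0}^{n} T_{k-1} \cdot T_{n-k-1}.
\]
The substitution $U_n := T_{n-1}$ turns this into the standard Catalan convolution $U_{n+1} = \sum_{j=0}^{n} U_j U_{n-j}$ with $U_0 = 1$, so $T_n = \mathsf{Cat}(n+1)$. The base cases $T_0 = 1$ and $T_1 = 2$ match $\mathsf{Cat}(1)$ and $\mathsf{Cat}(2)$, pinning down the induction. The only delicate point in the whole argument is the translation in the first step: one must verify carefully that the pair of conditions on $(m_i)$ captures both the composition-closure and the pullback-closure axioms, which is a short case-analysis on whether the ``new target'' lies above or below $i$ in the pullback square.
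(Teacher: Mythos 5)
Your proof is correct, and it is self-contained where the paper simply defers to \cite{bbr}. Your encoding of a transfer system by the sequence $(m_0,\dots,m_n)$ of maximal targets is valid: pullback closure along $k\to j$ with $i\le k\le j$ forces the target set of $i$ to be the interval $[i,m_i]$ (the case $k<i$ being vacuous), and composition closure is then exactly your condition $i\le j\le m_i\Rightarrow m_j\le m_i$; conversely any such sequence reconstructs a transfer system. Your block decomposition at $k=m_0$ is also sound — the constraint $m_j\le k$ for $j\le k$ kills all cross-block interaction, and fixing $m_0=k$ inside $[k]$ is a vacuous constraint on $(m_1,\dots,m_k)$ — so the convolution $T_n=\sum_k T_{k-1}T_{n-k-1}$ and the Catalan identification follow. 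The route differs from the source in an interesting way: \cite{bbr} (as summarized in Remark~\ref{rem:odotoperation} and Proposition~\ref{prop:build}) obtains the same convolution from the unique decomposition $Z=X\odot Y$, whose pivot is governed by arrows \emph{into the top element} $n$, whereas you split according to the maximal arrow \emph{out of the bottom element} $0$. Since transfer systems are not self-dual under order reversal, these are genuinely different decompositions that happen to produce the same recurrence. What your version buys is an elementary, fully explicit proof from the axioms, and your $(m_i)$-sequences are essentially the Dyck-path coordinates that the paper later invokes in Remark~\ref{rmk:bijection}; what the $\odot$-approach buys is the extra structural operation on $\operatorname{Tr}([n])$ that the paper reuses in Section~\ref{sec:bousfield} to model Bousfield localizations.
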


Combining Proposition~\ref{prop:bbr} and Proposition~\ref{fooqwresult} we arrive at the following classification result for contractible model structures on $[n]$ as required.

\begin{corollary}\label{transferaf}
There are $\mathsf{Cat}(n+1)$ many contractible model structures on $[n]$. The edges of the transfer system correspond to the acyclic fibrations of the model structure.\hfill\qedsymbol
\end{corollary}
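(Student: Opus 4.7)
The plan is to chain together the bijections already set up in the paper. First I would invoke Remark~\ref{rem:contractibleiswfs}, which gives a bijection between contractible model structures on a category $\mathcal{C}$ and weak factorization systems on $\mathcal{C}$, with the fibration class corresponding to the right class $\mathcal{R}$. Next I would apply Proposition~\ref{fooqwresult} to the finite lattice $[n]$, yielding a bijection between weak factorization systems $(\mathcal{L},\mathcal{R})$ on $[n]$ and transfer systems on $[n]$, with the transfer system itself being the right class $\mathcal{R}$. Composing these two bijections shows that contractible model structures on $[n]$ biject with transfer systems on $[n]$, with the underlying set of each transfer system corresponding to the fibration class.

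To extract the enumeration, I would then observe that $[n]$ is (isomorphic as a lattice to) $\operatorname{Sub}(C_{p^n})$, so transfer systems on $[n]$ in the sense of Definition~\ref{def:transfersystem} coincide with $C_{p^n}$-transfer systems (the conjugation condition being vacuous since $C_{p^n}$ is abelian). Invoking Proposition~\ref{prop:bbr} then gives the count $\mathsf{Cat}(n+1)$. For the second sentence of the corollary, I would note that in any contractible model structure we have $\mathsf{W} = \mathrm{all}$, so $\mathsf{AF} = \mathsf{W} \cap \mathsf{F} = \mathsf{F}$; under the bijection above this identifies $\mathsf{AF}$ precisely with the edges of the transfer system, as claimed.

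Since every ingredient is already in place, there is essentially no obstacle, only bookkeeping: the mild point worth being careful about is that Definition~\ref{def:transfersystem} (transfer systems on an abstract lattice) and the group-theoretic Definition used in Proposition~\ref{prop:bbr} agree for $\operatorname{Sub}(C_{p^n})$, which I would briefly note and then conclude.
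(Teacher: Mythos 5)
Your proposal is correct and follows exactly the paper's route: the corollary is obtained by chaining Remark~\ref{rem:contractibleiswfs}, Proposition~\ref{fooqwresult}, the identification of $[n]$ with $\operatorname{Sub}(C_{p^n})$, and Proposition~\ref{prop:bbr}, with the observation that $\mathsf{AF}=\mathsf{F}$ when all maps are weak equivalences. Nothing to add.
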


\begin{remark}\label{rem:odotoperation}
At this point it is worth discussing the following method, as features of it will appear in Section~\ref{sec:bousfield} when we discuss Bousfield localizations, and is the key input in the proof of Proposition~\ref{prop:bbr}. We will denote by $\operatorname{Tr}([n])$ the collection of all transfer systems on $[n]$. A key ingredient to studying transfer systems is an operation
\[
\odot \colon \operatorname{Tr}([i]) \times \operatorname{Tr}([j]) \to \operatorname{Tr}([i+j+2]).
\]
By taking the outlook of a transfer systems as a certain subgraph of $[n]$, we can pictorially represent this operation as in Figure~\ref{dotdia}.

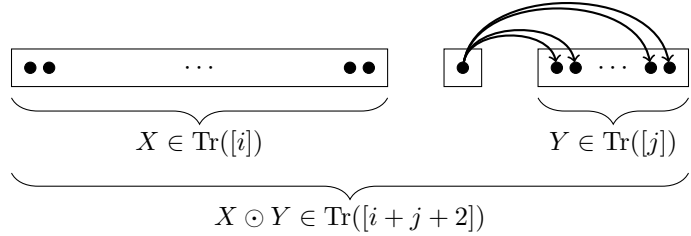
\begin{figure}[h!]
\centering
\begin{tikzpicture}[scale = 1]
\draw [black] (0,0) rectangle (5,0.5);
\draw [black] (5.75,0) rectangle (6.25,0.5);
\draw [black] (7,0) rectangle (9,0.5);

\node [fill,circle,draw,inner sep = 0pt, outer sep = 0pt, minimum size=1.5mm] (from) at (6,.25) {};

\node [fill,circle,draw,inner sep = 0pt, outer sep = 0pt, minimum size=1.5mm] (to1) at (8.75,.25) {};
\node [fill,circle,draw,inner sep = 0pt, outer sep = 0pt, minimum size=1.5mm] (to2) at (8.5,.25) {};

\node [fill,circle,draw,inner sep = 0pt, outer sep = 0pt, minimum size=1.5mm] (to3) at (7.25,.25) {};
\node [fill,circle,draw,inner sep = 0pt, outer sep = 0pt, minimum size=1.5mm] (to4) at (7.5,.25) {};

\node [fill,circle,draw,inner sep = 0pt, outer sep = 0pt, minimum size=1.5mm] at (4.75,.25) {};
\node [fill,circle,draw,inner sep = 0pt, outer sep = 0pt, minimum size=1.5mm] at (4.5,.25) {};

\node [fill,circle,draw,inner sep = 0pt, outer sep = 0pt, minimum size=1.5mm] at (0.25,.25) {};
\node [fill,circle,draw,inner sep = 0pt, outer sep = 0pt, minimum size=1.5mm] at (0.5,.25) {};

\draw[->,bend left=80,thick] (from) to (to3);
\draw[->,bend left=80,thick] (from) to (to4);
\draw[->,bend left=80,thick] (from) to (to1);
\draw[->,bend left=80,thick] (from) to (to2);

\node at (8.025,0.25) {$\cdots$};
\node at (2.525,0.25) {$\cdots$};

\draw [decorate,decoration={brace,amplitude=10pt}]
(5.0,-0.15) -- (0.0,-0.15) node [black,midway,yshift=-0.6cm] { $X \in \operatorname{Tr}([i])$};

\draw [decorate,decoration={brace,amplitude=10pt}]
(9.0,-0.15) -- (7.0,-0.15) node [black,midway,yshift=-0.6cm] { $Y \in \operatorname{Tr}([j])$};

\draw [decorate,decoration={brace,amplitude=10pt}]
(9.0,-1.15) -- (0.0,-1.15) node [black,midway,yshift=-0.6cm] { $X \odot Y \in \operatorname{Tr}([i+j+2])$};

\end{tikzpicture}
\caption{The general picture for the operation $X \odot Y$. We highlight that the last vertex in $X$ occurs at position $i$, the pivot point is in spot $i+1$, and the first vertex of the shift of $Y$ occurs at position $i+2$. Note that we allow either $X$ or $Y$ to be empty.
}\label{dotdia}
\end{figure}

We refer the reader to \cite[Section 3.2]{bbr} for a detailed description of this operation. Essentially, we take the two directed graphs for each transfer system, and glue them together using a \emph{pivot point}. A simple counting exercise shows that this must be a subgraph of $[i+j+2]$. 
\end{remark}

The proof of Proposition~\ref{prop:bbr} follows from identifying the reccurence relation for the Catalan numbers together with the recurrence relation for $\operatorname{Tr}([n])$ coming from the following proposition.

\begin{proposition}[\cite{bbr}]\label{prop:build}
Let $X \in \operatorname{Tr}([i])$ and $Y \in \operatorname{Tr}([j])$. Then $X \odot Y \in  \operatorname{Tr}([i+j+2])$. Moreover, the converse is true, that is, if  $Z \in \operatorname{Tr}([n])$, then there exist unique $i$, $j$, $X \in \operatorname{Tr}([i])$ and $Y \in \operatorname{Tr}([j])$ such that $n=i+j+2$ and $Z=X \odot Y$.
\end{proposition}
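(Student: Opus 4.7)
The plan is to verify both halves of the bijection by identifying the pivot intrinsically. Recall from Remark~\ref{rem:odotoperation} that $X \odot Y$ partitions $[i+j+2]$ into an $X$-block $\{0,\dots,i\}$, the singleton pivot $\{i+1\}$, and a shifted $Y$-block $\{i+2,\dots,i+j+2\}$, with non-identity edges consisting of the edges of $X$, those of the shifted $Y$, and the prescribed edges emanating from the pivot into the $Y$-block.

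For the forward direction I would verify the three axioms of a transfer system by case analysis on the blocks. Wideness is immediate. For closure under composition, the only cross-block possibility is a pivot-edge $i+1 \to y$ postcomposed with a $Y$-edge $y \to y'$, whose composite is again a pivot-edge; no other cross-block composition can occur because the $X$-block has no edges exiting it. For pullback-stability, recall that in $[n]$ the pullback of $c \to a$ along $b \to a$ is simply $\min(c,b) \to b$. A brief case split on which blocks contain $a$, $b$, $c$ shows that every such pullback either degenerates to an identity (when the vertices straddle blocks) or reduces to a pullback internal to $X$ or to the shifted $Y$, where the transfer-system hypothesis applies.

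For the converse, given $Z \in \operatorname{Tr}([n])$, I would define the pivot canonically by
\[
p := \min\{k \in [n] \mid k \to \ell \in Z \text{ for every } \ell > k \text{ in } [n]\},
\]
which is well-defined because $k = n$ satisfies the condition vacuously. Setting $i := p-1$ and $j := n-p-1$ (with the degenerate conventions that $p = 0$ produces an empty $X$ and $p = n$ produces an empty $Y$), one declares $X$ and $Y$ to be the obvious restrictions of $Z$ to the $X$- and $Y$-blocks; these are immediately seen to be transfer systems, and the uniqueness of the decomposition follows from the intrinsic definition of $p$.

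The main obstacle is the claim that no $a < p$ is the source of a $Z$-edge landing at some $b \geq p$, which is precisely what pins $Z$ down as $X \odot Y$. My approach is to blend pullback-stability with composition: such an edge $a \to b$ would, by pullback along each $k \to b$ for $a \leq k \leq b$, force $a \to k \in Z$ and in particular $a \to p \in Z$; composing with the edges $p \to \ell$ guaranteed by the defining property of $p$ then yields $a \to \ell \in Z$ for every $\ell > a$, contradicting the minimality of $p$. Once this key lemma is in hand, the equality $Z = X \odot Y$ reduces to routine bookkeeping of which edges live in which block.
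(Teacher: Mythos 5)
The paper itself does not prove this proposition; it is imported from \cite{bbr} with only the pictorial description of $\odot$ in Remark~\ref{rem:odotoperation}. Your argument is correct and is essentially the standard one underlying the Catalan recursion in \cite{bbr}: the pivot of any decomposition is intrinsically characterized as the least element admitting a transfer to everything above it, and the crux is your key lemma that no element below the pivot can be the source of an edge crossing it. That lemma is proved correctly --- pulling $a\to b$ back along $k\to b$ gives $a\to k$ for all $a\le k\le b$, and composing $a\to p$ with the edges $p\to\ell$ covers $(p,n]$, so $a$ would violate the minimality of $p$ --- and together with the observation that all edges $p\to\ell$, $\ell>p$, are present (so they coincide exactly with the pivot edges of $\odot$), it does pin down $Z=X\odot Y$ and the uniqueness of the decomposition. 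One small inaccuracy in the forward direction: your stated trichotomy for pullbacks ("identity, or internal to $X$, or internal to shifted $Y$") omits the case of a pivot edge $i+1\to a$ pulled back along $b\to a$ with $b$ in the $Y$-block, which yields another pivot edge $i+1\to b$; this case is harmless but should be listed, since it is neither degenerate nor internal to $X$ or $Y$.
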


\subsection{An example}\label{sec:exampleof2}

Before we get any further into our enumeration efforts, let us illustrate the theory alongside an example. We will study first non-trivial total order that we will consider, namely $[2]$, which we will diagrammatically represent as follows.

\begin{figure}[H]
\centering
\begin{tikzpicture}
\node (pol) [draw=none, minimum size=2.5cm, regular polygon, regular polygon sides=3] at (0,0) {};
\foreach \n [count=\nu from 0, remember=\n as \lastn, evaluate={\nu+\lastn}] in {1,2,...,3}
\node[anchor=\n*(360/3)-180] at (pol.corner \n){};
\foreach \n in {1,2,...,3}
\draw[fill = black] (pol.corner \n) circle (2pt);
\draw[->, shorten >=3mm, shorten <=3mm](pol.corner 1) -- (pol.corner 2);
\draw[->, shorten >=3mm, shorten <=3mm](pol.corner 1) -- (pol.corner 3);
\draw[->, shorten >=3mm, shorten <=3mm](pol.corner 2) -- (pol.corner 3);
\end{tikzpicture}
\caption{The poset $[2]$. The minimal element 0 is at the top, and then we go counter-clockwise around the diagram.}
\end{figure}
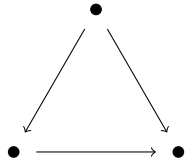

By Corollary \ref{cor:transferaf} we first need to determine the possible weak equivalence patterns on $[2]$. These are $$\mathsf{W}_1=\{ \mbox{isos} \},\,\, \mathsf{W}_2=\{ \mbox{isos}, 0 \rightarrow 1 \},\,\, \mathsf{W}_3=\{ \mbox{isos}, 1 \rightarrow 2 \},\,\, \mathsf{W}_4=\{ \mbox{all} \},$$
which correspond to the four interval paritions
\[
\pp_1 = [0,0] \amalg [1,1] \amalg [2,2], \,\, \, \pp_2 = [0,1] \amalg [2,2], \, \,\, \pp_3 = [0,0] \amalg [1,2], \, \,\, \pp_4 = [0,2].
\]

We now equip each of the interval partitions $\pp_i$ with a selection of contractible submodel structures.
\begin{itemize}
    \item[($\pp_1$)] Here we must chose three contractible model structures, each on the trivial lattice $[0]$. There is only one transfer system on $[0]$, and as such there is a unique choice of contractible model structure with interval partition $\pp_1$. Of course, this is unsurprising as this is exactly the trivial model structure.
    \item[($\pp_2$)] In this situation we need to pick a contractible sub model on a $[0]$ and a $[1]$. There are two choices of transfer system on $[1]$, and as such there are two model structures for the interval partition $\pp_2$.
    \item[($\pp_3$)] This situation is analogous to the one in $\pp_2$, and as such we obtain 2 more model structures here.
    \item[($\pp_4$)] We now need to pick a transfer system on $[2]$, of which there are five. These are  the contractible model structures on $[2]$.
\end{itemize}
  
  Concluding the above discussion, we see that there are
  \[
  1+2+2+5 = 10
  \]
  model structures on $[2]$. Let us summarise the construction that we will use to form explicit descriptions of all of the model structures on $[2]$.


\begin{construction}\leavevmode
\begin{enumerate}
\item Find all transfer systems on $[2]$. 
\item Construct the weak factorization system with those transfer systems as right sets. 
\item Find all pairs of transfer systems $(\mathcal{L}_1, \mathcal{R}_1)$ and $(\mathcal{L}_2, \mathcal{R}_2)$ with $\mathcal{R}_1 \subseteq \mathcal{R}_2$. 
\item Those pairs now give us premodel structures with 
\[
(\mathcal{L}_1, \mathcal{R}_1)= (\mathsf{C}, \mathsf{AF}) \,\,\,\mbox{and}\,\,\,
(\mathcal{L}_2, \mathcal{R}_2)= (\mathsf{AC}, \mathsf{F}),
\]
with weak equivalences given by $\mathsf{W}=\mathsf{AF}\circ\mathsf{AC}.$
\item Finally, identify those premodel structures where $\mathsf{W}$ satisfies 2-out-of-3. These are all possible model structures on $[2]$.
\end{enumerate}
\end{construction}

\begin{remark}
We note that this recipe would work on any other finite poset, but as the examples of $[n]$ show, doing so by hand will quickly become unmanageable, as would the enumeration.
\end{remark}

The five transfer systems and their corresponding left class, denoted as $(\mathcal{L},\mathcal{R})$ on $[2]$, are listed in Table~\ref{wfs2}. The circled numbers are only decoration so that we may refer to specific right classes.

\begin{table}[H]
\begin{tabular}{|c|c|}
\hline
\rowcolor[HTML]{C0C0C0} 
$\mathcal{L}$ & $\mathcal{R}$ \\ \hline

\begin{tikzpicture}
\node (pol) [draw=none, minimum size=1.75cm, regular polygon, regular polygon sides=3] at (0,0) {};
\foreach \n [count=\nu from 0, remember=\n as \lastn, evaluate={\nu+\lastn}] in {1,2,...,3}
\node[anchor=\n*(360/3)-180] at (pol.corner \n){};
\foreach \n in {1,2,...,3}
\draw[fill = black] (pol.corner \n) circle (2pt);
\draw[->, shorten >=3mm, shorten <=3mm](pol.corner 1) -- (pol.corner 2);
\draw[->, shorten >=3mm, shorten <=3mm](pol.corner 1) -- (pol.corner 3);
\draw[->, shorten >=3mm, shorten <=3mm](pol.corner 2) -- (pol.corner 3);
\end{tikzpicture}

&

\begin{tikzpicture}
\node (pol) [draw=none, minimum size=1.75cm, regular polygon, regular polygon sides=3] at (0,0) {};
\foreach \n [count=\nu from 0, remember=\n as \lastn, evaluate={\nu+\lastn}] in {1,2,...,3}
\node[anchor=\n*(360/3)-180] at (pol.corner \n){};
\foreach \n in {1,2,...,3}
\draw[fill = black] (pol.corner \n) circle (2pt);
\node at (0,0) {\circled{1}};
\end{tikzpicture}

\\ \hline

\begin{tikzpicture}
\node (pol) [draw=none, minimum size=1.75cm, regular polygon, regular polygon sides=3] at (0,0) {};
\foreach \n [count=\nu from 0, remember=\n as \lastn, evaluate={\nu+\lastn}] in {1,2,...,3}
\node[anchor=\n*(360/3)-180] at (pol.corner \n){};
\foreach \n in {1,2,...,3}
\draw[fill = black] (pol.corner \n) circle (2pt);
\draw[->, shorten >=3mm, shorten <=3mm](pol.corner 1) -- (pol.corner 3);
\draw[->, shorten >=3mm, shorten <=3mm](pol.corner 2) -- (pol.corner 3);
\end{tikzpicture}

&

\begin{tikzpicture}
\node (pol) [draw=none, minimum size=1.75cm, regular polygon, regular polygon sides=3] at (0,0) {};
\foreach \n [count=\nu from 0, remember=\n as \lastn, evaluate={\nu+\lastn}] in {1,2,...,3}
\node[anchor=\n*(360/3)-180] at (pol.corner \n){};
\foreach \n in {1,2,...,3}
\draw[fill = black] (pol.corner \n) circle (2pt);
\draw[->, shorten >=3mm, shorten <=3mm](pol.corner 1) -- (pol.corner 2);
\node at (0,0) {\circled{2}};
\end{tikzpicture}

\\ \hline

\begin{tikzpicture}
\node (pol) [draw=none, minimum size=1.75cm, regular polygon, regular polygon sides=3] at (0,0) {};
\foreach \n [count=\nu from 0, remember=\n as \lastn, evaluate={\nu+\lastn}] in {1,2,...,3}
\node[anchor=\n*(360/3)-180] at (pol.corner \n){};
\foreach \n in {1,2,...,3}
\draw[fill = black] (pol.corner \n) circle (2pt);
\draw[->, shorten >=3mm, shorten <=3mm](pol.corner 1) -- (pol.corner 2);
\end{tikzpicture}

&

\begin{tikzpicture}
\node (pol) [draw=none, minimum size=1.75cm, regular polygon, regular polygon sides=3] at (0,0) {};
\foreach \n [count=\nu from 0, remember=\n as \lastn, evaluate={\nu+\lastn}] in {1,2,...,3}
\node[anchor=\n*(360/3)-180] at (pol.corner \n){};
\foreach \n in {1,2,...,3}
\draw[fill = black] (pol.corner \n) circle (2pt);
\draw[->, shorten >=3mm, shorten <=3mm](pol.corner 2) -- (pol.corner 3);
\node at (0,0) {\circled{3}};
\end{tikzpicture}

\\ \hline

\begin{tikzpicture}
\node (pol) [draw=none, minimum size=1.75cm, regular polygon, regular polygon sides=3] at (0,0) {};
\foreach \n [count=\nu from 0, remember=\n as \lastn, evaluate={\nu+\lastn}] in {1,2,...,3}
\node[anchor=\n*(360/3)-180] at (pol.corner \n){};
\foreach \n in {1,2,...,3}
\draw[fill = black] (pol.corner \n) circle (2pt);
\draw[->, shorten >=3mm, shorten <=3mm](pol.corner 2) -- (pol.corner 3);
\end{tikzpicture}

&

\begin{tikzpicture}
\node (pol) [draw=none, minimum size=1.75cm, regular polygon, regular polygon sides=3] at (0,0) {};
\foreach \n [count=\nu from 0, remember=\n as \lastn, evaluate={\nu+\lastn}] in {1,2,...,3}
\node[anchor=\n*(360/3)-180] at (pol.corner \n){};
\foreach \n in {1,2,...,3}
\draw[fill = black] (pol.corner \n) circle (2pt);
\draw[->, shorten >=3mm, shorten <=3mm](pol.corner 1) -- (pol.corner 2);
\draw[->, shorten >=3mm, shorten <=3mm](pol.corner 1) -- (pol.corner 3);
\node at (0,0) {\circled{4}};
\end{tikzpicture}

\\ \hline

\begin{tikzpicture}
\node (pol) [draw=none, minimum size=1.75cm, regular polygon, regular polygon sides=3] at (0,0) {};
\foreach \n [count=\nu from 0, remember=\n as \lastn, evaluate={\nu+\lastn}] in {1,2,...,3}
\node[anchor=\n*(360/3)-180] at (pol.corner \n){};
\foreach \n in {1,2,...,3}
\draw[fill = black] (pol.corner \n) circle (2pt);
\end{tikzpicture}

&

\begin{tikzpicture}
\node (pol) [draw=none, minimum size=1.75cm, regular polygon, regular polygon sides=3] at (0,0) {};
\foreach \n [count=\nu from 0, remember=\n as \lastn, evaluate={\nu+\lastn}] in {1,2,...,3}
\node[anchor=\n*(360/3)-180] at (pol.corner \n){};
\foreach \n in {1,2,...,3}
\draw[fill = black] (pol.corner \n) circle (2pt);
\draw[->, shorten >=3mm, shorten <=3mm](pol.corner 1) -- (pol.corner 2);
\draw[->, shorten >=3mm, shorten <=3mm](pol.corner 1) -- (pol.corner 3);
\draw[->, shorten >=3mm, shorten <=3mm](pol.corner 2) -- (pol.corner 3);
\node at (0,0) {\circled{5}};
\end{tikzpicture}

\\ \hline
\end{tabular}
\vspace{3mm}
\caption{All possible weak factorization systems on the poset $[2]$.}\label{wfs2}
\end{table}

From Table~\ref{wfs2} we can now read off all possible premodel structures by considering those pairs $(\mathcal{L}_1, \mathcal{R}_1)$, $(\mathcal{L}_2, \mathcal{R}_2)$ such that $\mathcal{R}_1 \subseteq \mathcal{R}_2$. Specifically, the lattice of inclusions for the right class takes the form depicted in Figure \ref{associahedra}.

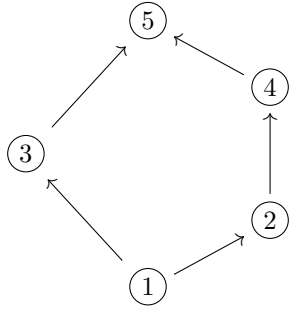
\begin{figure}[H]
    \centering \[
    \xymatrix@C=1.0em@R=0.5em{& & \circled{5} \\
    &&&& \circled{4} \ar[ull] \\
    \circled{3} \ar[uurr] \\
    &&&& \circled{2} \ar[uu] \\
    &&\circled{1} \ar[urr] \ar[uull]
    }
    \]
    \caption{The lattice of inclusions for transfer systems on $[2]$.}
    \label{associahedra}
\end{figure}

One then reads off all possible elements in the interval lattice of Figure~\ref{associahedra}, of which there are 13. We list the resulting premodel structures in Table~\ref{premodel2}, where the numbering corresponds to the interval we are considering.

\begin{table}[H]
\begin{tabular}{c|c|c|c|c}
\hline
\multicolumn{1}{|c|}{

\begin{tikzpicture}
\node (pol) [draw=none, minimum size=2.5cm, regular polygon, regular polygon sides=3] at (0,0) {};
\foreach \n [count=\nu from 0, remember=\n as \lastn, evaluate={\nu+\lastn}] in {1,2,...,3}
\node[anchor=\n*(360/3)-180] at (pol.corner \n){};
\foreach \n in {1,2,...,3}
\draw[fill = black] (pol.corner \n) circle (2pt);

\draw[right hook->, shorten >=3mm, shorten <=3mm](pol.corner 1) -- (pol.corner 2) node[midway,sloped,above]{$\sim$};

\draw[right hook->, shorten >=3mm, shorten <=3mm](pol.corner 1) -- (pol.corner 3) node[midway,sloped,above]{$\sim$};

\draw[right hook->, shorten >=3mm, shorten <=3mm](pol.corner 2) -- (pol.corner 3) node[midway,sloped,below]{$\sim$};
\node at (0,-0.1) {\circled{1}$\leqslant$\circled{1}};
\end{tikzpicture}

} & 

\begin{tikzpicture}
\node (pol) [draw=none, minimum size=2.5cm, regular polygon, regular polygon sides=3] at (0,0) {};
\foreach \n [count=\nu from 0, remember=\n as \lastn, evaluate={\nu+\lastn}] in {1,2,...,3}
\node[anchor=\n*(360/3)-180] at (pol.corner \n){};
\foreach \n in {1,2,...,3}
\draw[fill = black] (pol.corner \n) circle (2pt);

\draw[right hook->>, shorten >=3mm, shorten <=3mm](pol.corner 1) -- (pol.corner 2);

\draw[right hook->>, shorten >=3mm, shorten <=3mm](pol.corner 1) -- (pol.corner 3);

\draw[right hook->, shorten >=3mm, shorten <=3mm](pol.corner 2) -- (pol.corner 3) node[midway,sloped,below]{$\sim$};
\node at (0,-0.1) {\circled{1}$\leqslant$\circled{4}};
\end{tikzpicture}

& 

\begin{tikzpicture}
\node (pol) [draw=none, minimum size=2.5cm, regular polygon, regular polygon sides=3] at (0,0) {};
\foreach \n [count=\nu from 0, remember=\n as \lastn, evaluate={\nu+\lastn}] in {1,2,...,3}
\node[anchor=\n*(360/3)-180] at (pol.corner \n){};
\foreach \n in {1,2,...,3}
\draw[fill = black] (pol.corner \n) circle (2pt);

\draw[right hook->>, shorten >=3mm, shorten <=3mm](pol.corner 1) -- (pol.corner 2);

\draw[right hook->>, shorten >=3mm, shorten <=3mm](pol.corner 1) -- (pol.corner 3);

\draw[right hook->>, shorten >=3mm, shorten <=3mm](pol.corner 2) -- (pol.corner 3);
\node at (0,-0.1) {\circled{1}$\leqslant$\circled{5}};
\end{tikzpicture}

& 

\begin{tikzpicture}
\node (pol) [draw=none, minimum size=2.5cm, regular polygon, regular polygon sides=3] at (0,0) {};
\foreach \n [count=\nu from 0, remember=\n as \lastn, evaluate={\nu+\lastn}] in {1,2,...,3}
\node[anchor=\n*(360/3)-180] at (pol.corner \n){};
\foreach \n in {1,2,...,3}
\draw[fill = black] (pol.corner \n) circle (2pt);

\draw[right hook->, shorten >=3mm, shorten <=3mm](pol.corner 1) -- (pol.corner 2) node[midway,sloped,above]{$\sim$};

\draw[right hook->, shorten >=3mm, shorten <=3mm](pol.corner 1) -- (pol.corner 3);

\draw[right hook->>, shorten >=3mm, shorten <=3mm](pol.corner 2) -- (pol.corner 3);
\node at (0,-0.1) {\circled{1}$\leqslant$\circled{3}};

\end{tikzpicture}

& \multicolumn{1}{c|}{

\begin{tikzpicture}
\node (pol) [draw=none, minimum size=2.5cm, regular polygon, regular polygon sides=3] at (0,0) {};
\foreach \n [count=\nu from 0, remember=\n as \lastn, evaluate={\nu+\lastn}] in {1,2,...,3}
\node[anchor=\n*(360/3)-180] at (pol.corner \n){};
\foreach \n in {1,2,...,3}
\draw[fill = black] (pol.corner \n) circle (2pt);

\draw[->>, shorten >=3mm, shorten <=3mm](pol.corner 1) -- (pol.corner 2) node[midway,sloped,above]{$\sim$};

\draw[right hook->, shorten >=3mm, shorten <=3mm](pol.corner 1) -- (pol.corner 3) node[midway,sloped,above]{$\sim$};

\draw[right hook->, shorten >=3mm, shorten <=3mm](pol.corner 2) -- (pol.corner 3) node[midway,sloped,below]{$\sim$};
\node at (0,-0.1) {\circled{2}$\leqslant$\circled{2}};

\end{tikzpicture}

}  \\ \hline
\multicolumn{1}{|c|}{

\begin{tikzpicture}
\node (pol) [draw=none, minimum size=2.5cm, regular polygon, regular polygon sides=3] at (0,0) {};
\foreach \n [count=\nu from 0, remember=\n as \lastn, evaluate={\nu+\lastn}] in {1,2,...,3}
\node[anchor=\n*(360/3)-180] at (pol.corner \n){};
\foreach \n in {1,2,...,3}
\draw[fill = black] (pol.corner \n) circle (2pt);

\draw[->>, shorten >=3mm, shorten <=3mm](pol.corner 1) -- (pol.corner 2) node[midway,sloped,above]{$\sim$};

\draw[right hook->>, shorten >=3mm, shorten <=3mm](pol.corner 1) -- (pol.corner 3);

\draw[right hook->>, shorten >=3mm, shorten <=3mm](pol.corner 2) -- (pol.corner 3);
\node at (0,-0.1) {\circled{2}$\leqslant$\circled{5}};

\end{tikzpicture}

} & 

\begin{tikzpicture}
\node (pol) [draw=none, minimum size=2.5cm, regular polygon, regular polygon sides=3] at (0,0) {};
\foreach \n [count=\nu from 0, remember=\n as \lastn, evaluate={\nu+\lastn}] in {1,2,...,3}
\node[anchor=\n*(360/3)-180] at (pol.corner \n){};
\foreach \n in {1,2,...,3}
\draw[fill = black] (pol.corner \n) circle (2pt);

\draw[->>, shorten >=3mm, shorten <=3mm](pol.corner 1) -- (pol.corner 2) node[midway,sloped,above]{$\sim$};

\draw[->>, shorten >=3mm, shorten <=3mm](pol.corner 1) -- (pol.corner 3) node[midway,sloped,above]{$\sim$};

\draw[right hook->, shorten >=3mm, shorten <=3mm](pol.corner 2) -- (pol.corner 3) node[midway,sloped,below]{$\sim$};
\node at (0,-0.1) {\circled{4}$\leqslant$\circled{4}};

\end{tikzpicture}

&  

\begin{tikzpicture}
\node (pol) [draw=none, minimum size=2.5cm, regular polygon, regular polygon sides=3] at (0,0) {};
\foreach \n [count=\nu from 0, remember=\n as \lastn, evaluate={\nu+\lastn}] in {1,2,...,3}
\node[anchor=\n*(360/3)-180] at (pol.corner \n){};
\foreach \n in {1,2,...,3}
\draw[fill = black] (pol.corner \n) circle (2pt);

\draw[->>, shorten >=3mm, shorten <=3mm](pol.corner 1) -- (pol.corner 2) node[midway,sloped,above]{$\sim$};

\draw[->>, shorten >=3mm, shorten <=3mm](pol.corner 1) -- (pol.corner 3) node[midway,sloped,above]{$\sim$};

\draw[->>, shorten >=3mm, shorten <=3mm](pol.corner 2) -- (pol.corner 3) node[midway,sloped,below]{$\sim$};
\node at (0,-0.1) {\circled{5}$\leqslant$\circled{5}};
\end{tikzpicture}

&  

\begin{tikzpicture}
\node (pol) [draw=none, minimum size=2.5cm, regular polygon, regular polygon sides=3] at (0,0) {};
\foreach \n [count=\nu from 0, remember=\n as \lastn, evaluate={\nu+\lastn}] in {1,2,...,3}
\node[anchor=\n*(360/3)-180] at (pol.corner \n){};
\foreach \n in {1,2,...,3}
\draw[fill = black] (pol.corner \n) circle (2pt);

\draw[right hook->>, shorten >=3mm, shorten <=3mm](pol.corner 1) -- (pol.corner 2);

\draw[->>, shorten >=3mm, shorten <=3mm](pol.corner 1) -- (pol.corner 3);

\draw[->>, shorten >=3mm, shorten <=3mm](pol.corner 2) -- (pol.corner 3) node[midway,sloped,below]{$\sim$};
\node at (0,-0.1) {\circled{3}$\leqslant$\circled{5}};

\end{tikzpicture}

& \multicolumn{1}{c|}{

\begin{tikzpicture}
\node (pol) [draw=none, minimum size=2.5cm, regular polygon, regular polygon sides=3] at (0,0) {};
\foreach \n [count=\nu from 0, remember=\n as \lastn, evaluate={\nu+\lastn}] in {1,2,...,3}
\node[anchor=\n*(360/3)-180] at (pol.corner \n){};
\foreach \n in {1,2,...,3}
\draw[fill = black] (pol.corner \n) circle (2pt);

\draw[right hook->, shorten >=3mm, shorten <=3mm](pol.corner 1) -- (pol.corner 2) node[midway,sloped,above]{$\sim$};

\draw[->, shorten >=3mm, shorten <=3mm](pol.corner 1) -- (pol.corner 3) node[midway,sloped,above]{$\sim$};

\draw[->>, shorten >=3mm, shorten <=3mm](pol.corner 2) -- (pol.corner 3) node[midway,sloped,below]{$\sim$};
\node at (0,-0.1) {\circled{3}$\leqslant$\circled{3}};
\end{tikzpicture}

} \\ \hline
                        & 
                        \cellcolor[HTML]{FFCCC9} 
\begin{tikzpicture}
\node (pol) [draw=none, minimum size=2.5cm, regular polygon, regular polygon sides=3] at (0,0) {};
\foreach \n [count=\nu from 0, remember=\n as \lastn, evaluate={\nu+\lastn}] in {1,2,...,3}
\node[anchor=\n*(360/3)-180] at (pol.corner \n){};
\foreach \n in {1,2,...,3}
\draw[fill = black] (pol.corner \n) circle (2pt);

\draw[right hook->>, shorten >=3mm, shorten <=3mm](pol.corner 1) -- (pol.corner 2);
\draw[right hook->, shorten >=3mm, shorten <=3mm](pol.corner 1) -- (pol.corner 3) node[midway,sloped,above]{$\sim$};
\draw[right hook->, shorten >=3mm, shorten <=3mm](pol.corner 2) -- (pol.corner 3) node[midway,sloped,below]{$\sim$};
\node at (0,-0.1) {\circled{1}$\leqslant$\circled{2}};

\end{tikzpicture}
                        
                        & 
                         \cellcolor[HTML]{FFCCC9} 
\begin{tikzpicture}
\node (pol) [draw=none, minimum size=2.5cm, regular polygon, regular polygon sides=3] at (0,0) {};
\foreach \n [count=\nu from 0, remember=\n as \lastn, evaluate={\nu+\lastn}] in {1,2,...,3}
\node[anchor=\n*(360/3)-180] at (pol.corner \n){};
\foreach \n in {1,2,...,3}
\draw[fill = black] (pol.corner \n) circle (2pt);

\draw[->>, shorten >=3mm, shorten <=3mm](pol.corner 1) -- (pol.corner 2) node[midway,sloped,above]{$\sim$};

\draw[right hook->>, shorten >=3mm, shorten <=3mm](pol.corner 1) -- (pol.corner 3);

\draw[right hook->, shorten >=3mm, shorten <=3mm](pol.corner 2) -- (pol.corner 3) node[midway,sloped,below]{$\sim$};
\node at (0,-0.1) {\circled{2}$\leqslant$\circled{4}};

\end{tikzpicture}
                        
                        & 
                         \cellcolor[HTML]{FFCCC9} 
\begin{tikzpicture}
\node (pol) [draw=none, minimum size=2.5cm, regular polygon, regular polygon sides=3] at (0,0) {};
\foreach \n [count=\nu from 0, remember=\n as \lastn, evaluate={\nu+\lastn}] in {1,2,...,3}
\node[anchor=\n*(360/3)-180] at (pol.corner \n){};
\foreach \n in {1,2,...,3}
\draw[fill = black] (pol.corner \n) circle (2pt);

\draw[->>, shorten >=3mm, shorten <=3mm](pol.corner 1) -- (pol.corner 2) node[midway,sloped,above]{$\sim$};
\draw[->>, shorten >=3mm, shorten <=3mm](pol.corner 1) -- (pol.corner 3) node[midway,sloped,above]{$\sim$};
\draw[  right hook->>, shorten >=3mm, shorten <=3mm](pol.corner 2) -- (pol.corner 3);
\node at (0,-0.1) {\circled{4}$\leqslant$\circled{5}};

\end{tikzpicture}
                    
                        &                         \\ \cline{2-4}
\end{tabular}
\vspace{3mm}
\caption{The 13 possible premodel structures on the poset $[2]$. The arrows use the decoration introduced in Definition~\ref{defn:modelcat}. The shaded bottom cells represent premodel structures which are not model structures.}\label{premodel2}
\end{table}

Of the 13 premodel structures appearing in Table \ref{premodel2}, we see that the three in the bottom row do not satisfy the 2-out-of-3 property for weak equivalences (that is, they do not correspond to interval partitions of $[2]$). As such, we (re)conclude that there are 10 model structures on the lattice $[2]$. 

\subsection{Enumerating model structures}\label{subsec:enum}

We will now state and prove the enumeration result for $Q([n])$. Recall from Corollary~\ref{transferaf} that we already have an explicit enumeration of the size of $W([n])$, the collection of weak factorization systems of the poset $[n]$. We can now use this in conjunction with Corollary~\ref{cor:main} to prove the promised enumeration result. First, we require an intermediary lemma. We also note that this lemma can be extracted from \cite{shapiro}, but we provide an independent proof that is of interest in creating an explicit bijection between $Q([n])$ and the collection of monotonic functions from $[n]$ to itself. See Remark~\ref{rmk:bijection} for further discussion of this.

\begin{lemma}[{\cite[Propositions 3.1 and 3.3]{shapiro}}]\label{lemma:CpId}
For every positive integer $n$,
\[
  \binom{2n-1}{n} = \sum_{k=1}^n ~ \sum_{i_1+\cdots+i_k = n} ~\prod_{j=1}^k \mathsf{Cat}(i_j)
\]
where the second sum runs over ordered $k$-tuples of positive integers summing to $n$.
\end{lemma}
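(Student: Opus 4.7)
The plan is to prove the identity via ordinary generating functions. Let $C(x) = \sum_{n \geq 0} \mathsf{Cat}(n) x^n$ be the usual Catalan generating function, satisfying $C(x) = 1 + xC(x)^2$, and set $f(x) = C(x) - 1 = \sum_{n \geq 1} \mathsf{Cat}(n) x^n$. The right-hand side of the claimed identity is precisely the coefficient of $x^n$ in $\sum_{k \geq 1} f(x)^k$, because a composition $(i_1,\dots,i_k)$ of $n$ into $k$ positive parts, weighted by $\prod_j \mathsf{Cat}(i_j)$, contributes exactly to $[x^n] f(x)^k$. Writing
\[
F(x) := \sum_{k \geq 1} f(x)^k = \frac{f(x)}{1-f(x)},
\]
it therefore suffices to show $[x^n] F(x) = \binom{2n-1}{n}$ for all $n \geq 1$.

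The next step is to extract a closed form for $F$. From $C = 1 + xC^2$ one immediately obtains $f = x(1+f)^2$. Writing $f = F/(1+F)$ (so that $1-f = 1/(1+F)$) and substituting transforms this into $F(1+F) = x(1+2F)^2$; expanding and grouping common terms yields the tidy identity $(1-4x)F(1+F) = x$. Solving the resulting quadratic $F^2 + F - \frac{x}{1-4x} = 0$ and selecting the branch with $F(0)=0$ gives
\[
F(x) = \frac{1}{2}\left(\frac{1}{\sqrt{1-4x}} - 1\right).
\]

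To finish, I would compare this with the generating function of the left-hand side. The standard identity $\sum_{n \geq 0} \binom{2n}{n} x^n = (1-4x)^{-1/2}$, combined with $\binom{2n-1}{n} = \frac{1}{2}\binom{2n}{n}$ for $n \geq 1$ (an immediate consequence of Pascal's rule together with $\binom{2n-1}{n} = \binom{2n-1}{n-1}$), yields $\sum_{n \geq 1} \binom{2n-1}{n} x^n = \frac{1}{2}\bigl((1-4x)^{-1/2} - 1\bigr) = F(x)$, which is the desired equality. The one place that requires care is the algebraic reduction from $f = x(1+f)^2$ to the equation $(1-4x)F(1+F) = x$, but this is a direct calculation with the substitution $f = F/(1+F)$. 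A more combinatorial alternative --- presumably closer to what the authors have in mind, given their stated interest in a bijection with monotone maps $[n] \to [n]$ --- would identify $\binom{2n-1}{n}$ with the number of weakly increasing functions $\{1,\dots,n\} \to \{1,\dots,n\}$ (via stars and bars) and decompose each such function into maximal consecutive blocks on which an appropriate subdiagonal (Dyck-type) condition holds, a block of length $i$ contributing a factor of $\mathsf{Cat}(i)$.
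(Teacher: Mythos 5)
Your generating-function argument is correct: from $C=1+xC^2$ you correctly derive $f=x(1+f)^2$, the substitution $f=F/(1+F)$ does give $(1-4x)F(1+F)=x$, and the branch with $F(0)=0$ is $F(x)=\tfrac12\bigl((1-4x)^{-1/2}-1\bigr)$, whose coefficients are $\tfrac12\binom{2n}{n}=\binom{2n-1}{n}$. This is, however, a genuinely different route from the paper's. The paper proves the identity bijectively: it counts east-north lattice paths from $(0,0)$ to $(n,n)$ that begin with a vertical step (there are $\binom{2n-1}{n}$ of these), and decomposes each path according to where it \emph{crosses} the diagonal, so that a path realizing the ordered partition $i_1+\cdots+i_k=n$ splits into alternating above/below-diagonal Dyck segments contributing $\prod_j \Cat(i_j)$ --- essentially the ``combinatorial alternative'' you sketch in your last sentence. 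The trade-off is real: your computation is shorter and entirely routine, but the paper's lattice-path decomposition is not just a proof of the numerical identity --- it is reused verbatim to build the explicit bijection $Q([n])\to\End([n])$ of Remark~\ref{rmk:bijection} and, by restricting to paths with exactly $k$ diagonal crossings, to obtain the refined count of Theorem~\ref{thm:refined} (model structures with homotopy category $[k]$). A purely generating-function proof would leave those later arguments without their combinatorial substrate, so if you adopted your approach you would still need to set up the path decomposition elsewhere.
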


\begin{proof}
Consider the lattice paths on the grid $[0,n]\times [0,n]$ that begin with a step from $(0,0)$ to $(0,1)$, end at $(n,n)$, and only take steps up or to the right by one unit; by choosing which steps go to the right, we see that there are precisely $\binom{2n-1}{n}$ of these. Each such path determines an ordered partition of $n$ according to when it \emph{crosses} (not just touches) the diagonal.

Now consider all paths that produce a particular ordered partition
\[
  i_1+\cdots+i_k=n.
\]
Such paths consist of a $2i_1$-step path (weakly) above the diagonal, followed by a $2i_2$-step path (weakly) below the diagonal, \emph{etc}., with the total number of possibilities counted by
\[
  \prod_{j=1}^k \Cat(i_j).
\]
(See Figure \ref{fig:CpId} for a graphical depiction. Our count uses the well-known fact that there are $\Cat(k)$ Dyck paths of length $2k$.) Adding up these terms over all ordered partitions of $n$ gives the identity.
\end{proof}

\begin{figure}
\begin{center}
\begin{tikzpicture}
\draw[step=1cm,gray,very thin] (0,0) grid (7,7);
\draw[dotted,thick] (0,0) -- (3,3);
\draw[dashed,thick] (3,3) -- (5,5);
\draw[loosely dotted,thick] (5,5) -- (7,7);
\draw[thick] (0,0) -- (0,2) -- (1,2) -- (1,3) -- (4,3) -- (4,4) -- (5,4) -- (5,7) -- (7,7);
\end{tikzpicture}
\end{center}
\caption{An east-north lattice path from $(0,0)$ to $(7,7)$ beginning with a step from $(0,0)$ to $(0,1)$, as in the first proof of Lemma \ref{lemma:CpId}. The diagonal changes styles when the path crosses it, yielding the ordered partition $3+2+2$ of $7$. There are a total of $\Cat(3)\Cat(2)\Cat(2)$ such paths corresponding to this partition.}\label{fig:CpId}
\end{figure}
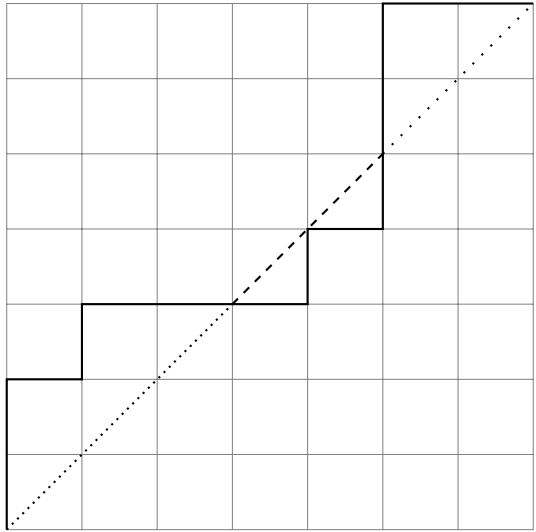




We are now in a position to prove the main theorem.

\begin{theorem}\label{thm:Qn}
Let $Q([n])$ be the collection of model structures on the poset $[n]$. Then
\[
\# Q([n]) = \binom{2n+1}{n}.
\]
\end{theorem}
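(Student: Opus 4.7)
The plan is to combine the bijection of Corollary~\ref{cor:main} with the enumeration of contractible model structures from Corollary~\ref{transferaf} and then apply the Catalan identity of Lemma~\ref{lemma:CpId}. By Corollary~\ref{cor:main}, a model structure on $[n]$ is determined uniquely by an interval partition $\mathfrak{p}$ of $[n]$ together with, for each block of $\mathfrak{p}$, a choice of contractible model structure on that block. By Remark~\ref{rmk:partition}, interval partitions of $[n]$ are in bijection with compositions $(s_1,\dots,s_k)$ of $n+1$, where a block of cardinality $s_i$ is isomorphic as a lattice to $[s_i-1]$.

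Next I would invoke Corollary~\ref{transferaf} which gives $\mathsf{Cat}(s_i)$ contractible model structures on a block isomorphic to $[s_i-1]$. Multiplying over the blocks of a fixed interval partition and then summing over all compositions of $n+1$ yields
\[
\#Q([n]) = \sum_{k=1}^{n+1} \ \sum_{\substack{s_1+\cdots+s_k=n+1 \\ s_j \geq 1}} \ \prod_{j=1}^k \mathsf{Cat}(s_j).
\]

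Finally I would apply Lemma~\ref{lemma:CpId} with $n$ replaced by $n+1$. This identifies the double sum above with $\binom{2(n+1)-1}{n+1} = \binom{2n+1}{n+1}$, and the standard symmetry of binomial coefficients rewrites this as $\binom{2n+1}{n}$, giving the claimed formula.

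No real obstacle remains, since all of the ingredients are already in place: the structural reduction from Section~\ref{sec:extending} packaged in Corollary~\ref{cor:main}, the Catalan count of contractible model structures from Corollary~\ref{transferaf}, and the combinatorial identity of Lemma~\ref{lemma:CpId}. The only subtlety worth checking carefully is the bookkeeping between interval partitions of the $(n+1)$-element set $[n]$ and compositions of the integer $n+1$, together with the index shift $[s_i-1] \leftrightarrow $ block of cardinality $s_i$; once this is set up correctly, the proof reduces to substituting $n+1$ for $n$ in Lemma~\ref{lemma:CpId}.
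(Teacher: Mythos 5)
Your proposal is correct and follows essentially the same route as the paper: reduce to counting choices of contractible submodels via Corollary~\ref{cor:main}, count $\mathsf{Cat}(s_i)$ contractible structures per block of size $s_i$ via Corollary~\ref{transferaf}, sum over compositions of $n+1$, and close with Lemma~\ref{lemma:CpId}. Your explicit handling of the index shift and the symmetry $\binom{2n+1}{n+1}=\binom{2n+1}{n}$ is just a more careful writing of the same bookkeeping the paper leaves implicit.
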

\begin{proof}
By Corollary~\ref{cor:main}, there is a bijective correspondence between $Q([n])$ and choices of contractible submodels on $[n]$. We may partition the latter objects according to interval partitions, and then assign a contractible model structure to each block. Given an interval partition $\mathfrak{p} = [0,a_1] \amalg [a_1+1,a_2] \amalg \cdots \amalg [a_k+1,n]$ of $[n]$, set $a_0 = -1$, $a_{k+1} = n$, and define
\[
  C_{\mathfrak{p}} := \prod_{i=0}^{k+1} \mathsf{Cat}(a_{i+1}-a_i).
\]
By Corollaries~\ref{cor:transferaf} and \ref{transferaf}, there are $C_{\mathfrak{p}}$ many contractible submodels on $[n]$ with  components $\mathfrak{p}$.  Thus the total number of contractible submodels on $[n]$ is
\[
  \sum_{\mathfrak{p}} C_{\mathfrak{p}}
\]
where the sum runs over interval partitions of $[n]$. It remains to show that this sum of products of Catalan numbers equals $\binom{2n+1}{n}$, which follows from Lemma~\ref{lemma:CpId} using Remark \ref{rmk:partition}, which tells us such partitions are in bijective correspondence with ordered partitions of the integer $n+1$.
\end{proof}

\begin{example}
Let $n=4$, then there are are $\binom{9}{4} = 126$ model structures on $[4]$. These model structures are given in Figure~\ref{fig:n4} at the end of this paper.
\end{example}

\begin{remark}\label{rmk:bijection}

We can describe an explicit bijection
\[
  \Phi\colon Q([n])\longrightarrow \End([n])
\]
where $\End([n])$ denotes the set of monotonic functions $[n]\to [n]$. We represent the latter with staircase paths on the grid $[n+1] \times [n+1]$ including the edge $(0,0)\rightarrow (0,1)$ ending at $(n,n)$. Beginning with a model structure $M$ on $[n]$, record its contractible submodels $M_0,\ldots,M_k$.
We now know that each of the contractible submodels is determined by a transfer system. The Catalan enumeration result of Remark~\ref{rem:odotoperation} implies that there is a bijection between transfer systems and Dyck paths, i.e. staircase paths strictly above the diagonal. 
   Let $D_i$ denote the Dyck path corresponding to the transfer system determining $M_i$, and let
$\overline{D}$ denote the reflection of a Dyck path over the diagonal. Then the concatenation of paths
\[
  P = \overline{D_0}D_1\overline{D_2}D_3\cdots
\]
is an east-north lattice path from $(0,0)$ to $(n+1,n+1)$ that begins with a step $(0,0)$ to $(0,1)$.  Define a function $f\colon [0,n]\to [1,n+1]$ so that $f(k)$ is the highest point on $P$ in the column $\{k\}\times [0,n+1]$. We finally set
\[
  \Phi(M) := f-1\in \End([n]).
\]

For example, if $M$ is a model structure on $[6]$ inducing the path $P$ of Figure~\ref{fig:CpId}, then the associated endomorphism of $[6]$ takes the values
\[
  1,~2,~2,~2,~3,~6,~6
\]
in order.
\end{remark}

By restricting our attention to lattice paths that cross the diagonal exactly $k$ times, we can produce refined statistics on $Q([n])$ counting model structures with a particular homotopy category $[k]$.

\begin{theorem}\label{thm:refined}
There are
\[
\frac{2(k+1)}{n+k+2}\binom{2n+1}{n-k}
\]
model structures on $[n]$ whose homotopy category is isomorphic to $[k]$.
\end{theorem}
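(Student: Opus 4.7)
The plan is to reduce the count to a Catalan generating-function identity via the structure theorem. By Corollary~\ref{cor:main}, a model structure on $[n]$ is determined by a choice of contractible submodels; by Lemmas~\ref{lem:uniquebif} and \ref{lem:homotopycategoryposet} the homotopy category has one object per weak equivalence class, hence is isomorphic to $[k]$ precisely when the underlying interval partition has $k+1$ blocks. If those blocks have cardinalities $i_1,\ldots,i_{k+1}$ (so that $\sum_j i_j=n+1$, \emph{cf.}\ Remark~\ref{rmk:partition}), then the $j$-th block is the lattice $[i_j-1]$ and admits $\mathsf{Cat}(i_j)$ contractible model structures by Proposition~\ref{prop:bbr} and Corollary~\ref{transferaf}.

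Summing over interval partitions with $k+1$ blocks, the desired count is therefore
\[
N(n,k):=\sum_{\substack{i_1+\cdots+i_{k+1}=n+1\\ i_j\ge 1}}\prod_{j=1}^{k+1}\mathsf{Cat}(i_j).
\]
Let $C(x)=\sum_{m\ge 0}\mathsf{Cat}(m)x^m$ denote the standard Catalan generating function, so that $N(n,k)=[x^{n+1}](C(x)-1)^{k+1}$. Applying the functional equation $C(x)-1=xC(x)^2$ then rewrites this as $N(n,k)=[x^{n-k}]C(x)^{2k+2}$.

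The central computational step is the classical identity
\[
[x^m]C(x)^r=\frac{r}{2m+r}\binom{2m+r}{m},
\]
a quick consequence of Lagrange inversion. Setting $m=n-k$ and $r=2k+2$ yields $N(n,k)=\frac{k+1}{n+1}\binom{2n+2}{n-k}$, and a single absorption step $\binom{2n+2}{n-k}=\frac{2n+2}{n+k+2}\binom{2n+1}{n-k}$ puts this in the stated form. The only real work is the Lagrange identity; as a consistency check, summing $N(n,k)$ over $0\le k\le n$ recovers $\binom{2n+1}{n}$, matching Theorem~\ref{thm:Qn}.

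Alternatively, one could argue entirely combinatorially using the bijection $\Phi$ of Remark~\ref{rmk:bijection}: model structures on $[n]$ with homotopy category $[k]$ correspond exactly to east-north lattice paths from $(0,0)$ to $(n+1,n+1)$ that begin with the step $(0,0)\to(0,1)$ and cross the diagonal exactly $k$ times, and the count of such paths is the relevant entry of Shapiro's Catalan triangle. The main obstacle either way is verifying the closed form of $[x^{n-k}]C(x)^{2k+2}$; everything else is bookkeeping afforded by Corollary~\ref{cor:main}.
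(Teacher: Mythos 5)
Your proposal is correct, and the combinatorial reduction is exactly the one the paper uses: homotopy category $[k]$ corresponds to an interval partition with $k+1$ blocks, and the count is $\sum_{i_1+\cdots+i_{k+1}=n+1}\prod_j \mathsf{Cat}(i_j)$ by Corollary~\ref{cor:main} and Corollary~\ref{transferaf}. Where you diverge is in evaluating this sum: the paper passes through the lattice-path bijection of Remark~\ref{rmk:bijection} (paths crossing the diagonal exactly $k$ times) and then simply cites Shapiro's Propositions 2.1 and 3.3 for the closed form, whereas you evaluate the sum directly as $[x^{n+1}](C(x)-1)^{k+1}=[x^{n-k}]C(x)^{2k+2}$ via the functional equation $C(x)-1=xC(x)^2$ and the Lagrange-inversion identity $[x^m]C(x)^r=\tfrac{r}{2m+r}\binom{2m+r}{m}$; your algebra (including the absorption $\binom{2n+2}{n-k}=\tfrac{2n+2}{n+k+2}\binom{2n+1}{n-k}$) checks out. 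The trade-off is that your version is self-contained modulo one classical identity and avoids the lattice-path machinery entirely, while the paper's version keeps the bijective picture front and center (which it wants anyway for Remark~\ref{rmk:shapiro}) at the cost of delegating the computation to a citation. Either is a complete proof.
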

\begin{proof}
By the proofs of Theorem~\ref{thm:Qn} and Lemma~\ref{lemma:CpId} and the bijection of Remark~\ref{rmk:bijection}, we see that a model structure on $[n]$ with homotopy category $[k]$ corresponds to an east-north lattice path on $[0,n+1]\times [0,n+1]$ that begins with a step from $(0,0)$ to $(0,1)$ and ends at $(n+1,n+1)$ which crosses the diagonal \emph{exactly} $k$ times. There are precisely
\[
  \sum_{i_1+\cdots+i_{k+1}=n+1}\prod_{j=1}^{k+1}\Cat(i_j)
\]
such paths, where the sum is over length $k+1$ ordered partitions of $n+1$.
By \cite[Propositions 2.1 and 3.3]{shapiro}, we know that this sum is precisely
\[
 \frac{2(k+1)}{n+k+2}\binom{2n+1}{n-k}
\]
as desired.
\end{proof}

\begin{example}
Table~\ref{table:shapiro} gives the number of model structures on $[n]$ with homotopy category $[k]$ for $0\le k\le n \le 5$. It can be found elsewhere in combinatorial literature and is (up to an indexing shift) sometimes called \emph{Shapiro's Catalan triangle} after  \cite{shapiro}.
\begin{table}[H]
\begin{tabular}{c|cccccccc}
\diagbox{$n$}{$k$} & 0  & 1  & 2  & 3 & 4 & 5 &  & Total \\ \cline{1-7} \cline{9-9} 
0               & 1  &    &    &   &   &  && 1     \\
1               & 2  & 1  &    &   &   &  && 3     \\
2               & 5  & 4  & 1  &   &   &  && 10    \\
3               & 14 & 14 & 6  & 1 &   &  && 35    \\
4               & 42 & 48 & 27 & 8 & 1 &  && 126   \\
5               & 132 & 165 & 110 & 44 & 10 & 1 && 462
\end{tabular}
\caption{Shapiro's Catalan triangle, indexed so that the $(k,n)$ entry corresponds to the number of model structures on $[n]$ with homotopy category isomorphic to $[k]$. The first column displays the (shifted) Catalan numbers, corresponding to contractible model structures. The final column displays the row sum, \emph{i.e.}, the total number of model structures on $[n]$.}\label{table:shapiro}
\end{table}
\end{example}

\begin{remark}\label{rmk:shapiro}
Shapiro arrives at the results of \cite{shapiro} by considering pairs of non-intersecting east-north lattice paths. The \emph{distance} between such paths ending at $(a,b)$ and $(c,d)$ is defined to be $|a-c|$. It turns out that the number of model structures on $[n]$ with homotopy category $[k]$ is equal to the number of pairs of non-intersecting east-north lattice paths of length $n+1$ and distance $k+1$. We leave it as an open question whether there is a natural bijection between these structures.
\end{remark}

\begin{remark}\label{rmk:allfib}
To conclude this subsection, we mention that there are two additional classes of model structures which we can count. First, consider those model structures on a lattice $\mathcal P$ for which all morphisms are fibrations. The rest of the model structure is then uniquely specified by a choice of acyclic fibrations satisfying the $2$-out-of-$3$ property. These are given by so-called \emph{saturated} transfer systems, which are studied in \cite{hmoo}. A consequence of the main theorem of that paper is that when $\mathcal P = [n]$, there are exactly $2^n$ such model structures with $\mathsf{F} = \mathsf{All}$. (In fact, a saturated transfer system on $[n]$ is completely determined by the covering relations in the system. Since there are $n$ covering relations in $[n]$, this explains the count $2^n$.)

By the standard duality on model structures, it is also the case that there are $2^n$ model structures on $[n]$ with $\mathsf{C} = \mathsf{All}$.
\end{remark}

\subsection{Relation to premodel structures}\label{subsec:premodel}

In Definition~\ref{def:premodel}, we defined model categories to be premodel categories such that the weak equivalences satisfy the 2-out-of-3 property. In Section~\ref{sec:exampleof2} we used this description and saw that we had 13 premodel structures, of which 10 were model structures.

In the general case of $[n]$, we shall now see that it is possible to always enumerate the number of premodel structures. To do so, we need to know all compatible pairs $(\mathcal{L}_1,\mathcal{R}_1)$, $(\mathcal{L}_2,\mathcal{R}_2)$ of weak factorization systems on $[n]$ with $\mathcal{R}_1 \subseteq \mathcal{R}_2$. The first observation that we will need is that we fully understand the lattice structure of $W([n])$, the set of weak factorization systems on $[n]$. 

We recall that the $(n+1)$-Tamari lattice (also sometimes called the $(n+1)$-associahedron) is the classical lattice one obtains for example, by  considering binary trees with $n+1$ leaves, ordered by tree rotation operations~\cite{stasheff}. We will denote by $\mathcal{A}(n+1)$, the $(n+1)$-Tamari lattice.

\begin{proposition}[\cite{bbr}]\label{prop:asso}
The poset $W([n])$ under inclusion of the right class is isomorphic to $\mathcal{A}(n+1)$.
\end{proposition}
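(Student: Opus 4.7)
The plan is to combine Proposition \ref{fooqwresult}, which identifies $W([n])$ (ordered by inclusion of the right class) with the poset $\operatorname{Tr}([n])$ of transfer systems on $[n]$ ordered by inclusion, with a direct combinatorial identification of $\operatorname{Tr}([n])$ with $\mathcal{A}(n+1)$. The task thus reduces to producing an order isomorphism $\Psi \colon \operatorname{Tr}([n]) \xrightarrow{\sim} \mathcal{A}(n+1)$.

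First, I would define $\Psi$ recursively using Proposition \ref{prop:build}: every $Z \in \operatorname{Tr}([n])$ has a unique decomposition $Z = X \odot Y$ with $X \in \operatorname{Tr}([i])$ and $Y \in \operatorname{Tr}([j])$ satisfying $i+j+2=n$, where we allow either $X$ or $Y$ to be empty. This mirrors the standard recursive decomposition of a binary tree (a vertex of $\mathcal{A}(n+1)$) into a root together with left and right subtrees. Setting $\Psi(Z)$ to be the binary tree with subtrees $\Psi(X)$ and $\Psi(Y)$, and taking the empty transfer system as the base case for the single-leaf tree, yields a bijection by matching the Catalan recursions on each side.

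The substantive step is verifying that $\Psi$ is an order isomorphism. Both posets are generated by their covering relations, so by induction on $n$ it suffices to show that $T \lessdot T'$ in $\operatorname{Tr}([n])$ if and only if $\Psi(T) \lessdot \Psi(T')$ in $\mathcal{A}(n+1)$. A cover in the Tamari lattice is a right rotation at some internal node, while a cover in $\operatorname{Tr}([n])$ adjoins a minimal new relation (together with whatever arrows are forced by transitivity and the pullback-closure axiom). Writing $T = X \odot Y$, either the cover lies entirely inside $X$ or inside $Y$ and preserves the pivot, in which case the inductive hypothesis produces a rotation in the corresponding subtree, or else the cover shifts the pivot. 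The main obstacle is the pivot-shifting case: one must show that a minimal enlargement crossing the pivot shifts the pivot by exactly the amount corresponding to a single right rotation at the root of $\Psi(T)$, and conversely that every root rotation arises this way. This requires careful bookkeeping of how the pullback-closure axiom forces new arrows when an edge is adjoined near the pivot, and verifying that the unique $\odot$-decomposition of $T'$ differs from that of $T$ by precisely one root rotation; this combinatorial matching is the heart of the argument.
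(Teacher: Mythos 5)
Your overall strategy---reducing via Proposition~\ref{fooqwresult} to the poset of transfer systems and then building an isomorphism $\operatorname{Tr}([n])\cong\mathcal{A}(n+1)$ out of the recursive $\odot$-decomposition of Proposition~\ref{prop:build}---is the right one, and it is in the spirit of the argument in the cited reference \cite{bbr} (the present paper gives no proof, only the citation). The recursive definition of $\Psi$ does yield a well-defined bijection, since the unique decomposition $Z=X\odot Y$ mirrors the root/left-subtree/right-subtree decomposition of a binary tree and the two Catalan recursions agree.

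As written, however, the proposal is not yet a proof, because the only substantive content of the proposition---that this particular bijection is an \emph{order} isomorphism---is exactly the step you defer. A bijection between two Catalan-enumerated posets need not respect their orders (most recursive Catalan bijections do not transport the Tamari order), so matching the recursions buys you nothing about the partial orders. You correctly isolate what must be shown: (i) that every cover $T\lessdot T'$ in $\operatorname{Tr}([n])$ either restricts to a cover in one $\odot$-factor with the pivot unchanged, or moves the pivot in the way prescribed by a single root rotation, and conversely; and (ii) that the orientation of inclusion matches the orientation of rotation in $\mathcal{A}(n+1)$. But you then declare this ``the heart of the argument'' without carrying it out. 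In particular, the asserted trichotomy of covers itself needs justification---one must check that adjoining a minimal relation inside one block, after closing under transitivity and the pullback axiom, can neither create cross-block arrows nor alter the pivot---and the pivot-shifting case requires an explicit description of which arrows the closure axioms force, which is where the actual combinatorics of transfer systems enters. Until that analysis is supplied, the proposal establishes only a bijection of sets, not the claimed isomorphism of posets.
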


\begin{example}
In the case of $n=2$ we obtain the 3-Tamari lattice as already displayed in Figure~\ref{associahedra}.
\end{example}

As such, we need only understand the cardinality of the lattice of intervals of the Tamari lattice  to obtain a count of premodel structures on $([n])$. Such an analysis has been achieved by Chapoton.

\begin{proposition}[\cite{chapoton}]
Let $I_n$ be the set of intervals of the $n$-Tamari lattice. Then
\[
\#I_n = \frac{2}{n(n+1)}\binom{4n+1}{n-1}.
\]
\end{proposition}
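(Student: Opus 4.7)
The plan is to count intervals in the Tamari lattice $\mathcal{A}(n)$ by setting up a generating function and deriving a functional equation through a canonical recursive decomposition of intervals. First I would choose a combinatorial model of $\mathcal{A}(n)$ in which the order and recursive structure are both manageable: elements are Dyck paths of semilength $n$, and the Tamari order has the classical description that $P \leq Q$ iff $P$ lies weakly below $Q$ (after a standard identification). An interval is then a synchronized pair $(P, Q)$ with $P \leq Q$, and removing the initial arch of $Q$ (together with the portion of $P$ beneath it) gives a natural decomposition into smaller intervals.

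Next I would introduce a bivariate generating function
\[
\Phi(x, y) = \sum_{n \ge 0} x^n \sum_{I \in I_n} y^{c(I)}
\]
where $c(I)$ is an auxiliary statistic (for instance, the number of contacts between the upper and lower path, or the size of the component isolated by the first return of $Q$ to the axis). The advantage of a catalytic variable is that the recursion coming from the decomposition closes on $\Phi(x, y)$ expressed in terms of itself and the specialization $\Phi(x, 1)$. The expected shape of the functional equation is polynomial of low degree in $y$, with coefficients involving $\Phi(x, 1)$, and it will not be directly solvable without tracking $c(I)$.

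Then I would apply the kernel method, or an appropriate change of variables, to resolve $\Phi(x, y)$ and in particular to extract $F(x) := \Phi(x, 1) = \sum_n \#I_n \, x^n$. The closed form on the right, $\frac{2}{n(n+1)}\binom{4n+1}{n-1}$, strongly suggests that $F(x)$ satisfies an algebraic equation whose solution, via Lagrange inversion with a kernel of the form $(1+u)^{4}$, produces exactly this binomial. Concretely, writing $F = U(1+U)^{?}$ for an appropriate power series $U$ satisfying $U = x(1+U)^{4}$ (or a similar relation) should recover the formula by reading off the coefficient $[x^n]$.

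The main obstacle is the first two steps: selecting a statistic $c(I)$ that yields a closed recursion is not canonical, and many natural choices fail to give a tractable equation. Once the right catalytic variable is found, solving the functional equation and applying Lagrange inversion is routine, but without hindsight the combinatorial insight needed to set up the correct recursion is substantial — which is precisely the content of \cite{chapoton}.
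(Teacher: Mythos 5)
The paper does not prove this proposition at all: it is imported verbatim from \cite{chapoton}, so there is no internal argument to compare yours against. Your outline does correctly identify the method Chapoton actually used --- a recursive decomposition of an interval by the first return of the upper path, a bivariate generating function with a catalytic variable recording contacts of the lower path, a functional equation that closes on $\Phi(x,y)$ and $\Phi(x,1)$, and an algebraic resolution leading to Lagrange inversion against a kernel of the form $(1+u)^4$. So as a description of the literature, the proposal is accurate.

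As a proof, however, it has a genuine gap, and you concede it yourself in the last paragraph: none of the load-bearing steps is carried out. The statistic $c(I)$ is never fixed, the functional equation is never written down (its exact shape, with the discrete derivative $\frac{y\Phi(x,y)-\Phi(x,1)}{y-1}$, is where all the combinatorial content lives), and the algebraic equation for $F(x)=\Phi(x,1)$ is never derived, so the Lagrange inversion step has nothing to act on. Checking that the final answer ``looks like'' it comes from $U=x(1+U)^4$ is a plausibility argument, not a derivation --- note for instance that $\frac{2}{n(n+1)}\binom{4n+1}{n-1}=\frac{2\,(4n+1)!}{(n+1)!\,(3n+2)!}$, which is also Tutte's count of rooted $3$-connected planar triangulations, and the bijective route through triangulations (Bernardi--Bonichon, the reference \cite{intervals} mentioned in Section~\ref{sec:further}) is an entirely different and equally valid proof; the closed form alone does not single out your proposed kernel. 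To turn the proposal into a proof you would need to either execute Chapoton's generating-function computation in full or exhibit the bijection with triangulations and invoke Tutte's formula; as written, the proposal is a roadmap that defers exactly the hard part to the reference it is meant to replace.
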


By combining the two above propositions (and noting the index shift required), we arrive at the following corollary.

\begin{corollary}\label{cor:premodel}
Let $P([n])$ be the collection of (unique) premodel structures on the poset $[n]$. Then
\[
\# P([n]) = \frac{2}{(n+1)(n+2)} \binom{4n+5}{n}.
\]
\hfill\qedsymbol
\end{corollary}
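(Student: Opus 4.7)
The plan is to identify premodel structures on $[n]$ with intervals in the poset $W([n])$, and then invoke the two preceding propositions to translate this into a count of intervals in the $(n+1)$-Tamari lattice.

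First I would unpack Definition~\ref{def:premodel}: a premodel structure on $[n]$ is the data of two weak factorization systems $(\mathsf{C},\mathsf{AF})$ and $(\mathsf{AC},\mathsf{F})$ satisfying the compatibility $\mathsf{AF}\subseteq\mathsf{F}$ (equivalently $\mathsf{AC}\subseteq\mathsf{C}$). Since $W([n])$ is ordered by inclusion of the right class, this compatibility is precisely the statement that the pair $((\mathsf{C},\mathsf{AF}),(\mathsf{AC},\mathsf{F}))$ forms an interval in $W([n])$, with $(\mathsf{C},\mathsf{AF})$ the lower and $(\mathsf{AC},\mathsf{F})$ the upper element. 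By Proposition~\ref{fooqwresult}, each such weak factorization system is uniquely determined by its right class (the corresponding transfer system), so there is no over-counting and we obtain a bijection
\[
P([n]) \;\cong\; I(W([n]))
\]
where $I(\cdot)$ denotes the set of intervals of a finite poset (including trivial intervals $[x,x]$, which recover the case of a genuine model structure in the sense that $\mathsf{AC}=\mathsf{C}$ and $\mathsf{AF}=\mathsf{F}$).

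Next I would apply Proposition~\ref{prop:asso}, which identifies $W([n])$ with the $(n+1)$-Tamari lattice $\mathcal{A}(n+1)$. Intervals being a lattice-theoretic invariant, we deduce
\[
\#P([n]) \;=\; \#I_{n+1},
\]
where $I_{n+1}$ is the set of intervals in the $(n+1)$-Tamari lattice in the indexing used by Chapoton. Finally, substituting $n+1$ for $n$ in Chapoton's formula yields
\[
\#P([n]) \;=\; \frac{2}{(n+1)(n+2)}\binom{4(n+1)+1}{(n+1)-1} \;=\; \frac{2}{(n+1)(n+2)}\binom{4n+5}{n},
\]
which is the claimed identity.

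The only genuine subtlety here is getting the Tamari-lattice indexing right, and in particular making sure that Chapoton's $I_n$ indexes intervals in the $n$-Tamari lattice (so that one must substitute $n+1$ to match $\mathcal{A}(n+1)$). Beyond that book-keeping, the argument is essentially a one-line translation through the two cited results; there is no additional combinatorial or categorical obstacle to overcome.
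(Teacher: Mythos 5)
Your proposal is correct and follows exactly the paper's route: the paper likewise identifies premodel structures with pairs of weak factorization systems whose right classes are nested (i.e.\ intervals in $W([n])$), invokes Proposition~\ref{prop:asso} to pass to the $(n+1)$-Tamari lattice, and applies Chapoton's interval count with the same index shift $n\mapsto n+1$. Your handling of the indexing bookkeeping is also the right (and only) point of care here.
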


As such, we have closed formulas for both the number of premodel structures and the numbers of model structures. We now record an observation about the relation between these enumerations.

\begin{proposition}\label{prop:asym}
Quillen model structures and premodel structures on $[n]$ satisfy the asymptotic relationship
\[
\frac{\# Q([n])}{\# P([n])} \sim c2^{dn}n^2
\]
where
\begin{align*}
c &= \frac{243 \sqrt{3/2}}{1024} \approx 0.290638,\\[5pt]
d &= 3 \log_2(3) - 6 \approx -1.24511.
\end{align*}
In particular, we have
\[
\lim_{n \to \infty }\frac{\#{Q}([n])}{\#{P}([n])} = 0.
\]
\end{proposition}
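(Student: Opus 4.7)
The plan is to reduce both sides of the ratio to closed-form Stirling asymptotics using the exact formulas $\#Q([n]) = \binom{2n+1}{n}$ (Theorem~\ref{thm:Qn}) and $\#P([n]) = \frac{2}{(n+1)(n+2)}\binom{4n+5}{n}$ (Corollary~\ref{cor:premodel}). The whole statement is then a routine, if careful, calculation with $n! \sim \sqrt{2\pi n}(n/e)^n$. The small twist is that $\binom{4n+5}{n}$ is not a ``pure'' $\binom{an}{bn}$, so we handle the $+5$ shift separately.

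First I would use the identity $\binom{2n+1}{n} = \binom{2n}{n}\cdot\frac{2n+1}{n+1}$, combine with the standard central binomial asymptotic $\binom{2n}{n}\sim 4^n/\sqrt{\pi n}$, and observe that $(2n+1)/(n+1)\to 2$, yielding
\[
\binom{2n+1}{n} \sim \frac{2\cdot 4^n}{\sqrt{\pi n}}.
\]
For the denominator I would first write $\binom{4n+5}{n} = \binom{4n}{n}\cdot\prod_{j=1}^{5}\frac{4n+j}{3n+j}$, note that the second factor converges to $(4/3)^5 = 1024/243$, and then apply Stirling to $\binom{4n}{n}$ to obtain the well-known estimate
\[
\binom{4n}{n} \sim \sqrt{\tfrac{2}{3\pi n}}\,\Bigl(\tfrac{256}{27}\Bigr)^n.
\]
Combining these with $\frac{2}{(n+1)(n+2)} \sim 2/n^2$ yields
\[
\#P([n]) \sim \frac{2048}{243\,n^2}\sqrt{\tfrac{2}{3\pi n}}\,\Bigl(\tfrac{256}{27}\Bigr)^n.
\]

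Finally I would form the ratio, collect the exponential factors using $4/(256/27) = 27/64$, and simplify the prefactor. The exponential part becomes
\[
\Bigl(\tfrac{27}{64}\Bigr)^n = 2^{(3\log_2 3 - 6)n} = 2^{dn},
\]
matching the claimed $d$. The constant works out to
\[
\frac{2\cdot 243}{2048}\cdot\sqrt{\tfrac{3}{2}} = \frac{243\sqrt{3/2}}{1024} = c,
\]
and the polynomial factor is exactly $n^2$, since the $1/\sqrt{\pi n}$ from $\#Q$ cancels against the $\sqrt{2/(3\pi n)}$ inside $\#P$ up to the constant $\sqrt{3/2}$ and no further $n$-power. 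The limit statement follows immediately since $d < 0$ dominates the $n^2$ growth. The only place requiring any vigilance is tracking the $\sqrt{\pi n}$ factors coming from the three different Stirling applications and verifying that they collapse cleanly into $\sqrt{3/2}$; this is the main obstacle only in the bookkeeping sense, since no substantive idea beyond Stirling is needed.
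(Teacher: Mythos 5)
Your proposal is correct and follows exactly the route the paper takes: the paper's proof is a one-line citation of Stirling's approximation together with Theorem~\ref{thm:Qn} and Corollary~\ref{cor:premodel}, and your computation simply fills in the bookkeeping. All of your intermediate asymptotics ($\binom{2n+1}{n}\sim 2\cdot 4^n/\sqrt{\pi n}$, $\binom{4n}{n}\sim\sqrt{2/(3\pi n)}\,(256/27)^n$, the $(4/3)^5$ correction, and the final collapse to $c\,2^{dn}n^2$) check out.
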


\begin{proof}
This follows from Stirling's approximation for $n!$ (see, for instance, \cite[\S 1.2.11]{aocp}), Theorem \ref{thm:Qn}, and Corollary \ref{cor:premodel}.

\end{proof}

\section{Bousfield localizations}\label{sec:bousfield}

Bousfield localizations are a way of formally adding weak equivalences to an existing model structure. Left Bousfield localizations retain the original cofibrations while adding weak equivalences, while right localizations retain the original fibrations. We will show in this section that every model structure on $[n]$ can be created from the trivial model structure via a zig-zag of left and right Bousfield localizations.

Let us first recap some basic definitions. These are very general, but we will soon see that they simplify considerably when the underlying category is $[n]$.  Before we begin, we need to point out that the definitions make use of homotopy mapping spaces. A mapping space $\Map(X,Y)$ is a simplicial set with $\pi_0(\Map(X,Y))=[X,Y].$ We refer the reader to \cite[Chapter 5.2]{hovey} for more details. Fortunately, when the underlying category is $[n]$, the mapping spaces become very simple indeed.

\begin{lemma}\label{lem:mappingspace}
Let $[n]$ be equipped with a model structure. Then for any two objects $X,Y \in [n]$ we have
\[
\Map(X,Y)=
\begin{cases}
\{\ast\}& \text{if } X \simeq Y,\\
\{\ast\} & \text{if } X \not\simeq Y,\,\,X < Y,\\
\, \,\varnothing & \text{if } X \not\simeq Y, X > Y.
\end{cases}
\]
\end{lemma}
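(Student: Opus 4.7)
The strategy is to reduce the computation of $\Map(X,Y)$ to a hom-set in $[n]$ between suitable cofibrant and fibrant replacements, and then decide (non)emptiness via the interval partition of the weak equivalences. The plan has two stages: a discreteness stage, and a combinatorial case analysis stage.

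First I would argue that for any $X,Y \in [n]$, the homotopy function complex $\Map(X,Y)$ is weakly equivalent to the discrete simplicial set $\operatorname{Hom}_{[n]}(X^{\mathsf c}, Y^{\mathsf f})$ for any cofibrant resolution $X^{\mathsf c}\xrightarrow{\sim} X$ and fibrant resolution $Y\xrightarrow{\sim} Y^{\mathsf f}$. The essential point is that $[n]$ is a poset, so coproducts (resp.\ products) of an object with itself are trivial: $X \sqcup X = X$ and $X\times X = X$. Consequently the constant cosimplicial object on $X^{\mathsf c}$ is a Reedy cofibrant cosimplicial frame, and dually the constant simplicial object on $Y^{\mathsf f}$ is a Reedy fibrant simplicial frame. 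So $\Map(X,Y)$ is the constant simplicial set on the set $\operatorname{Hom}_{[n]}(X^{\mathsf c}, Y^{\mathsf f})$, which is either empty or a singleton since $[n]$ is a poset.

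Next I would exploit the interval partition $\mathfrak p = [0,a_1]\amalg\cdots\amalg[a_k+1,n]$ of the weak equivalence structure. If $X$ lies in block $B_X = [a_X^-, a_X^+]$, then (using the functorial replacements $Q^{\min}$ and $R^{\max}$ of Corollary~\ref{cor:cofibrep} applied to the contractible submodel on $B_X$) we can take $X^{\mathsf c}=Q^{\min}(X)\in [a_X^-, X]$; similarly $Y^{\mathsf f}=R^{\max}(Y)\in [Y, a_Y^+]$ if $Y\in B_Y = [a_Y^-,a_Y^+]$. The three cases of the lemma now correspond to the three possible relationships between the blocks $B_X$ and $B_Y$.
\begin{itemize}
    \item If $X\simeq Y$, then $B_X=B_Y$ and this common block carries a contractible submodel with a unique bifibrant object $Z = R^{\max}(Q^{\min}(\cdot))$ (cf.\ the remark following Corollary~\ref{cor:cofibrep}); thus $X^{\mathsf c} \le Z \le Y^{\mathsf f}$, so the hom-set is a singleton.
    \item If $X\not\simeq Y$ and $X<Y$, then $B_X$ lies strictly below $B_Y$, giving $X^{\mathsf c}\le a_X^+ < a_Y^- \le Y^{\mathsf f}$, again a singleton.
    \item If $X\not\simeq Y$ and $X>Y$, then $B_Y$ lies strictly below $B_X$, so $Y^{\mathsf f}\le a_Y^+ < a_X^- \le X^{\mathsf c}$, and $\operatorname{Hom}_{[n]}(X^{\mathsf c},Y^{\mathsf f})=\varnothing$.
\end{itemize}

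The main obstacle I anticipate is the first stage: verifying cleanly that the frames in a poset model category are trivial so that $\Map(X,Y)$ is actually discrete. One way around the technicalities is to avoid frames entirely and appeal instead to the identity $\pi_0\Map(X,Y) = \operatorname{Ho}([n])(X,Y)$, noting that by Lemma~\ref{lem:homotopycategoryposet} the homotopy category is a poset (equivalent to $[k]$ where $k+1$ is the number of blocks of $\mathfrak p$), so $\pi_0\Map(X,Y)$ is already at most a singleton. The case analysis above then computes this $\pi_0$, and discreteness of the full mapping space can be obtained as a separate (standard) fact for 1-categorical posets.
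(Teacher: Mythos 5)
Your proposal is correct and follows essentially the same route as the paper: both reduce $\Map(X,Y)$ to the discrete hom-set between a cofibrant replacement of $X$ and a fibrant replacement of $Y$ via constant (co)simplicial frames, and then decide emptiness by comparing positions of the replacements, the paper phrasing this via the unique bifibrant replacement (Lemma~\ref{lem:uniquebif}) and you via the blocks of the interval partition. The two case analyses are equivalent, so no further comment is needed.
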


\begin{proof}
Firstly, we recall that the $n$-simplices of the simplicial set $\Map(X,Y)$ are given by
\[
\Map(X,Y)_n=\operatorname{Hom}(X^{\mathsf{c}} \otimes \Delta^n, Y^{\mathsf{f}})
\]
where $A \otimes K$ is the tensor of the object $A$ with a simplicial set $K$.
To form this tensor, we replace a cofibrant object $A$ with the constant cosimplicial object with $A$ in every degree, and then use the fact that the category of cosimplicial objects in a model category has a tensor with simplicial sets. In particular, for any cosimplicial object $A^\bullet$, we have that $A^\bullet \otimes \Delta^n = A^n.$ 
One can compute that as simplicial sets,

\[
\Map(X,Y) \simeq \operatorname{Hom}(X^{\mathsf{c}}, Y^{\mathsf{f}}).
\]
In particular, we see that 
\[
\Map(X,Y) \simeq \operatorname{Hom}(X^{\mathsf{cf}}, Y^{\mathsf{cf}}).
\]
If $X$ and $Y$ are weakly equivalent, they have the same (unique) bifibrant replacement by Lemma \ref{lem:uniquebif}. If they are not, then the bifibrant replacement of $X$ will be smaller than the bifibrant replacement of $Y$ if and only if $X <Y$. This proves the claim. 
\end{proof}

\begin{remark}
Note that although a mapping space for a model structure on $[n]$ is not a particularly complicated object, it does come with some hidden subtleties. It is possible to have $\operatorname{Hom}(X,Y) = \varnothing$ but $\Map(X,Y) = \{ \ast\}$ (although not vice versa). For example, consider the category $[1]$ equipped with the model structure where the single non-identity map is an acyclic fibration. Then $\operatorname{Hom}(1,0) = \varnothing$ but $\Map(1,0) = \{\ast\}$ as both objects have the same bifibrant replacement (namely 0).
\end{remark}

We now introduce the general definitions required to discuss Bousfield localization. We will then go further and show how these definitions can be explicitly described in the case where the base category is $[n]$.

\begin{definition}
Let $\C$ be a model category and $\mathcal{W}$ a class of morphisms in $\C$. 
\begin{itemize}
    \item A fibrant object $Z \in \C$ is \emph{$\mathcal{W}$-local} if for all $f: X \longrightarrow Y \in \mathcal{W}$ the map
\[
f^*: \Map(Y,Z) \longrightarrow \Map(X,Z)
\]
is a weak equivalence of simplicial sets. 

\item A morphism $g:A \longrightarrow B$ is a \emph{$\mathcal{W}$-equivalence} if 
\[
g^*: \Map(B,Z) \longrightarrow \Map(A,Z)
\]
is a weak equivalence of simplicial sets for all $\mathcal{W}$-local objects $Z$.
\end{itemize}
\end{definition}

The definition of $\mathcal{W}$-equivalences implies that all elements of $\mathcal{W}$ are $\mathcal{W}$-equivalences, as are all weak equivalences of $\mathcal{C}$. In general, the class of $\mathcal{W}$-equivalences is much larger than $\mathcal{W}$ itself. However, in our case the answer is much simpler for many of the properties.

\begin{lemma}\label{lem:localproperties}
For any model category, the $\mathcal{W}$-equivalences satisfy 2-out-of-3. On $[n]$, the $\mathcal{W}$-equivalences are decomposable. 
\end{lemma}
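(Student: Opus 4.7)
My plan is to prove the two assertions separately, since the first is a general fact about $\mathcal{W}$-equivalences while the second relies on the explicit mapping-space computation in Lemma~\ref{lem:mappingspace}.

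For the 2-out-of-3 property in any model category, I would just push the question across the representable functor $\Map(-, L)$ for an arbitrary $\mathcal{W}$-local object $L$. If $f \colon A \to B$ and $g \colon B \to C$ are composable, then applying $\Map(-, L)$ yields a composition of maps of simplicial sets $(gf)^* = f^* \circ g^*$. Since weak equivalences of simplicial sets satisfy 2-out-of-3, if any two of $f^*, g^*, (gf)^*$ are weak equivalences of simplicial sets then so is the third. Since this holds for every $\mathcal{W}$-local $L$, the 2-out-of-3 property transfers to $\mathcal{W}$-equivalences in $\mathcal{C}$.

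For decomposability on $[n]$, the plan is to use Lemma~\ref{lem:mappingspace} to reduce each $\Map(A, L)$ to the data of whether $A$ lies above or below the weak equivalence class of $L$. Concretely, if $L$ sits in the weak equivalence interval $[a, b]$ of the ambient model structure on $[n]$, then $\Map(A, L) = \{\ast\}$ when $A \leq b$ and $\Map(A, L) = \varnothing$ otherwise. Now suppose $f \colon X \to Y$ is a $\mathcal{W}$-equivalence that factors as $X \to Z \to Y$ in $[n]$ (so $X \leq Z \leq Y$), and fix a $\mathcal{W}$-local object $L$. Because $f^*$ is a weak equivalence of simplicial sets, $\Map(X, L)$ and $\Map(Y, L)$ are either both empty or both a singleton. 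The monotonicity $X \leq Z \leq Y$ then forces $\Map(Z, L)$ to agree with these: if $Y \leq b$, then $Z \leq b$ and $X \leq b$, giving singletons everywhere; if $X > b$, then $Z > b$ and $Y > b$, giving emptiness everywhere. The remaining logical possibility (namely $X \leq b < Y$) would give $f^* \colon \varnothing \to \{\ast\}$, contradicting that $f$ is a $\mathcal{W}$-equivalence.

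Therefore the induced maps $\Map(Z, L) \to \Map(X, L)$ and $\Map(Y, L) \to \Map(Z, L)$ are both either identities of a point or identities of $\varnothing$, hence weak equivalences of simplicial sets for every $\mathcal{W}$-local $L$. This proves both $X \to Z$ and $Z \to Y$ are $\mathcal{W}$-equivalences. There is no serious obstacle here; the only point to handle carefully is confirming that the case $X \leq b < Y$ really is ruled out by the hypothesis on $f$, which is exactly what makes the argument use $f$ being a $\mathcal{W}$-equivalence (not merely a map).
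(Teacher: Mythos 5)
Your proof is correct and follows essentially the same route as the paper: 2-out-of-3 for the first claim is transferred from simplicial sets via $\Map(-,L)$, and decomposability is deduced from the computation of mapping spaces in Lemma~\ref{lem:mappingspace} by a case analysis on whether they are empty or a point. Your repackaging of that case analysis via the single threshold $b$ (the top of the weak equivalence class of $L$) is a slightly tidier organization of the same argument, and you correctly identify that the hypothesis on $f$ is used precisely to exclude the mixed case $X \leq b < Y$.
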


\begin{proof}
The first point follows from the fact that the 2-out-of-3 property holds between the mapping spaces involved. 

As for the second point, let us assume that $Z$ is $\mathcal{W}$-local, $A \longrightarrow C$ is a $\mathcal{W}$-equivalence and $A \leq B \leq C$. Thus, both $\Map(A,Z)$ and $\Map(C,Z)$ are either empty or a point. 

If they are both empty, then $A>Z$, $C>Z$, $A\not\simeq Z$ and $C \not\simeq Z.$ Thus, we also have to have $B>Z$. Furthermore, $B$ cannot be weakly equivalent to $Z$, because otherwise decomposability of weak equivalences would imply that $A$ is also weakly equivalent to $Z$, which would be a contradiction. We therefore have $\Map(B,Z)=\varnothing$, as required.

If $\Map(A,Z)$ and $\Map(C,Z)$ are both equal to a point, then we have the following cases. 
If $B \simeq Z,$ then $\Map(B,Z)$ is also a point.
If $B \not\simeq Z$, then we have to show that $B<Z.$ This again breaks up into the following cases.
\begin{itemize}
\item $Z \simeq C:$ This means that $B$ must be smaller than both $C$ and $Z$, otherwise decomposability of weak equivalences would imply that $B \simeq Z.$
\item $Z \not\simeq C:$ This means that $C<Z,$ and therefore $B<Z$ too, as required.
\end{itemize}

\end{proof}

As the name suggests, right Bousfield localizations, also called colocalizations (or cellularizations), are a dual concept to left Bousfield localizations. As they are not as commonly used as left localizations, we will recall the definitions and just state the technical lemmas without proof.

\begin{definition}Let $\mathcal{W}$ be a class of morphisms in a model category $\C$. 
\begin{itemize}
\item A cofibrant object $Z \in \C$ is \emph{$\mathcal{W}$-colocal} if
\[
f_*: \Map(Z,X) \longrightarrow \Map(Z,Y)
\]
is a weak equivalence for all $f:X \longrightarrow Y$ in $\mathcal{W}$.

\item A morphism $g: A \longrightarrow B$ is a $\mathcal{W}$-coequivalence, if
\[
g_*: \Map(Z,A) \longrightarrow \Map(Z,B)
\]
is a weak equivalence for all $\mathcal{W}$-colocal $Z \in \C.$
\end{itemize}
\end{definition}

Again, we go from a class of morphisms $\mathcal{W}$ to colocal objects and then to $\mathcal{W}$-coequivalences. The $\mathcal{W}$-coequivalences contain both $\mathcal{W}$ and the weak equivalences of $\C$. 

\bigskip
In the case of $\C$ having underlying category $[n]$ we get the following analog of Lemma \ref{lem:localproperties}.

\begin{lemma}\label{lem:colocalproperties}
For any model category, the $\mathcal{W}$-coequivalences satisfy 2-out-of-3. On $[n]$, the $\mathcal{W}$-coequivalences are decomposable.  \hfill\qedsymbol
\end{lemma}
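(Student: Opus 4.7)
The approach is to dualize the proof of Lemma~\ref{lem:localproperties} line by line. Since the definition of a $\mathcal{W}$-coequivalence is the pointwise dual of a $\mathcal{W}$-equivalence, obtained by replacing $\Map(-,Z)$ with $\Map(Z,-)$ and local with colocal, every structural feature of the earlier argument has a direct counterpart. Throughout, the inputs needed are Lemma~\ref{lem:mappingspace} (describing mapping spaces on $[n]$) and Lemma~\ref{lem:decomp} (decomposability of the underlying weak equivalences of $[n]$).

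For the 2-out-of-3 assertion, the proof is literally the same as the first part of Lemma~\ref{lem:localproperties}: given composable morphisms and a $\mathcal{W}$-colocal $Z$, the 2-out-of-3 property for weak equivalences of simplicial sets applied to $\Map(Z,-)$ is all that is needed, and this works in any model category without reference to $[n]$.

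For decomposability on $[n]$, I would fix a $\mathcal{W}$-coequivalence $A\to C$, an intermediate $B$ with $A\le B\le C$, and a $\mathcal{W}$-colocal object $Z$. By Lemma~\ref{lem:mappingspace}, $\Map(Z,A)$ and $\Map(Z,C)$ each take values in $\{\varnothing,\{\ast\}\}$ and must agree. In the empty case one has $Z>A$ and $Z>C$ with $Z$ weakly inequivalent to both, so $Z>B$; if additionally $B\simeq Z$, then decomposing the resulting weak equivalence $B\to Z$ through $C$ would force $C\simeq Z$, contradicting $Z\not\simeq C$, so $\Map(Z,B)=\varnothing$. In the point case, either $B\simeq Z$ (done immediately) or $B\not\simeq Z$: if $Z\not\simeq A$, then $Z<A\le B$; if $Z\simeq A$, then since $B\not\simeq Z\simeq A$ and weak equivalence classes on $[n]$ are intervals, $B$ must lie strictly above the equivalence class of $A$, so again $Z<B$. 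In every case $\Map(Z,B)$ matches $\Map(Z,A)$ and $\Map(Z,C)$, and so both factor maps are $\mathcal{W}$-coequivalences.

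The main obstacle is purely bookkeeping: one must correctly reverse every inequality under dualization and verify that the factorization appearing in each sub-case (here $B\to C\to Z$ instead of the $Z\to A\to B$ used in Lemma~\ref{lem:localproperties}) lives entirely in $[n]$, so that decomposability of the ambient weak equivalences is legitimately available. No new ideas beyond those in the proof of Lemma~\ref{lem:localproperties} are required.
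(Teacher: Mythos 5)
Your proof is correct and is exactly the argument the paper intends: the lemma is stated without proof precisely because it is the word-for-word dual of Lemma~\ref{lem:localproperties}, and your case analysis (using Lemma~\ref{lem:mappingspace}, decomposability of weak equivalences, and the fact that weak equivalence classes are intervals) carries out that dualization correctly, with all inequalities reversed as needed.
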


The main purpose of both left and right localizations is to add weak equivalences to an existing model structure. The following result provides an existence result for these localizations.


\begin{proposition}[{\cite[Chapter 3.3]{hirschhorn}}] Let $\C$ be a combinatorial and proper model category, and let $\mathcal{W}$ be a set of morphisms in $\C$. 
\begin{itemize}
\item There is a model structure $L_\mathcal{W}\C$ on $\C$ such that 
\begin{itemize}
\item the weak equivalences of $L_\mathcal{W}\C$ are the $\mathcal{W}$-equivalences,
\item the cofibrations of $L_\mathcal{W}\C$ are the same as the cofibrations of $\C$.
\end{itemize}
We call $L_\mathcal{W}\C$ the \emph{left Bousfield localization of $\C$ with respect to $\mathcal{W}$}.
\item There is a model structure $R_\mathcal{W}\C$ on $\C$ such that 
\begin{itemize}
\item the weak equivalences of $R_\mathcal{W}\C$ are the $\mathcal{W}$-coequivalences,
\item the fibrations of $R_\mathcal{W}\C$ are the same as the fibrations of $\C$.
\end{itemize}
We call $R_\mathcal{W}\C$ the \emph{right Bousfield localization of $\C$ with respect to $\mathcal{W}$}.
\end{itemize}
\end{proposition}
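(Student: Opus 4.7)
The plan is to follow the classical construction due to Bousfield, Smith, and Hirschhorn. I will describe the left localization in detail; the right case proceeds by an entirely dual argument, with right properness playing the role of left properness.

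I would retain the cofibrations $\mathsf{C}$ of $\C$ and declare the weak equivalences of $L_\mathcal{W}\C$ to be the $\mathcal{W}$-equivalences $\mathsf{W}_L$. Since $\mathsf{W}\subseteq \mathsf{W}_L$ and $\mathsf{C} = {}^{\boxslash}(\mathsf{W}\cap\mathsf{F})$, the pair $(\mathsf{C},\mathsf{W}\cap\mathsf{F})$ already serves as one of the two required weak factorization systems for $L_\mathcal{W}\C$, so it remains to produce the second system $(\mathsf{C}\cap \mathsf{W}_L, \mathsf{F}_L)$. I would invoke Jeff Smith's recognition theorem for combinatorial model categories, which reduces the problem to exhibiting a small set $J_L$ of generating trivial cofibrations. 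Following Bousfield--Smith, I would set $J_L = J \cup \Lambda(\mathcal{W})$, where $J$ is the original set of generating trivial cofibrations and $\Lambda(\mathcal{W})$ is a set of horn-type inclusions of the form
\[
A \otimes \Delta^n \sqcup_{A \otimes \partial\Delta^n} B \otimes \partial\Delta^n \longrightarrow B \otimes \Delta^n
\]
built from cofibrant replacements of the maps in $\mathcal{W}$. These horns are designed so that an object is $J_L$-injective precisely when it is a fibrant $\mathcal{W}$-local object of $\C$.

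The main obstacle will be the Bousfield--Smith cardinality argument that identifies the cofibrations with the right lifting property against $J_L$ with the cofibrations that are also $\mathcal{W}$-equivalences. This rests on two crucial ingredients: accessibility of $\mathsf{W}_L$ as a full subcategory of the arrow category, where the combinatorial hypothesis enters in an essential way, and left properness, which guarantees that pushouts of $\mathcal{W}$-equivalences along cofibrations remain $\mathcal{W}$-equivalences. Checking 2-out-of-3 and closure under retracts for $\mathsf{W}_L$ is routine from the mapping-space definition. For the right Bousfield localization one dualizes throughout: keep the fibrations $\mathsf{F}$, declare the weak equivalences to be the $\mathcal{W}$-coequivalences, and apply the small object argument to a set of generating cofibrations built from cofibrant $\mathcal{W}$-colocal objects; right properness replaces left properness in the analogous Bousfield--Smith cardinality estimate.
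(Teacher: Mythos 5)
The paper does not prove this proposition at all --- it is quoted from Hirschhorn (and, for the combinatorial rather than cellular setting, implicitly from Smith/Barwick/Lurie) --- so there is no internal argument to compare against; your sketch of the left localization is the standard one and is essentially what the cited sources do: keep $\mathsf{C}$, note that the acyclic fibrations are then forced to be $\mathsf{C}^{\boxslash}=\mathsf{AF}$, build the second weak factorization system from $J\cup\Lambda(\mathcal{W})$, and use accessibility of the $\mathcal{W}$-equivalences plus left properness in the Bousfield--Smith cardinality step. Two caveats. First, your claim that $(\mathsf{C},\mathsf{W}\cap\mathsf{F})$ ``already serves'' as one of the two weak factorization systems conflates what is automatic (that $(\mathsf{C},\mathsf{AF})$ is a WFS) with what must be proved at the end, namely that $\mathsf{AF}$ coincides with $\mathsf{W}_L\cap\mathsf{F}_L$ once $\mathsf{F}_L$ has been constructed; this is part of, not a precondition for, the recognition argument. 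Second, and more seriously, the right localization is \emph{not} ``an entirely dual argument'': the small object argument does not dualize, so one cannot simply cofibrantly cogenerate. Hirschhorn's existence theorem for right Bousfield localization (Theorem 5.1.1) is a separate, non-dual proof that colocalizes at a \emph{set of objects} under right properness, and in the combinatorial setting the right localization is in general only known to be a ``right model category'' unless extra hypotheses hold. Since the paper only ever applies this to finite posets, where every class of maps is a finite set and all these set-theoretic and cogeneration issues evaporate, the statement as used is fine --- but your sketch should acknowledge that the right-hand bullet rests on a genuinely different argument rather than formal duality.
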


We see from the definitions of $L_\mathcal{W}\C$ and $R_\mathcal{W}\C$ that the identity gives a left Quillen functor
$\C \longrightarrow L_\mathcal{W}\C$
and a right Quillen functor
$\C \longrightarrow R_\mathcal{W}\C.$ Furthermore, the fibrant replacement in $L_\mathcal{W}\C$ gives a $\mathcal{W}$-equivalence between any object $X$ and a $\mathcal{W}$-local object $L_\mathcal{W}X$. Dually, the cofibrant replacement in $R_\mathcal{W}\C$ gives a $\mathcal{W}$-coequivalence between a colocal object $R_\mathcal{W}X$ and any object in $X$ in $\C$.

Left localization also satisfies the following universal property: if $F:\C \longrightarrow \mathcal{D}$ is a left Quillen functor sending $\mathcal{W}$ to weak equivalences in $\mathcal{D}$, then $F$ factors as a left Quillen functor $F: L_\mathcal{W}\C \longrightarrow \mathcal{D}.$ This implies the following.

\begin{corollary}\label{cor:wsmallleft}
Let $[n]$ be equipped with a model structure. Then the $\mathcal{W}$-equivalences and the $\mathcal{W}$-coequivalences are the smallest subcategory of $[n]$ satisfying decomposition and containing $\mathcal{W}$ and the weak equivalences of $[n]$. \hfill\qedsymbol
\end{corollary}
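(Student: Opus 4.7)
The plan is to establish the $\mathcal{W}$-equivalence statement via the universal property of left Bousfield localization recalled just above; the $\mathcal{W}$-coequivalence statement follows by the completely dual argument using right Bousfield localization and Lemma \ref{lem:colocalproperties}.

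One direction is immediate: by Lemma \ref{lem:localproperties}, the $\mathcal{W}$-equivalences are closed under 2-out-of-3 (hence composition) and decomposition, and they contain $\mathcal{W}$ and the original weak equivalences $\mathsf{W}$ by definition. So they contain the minimum such subcategory $\mathcal{V}$.

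For the reverse inclusion, I would build a comparison model structure $\mathcal{D}$ on $[n]$ with weak equivalences exactly $\mathcal{V}$ and with every morphism a cofibration. Because $\mathcal{V}$ is closed under composition and decomposition and contains $\mathsf{W}$, it corresponds to an interval partition of $[n]$ coarser than that of $\mathsf{W}$. Theorem \ref{thm:main} lets me extend this partition to a full model structure by equipping each block with the trivial transfer system (identities only); this yields $\mathsf{C} = \mathsf{AC} = $ all within-block morphisms and $\mathsf{AF} = $ identities on each block, so that globally the cofibrations of $\mathcal{D}$ comprise every morphism. The identity $[n] \to \mathcal{D}$ is then left Quillen: cofibrations are trivially preserved, and the original acyclic cofibrations, being original weak equivalences, lie in $\mathsf{W} \subseteq \mathcal{V}$. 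Since $\mathcal{W} \subseteq \mathcal{V}$, the universal property produces a left Quillen functor $L_\mathcal{W}[n] \to \mathcal{D}$.

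To conclude, take a $\mathcal{W}$-equivalence $g \colon A \to B$ and factor it in $L_\mathcal{W}[n]$ as $g = p \circ i$ with $i$ an acyclic cofibration and $p$ an acyclic fibration. Since the cofibrations of $L_\mathcal{W}[n]$ agree with those of the original, the same is true of the acyclic fibrations (both classes are characterised by the RLP against the shared cofibrations), so $p$ is an original weak equivalence and hence $p \in \mathcal{V}$; and the left Quillen functor to $\mathcal{D}$ sends the acyclic cofibration $i$ to an acyclic cofibration in $\mathcal{D}$, so $i \in \mathcal{V}$. Composition closure of $\mathcal{V}$ then gives $g \in \mathcal{V}$. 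The main thing requiring care is verifying that the trivial transfer system on each block yields a legitimate contractible model structure with every morphism a cofibration; this is a direct instance of Proposition \ref{fooqwresult} and is the only place where the specific combinatorial structure of $[n]$ enters the argument.
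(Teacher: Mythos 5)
Your proposal is correct and follows essentially the route the paper intends: one inclusion from Lemma~\ref{lem:localproperties} (resp.\ Lemma~\ref{lem:colocalproperties}), the other from the universal property of $L_\mathcal{W}$ (resp.\ $R_\mathcal{W}$) stated just before the corollary. The paper leaves the construction of the comparison model structure implicit, and your choice of $\mathcal{D}$ (the interval partition of $\mathcal{V}$ with the identity-only transfer system on each block, so that $\mathsf{C}=\mathsf{All}$) together with the factorization $g=p\circ i$ in $L_\mathcal{W}[n]$ supplies exactly the missing details, correctly.
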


Our goal in this section is to show that every model structure on $[n]$ can be obtained from the trivial one via a sequence of left and right Bousfield localizations. 

We know from Corollary \ref{cor:transferaf} that a model structure on $[n]$ is uniquely determined by a selection of contractible submodels, that is, by an interval partition and the choice of a transfer system on each component of the partition.

As the elements of the transfer system provide the acyclic fibrations of the model structure, to obtain our main result of this section we therefore start by considering the effect of left and right localizations on the acyclic fibrations. In particular, we shall explore what happens to the acyclic fibrations when left and right localizing at the single map $i\rightarrow i+1$. We denote the corresponding localization and colocalization by
\[
L_i = L_{\{i \longrightarrow i+1\}} \,\,\,\,\mbox{resp.,}\,\,\,\,R_i=R_{\{i \longrightarrow i+1\}}.
\]
By Corollary \ref{cor:wsmallleft} we know that the only new weak equivalence resulting from the above is indeed $i \rightarrow i+1$ and any required composites. In particular we have the following.

\begin{corollary}\label{cor:intpartonbous}
Let $\C$ be a model structure on $[n]$ with corresponding interval partition $$\pp = [0,a_1] \amalg [a_1+1,a_2] \amalg \cdots \amalg [a_k+1,n].$$ Suppose $0 \leq i < n.$ Then the interval partition $\pp'$ for the model structures $L_i\C$ and $R_i\C$ are as follows.
\begin{itemize}
    \item If $i \longrightarrow i+1$ is already a weak equivalence then $\pp' = \pp$.
    \item If $i \longrightarrow i+1$ is not a weak equivalence then in $\pp$ we can find $\cdots \amalg [m,i] \amalg [i+1,j] \amalg \cdots$ (where it is possible that $m=i$ or $i+1 = j$).  Then $\pp'$ is obtained from $\pp$ by replacing this block with $[m,j]$. \hfill\qedsymbol
\end{itemize}
\end{corollary}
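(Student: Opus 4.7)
The plan is to reduce the computation to Corollary~\ref{cor:wsmallleft}, which characterizes the weak equivalences of $L_i\C$ (and dually of $R_i\C$) as the smallest decomposable subcategory of $[n]$ containing the original weak equivalences $\mathsf{W}$ together with the morphism $i \to i+1$. Since a model structure on $[n]$ is determined on the level of weak equivalences by its interval partition, it suffices to exhibit this smallest decomposable closure as the set of weak equivalences associated with the partition $\pp'$ described in the statement. In particular, the same answer will automatically cover both the left and right localizations.

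In the first bullet the morphism $i \to i+1$ already lies in $\mathsf{W}$, so the closure equals $\mathsf{W}$ and $\pp' = \pp$ with nothing further to check.

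For the second bullet, the hypothesis that $i \to i+1$ is not a weak equivalence forces $i$ and $i+1$ to lie in distinct adjacent blocks of $\pp$, necessarily of the form $[m,i]$ and $[i+1,j]$. I would then verify two matching inclusions with the partition $\pp'$ obtained by merging these blocks into $[m,j]$. On the one hand, the weak equivalences associated with $\pp'$ form a decomposable collection (immediate from the shape of an interval partition) that plainly contains both $\mathsf{W}$ and the new morphism $i \to i+1$, so it contains the minimal closure. On the other hand, every morphism $a \to b$ with $m \le a \le b \le j$ factors as
\[
a \longrightarrow i \longrightarrow i+1 \longrightarrow b,
\]
where the outer two maps are weak equivalences of $\C$ (they stay inside the original blocks $[m,i]$ and $[i+1,j]$) and the middle map is the newly adjoined morphism; hence $a \to b$ belongs to the closure, giving the reverse inclusion.

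The argument for $R_i\C$ is identical, invoking Lemma~\ref{lem:colocalproperties} in place of Lemma~\ref{lem:localproperties} within the application of Corollary~\ref{cor:wsmallleft}. The only mild subtlety I expect is confirming that the closure adjoins no morphism outside the merged block $[m,j]$, but this is automatic from the minimality statement once the weak equivalences of $\pp'$ have been exhibited as a decomposable collection containing $\mathsf{W} \cup \{i \to i+1\}$.
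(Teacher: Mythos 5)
Your proposal is correct and takes essentially the same route as the paper: the paper deduces this corollary directly from Corollary~\ref{cor:wsmallleft} (the weak equivalences of $L_i\C$ and $R_i\C$ form the smallest decomposable class containing $\mathsf{W}$ and $i\to i+1$), and you simply make the two resulting inclusions explicit. The only cosmetic slip is that the factorization $a\to i\to i+1\to b$ applies only when $a\le i<i+1\le b$; the remaining morphisms of the merged block $[m,j]$ already lie in $\mathsf{W}$ and hence in the closure trivially.
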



\begin{proposition}\label{lem:newaf}
Let $\C$ be a model structure on $[n]$ with corresponding interval partition $\pp = [0,a_1] \amalg [a_1+1,a_2] \amalg \cdots \amalg [a_k+1,n]$. Suppose $0 \leq i < n$ and that $i \longrightarrow i+1$ is not a weak equivalence. Then the model structures $L_i\C$ and $R_i\C$ are characterised as follows.
\begin{itemize}
\item The weak equivalences for $L_i \C$ are described by the interval partition $\pp'$ as above. The acyclic fibrations of $L_i\C$ are the same as the acyclic fibrations of $\C$.
\item The weak equivalences for $R_i \C$ are described by the interval partition $\pp'$ as above. The acyclic fibrations of $R_i\C$ are the acyclic fibrations of $\C$ with the addition of the arrows $m' \rightarrow j'$, where
\[
i < j' \leq j,
\]
and where $m' \rightarrow i$ are acyclic fibrations in the old model structure.
In particular, all the arrows of the form $i \rightarrow j'$, $i< j' \leq j$ are new acyclic fibrations.

\end{itemize}
\end{proposition}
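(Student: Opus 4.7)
My plan is to handle $L_i\C$ and $R_i\C$ separately, in each case invoking Corollary~\ref{cor:intpartonbous} for the weak equivalences and then computing the acyclic fibrations via lifting. Existence of both localizations is immediate from Hirschhorn's theorem together with the properness and combinatoriality of every model structure on $[n]$ proved in Section~\ref{sec:properties}.

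For $L_i\C$, the cofibrations are preserved by definition of left Bousfield localization, and in any model structure the acyclic fibrations are recovered as $\mathsf{C}^{\boxslash}$ by Remark~\ref{rem:overdetermined}. It follows immediately that the acyclic fibrations of $L_i\C$ coincide with those of $\C$.

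For $R_i\C$, the fibrations are preserved by definition, so the acyclic fibrations are precisely those morphisms which are fibrations in $\C$ and also weak equivalences in $R_i\C$. The old acyclic fibrations certainly persist. Any genuinely new acyclic fibration must be a fibration in $\C$ whose endpoints lie in the merged block $[m,j]$ of $\pp'$ but in separate blocks of $\pp$, so it must be of the form $m' \to j'$ with $m \leq m' \leq i$ and $i+1 \leq j' \leq j$. The central task is therefore to show that such an $m' \to j'$ is a fibration in $\C$ if and only if $m' \to i$ is an acyclic fibration of $\C$; taking $m' = i$ then recovers the ``in particular'' clause.

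I would prove this equivalence by lifting arguments in both directions. For the forward direction, if $m' \to i$ is not an acyclic fibration in $\C$ then some acyclic cofibration $A \to B$ with $A \leq m' < B \leq i$ obstructs the lift against $m' \to i$; the same square with codomain enlarged to $j'$ then obstructs the lift against $m' \to j'$, so the latter fails to be a fibration. For the converse, assume $m' \to i$ is an acyclic fibration of $\C$ and suppose some acyclic cofibration $A \to B$ obstructs the lift against $m' \to j'$, so $A \leq m' < B \leq j'$. By Lemma~\ref{lem:decomp}, $A$ and $B$ share a block of $\pp$; this rules out $B \geq i+1$, since otherwise decomposability applied to the weak equivalence $A \to B$ through the subchain $i \to i+1$ would force $i \to i+1$ to be a weak equivalence in $\C$, contrary to hypothesis. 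Hence $B \leq i$, and MC4 applied to the square against $m' \to i$ produces a lift $B \to m'$, contradicting $m' < B$. The main obstacle is precisely this bidirectional case analysis, which hinges on the interplay between the decomposability of weak equivalences and the transfer-system data governing fibrations inside the block $[m,i]$.
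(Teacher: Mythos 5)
Your proposal is correct and takes essentially the same route as the paper: both bullet points rest on the definitional preservation of cofibrations (resp.\ fibrations) under left (resp.\ right) localization, on Corollary~\ref{cor:intpartonbous} for the new weak equivalences, and on an RLP computation whose key input is that decomposability forbids any acyclic cofibration of $\C$ from crossing the boundary between the blocks $[m,i]$ and $[i+1,j]$. The only organizational difference is that the paper first shows the single map $i\to j$ is a fibration of $\C$ and then generates the remaining new acyclic fibrations by composition with old acyclic fibrations $m'\to i$ and restriction, whereas you prove the biconditional ``$m'\to j'$ is a fibration of $\C$ if and only if $m'\to i$ is an acyclic fibration of $\C$'' directly for each pair $(m',j')$.
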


\begin{proof}
We start with considering $L_i$. The claim about the interval partition is the subject of Corollary~\ref{cor:intpartonbous}. As a left Bousfield localization does not change the cofibrations, it also does not change the acyclic fibrations (which are determined via lifting from the cofibrations). One observes that the we therefore require the transfer system on the block $[m,j]$ to be the disjoint union of the transfer systems on $[m,i]$ and $[i+1,j]$. This is indeed a transfer system as required.

Now let us look at $R_i$. With the previous notation, we would like to determine the acyclic fibrations on the block $[m ,j]$. By definition, any acyclic fibration between $m$ and $i$ and between $i+1$ and $j$ after right localization must have also been an acyclic fibration before right localization.

Let us begin with finding the largest $\ell \ge i$ such that $i \rightarrow \ell$ is a new acyclic fibration.

We will show that $\ell=j$ by proving that $i \rightarrow j$ was a fibration in the old model structure before right localization (and thus is an acyclic fibration after right localization). 
We show that the map $i \rightarrow j$ has the right lifting property with respect to all acyclic cofibrations in the old model structure. By writing out the lifting square, this translates to there being no acyclic cofibration $a \rightarrow b$ such that
\[
a \leq i < b \leq j.
\]
And indeed there is not --- if there were, then $a$ and $b$ would be in different blocks of weak equivalences before right localization. But as $a \rightarrow b$ is in particular a weak equivalence, this would be a contradiction. 
Thus we can conclude that $i \rightarrow j$ is a fibration in the old model structure and therefore an acyclic fibration in $R_i\C$.
This means that $R_i$ adds the edges $i \rightarrow j'$ for all $j'$ between $i$ and $j$ to the acyclic fibrations of $\C$.

In addition to this, any acyclic fibrations $m' \rightarrow i$ will create some new fibrations via composing with the $i \rightarrow j'$ and then applying restrictions. Conversely, note that if $m'\to j'$ is an acyclic fibration after localization, then so is $m'\to i$ by restriction, and hence $m'\to i$ must have been an acyclic fibration before localization. Therefore, all in all, the new acyclic fibrations are precisely the maps $m' \rightarrow j'$, where $i+1 \leq j' \leq j$, and $m' \rightarrow i$ is an acyclic fibration in the old model structure. 
\end{proof}

Recall the operation $\odot$ on transfer systems as discussed in Remark~\ref{rem:odotoperation} which allows us to inductively build any transfer system. The key point is that the Bousfield localizations $L_i$ and $R_i$ allow us to model this operation. The following lemma can be seen from comparing the description in Proposition~\ref{lem:newaf} with the description of $\odot$ in \cite{bbr}. 

\begin{lemma}\label{lem:itisodot}
Let $\C$ be a model structure on $[n]$. Assume the corresponding interval partition contains 
\[\cdots \amalg [m,i-1] \amalg [i,i] \amalg [i+1,j] \amalg \cdots. \]
Then, using the notation $\mathsf{tr}([a,b])$ for the corresponding transfer system on $[a,b]$, we have the following.
\begin{itemize}
    \item $L_{i-1} \C$ models $\mathsf{tr}([m,i-1]) \odot \varnothing$ to obtain a transfer system on $[m,i]$.
    \item $R_{i} \C$ models $\varnothing \odot \mathsf{tr}([i+1,j])$ to obtain a transfer system on $[i,j]$.
    \item $L_{i-1}R_{i}\C$ models $\mathsf{tr}([m,i-1]) \odot \mathsf{tr}([i+1,j])$ to obtain a transfer system on $[m,j]$.\hfill\qedsymbol
\end{itemize}

\end{lemma}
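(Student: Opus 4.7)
The plan is to compare the explicit description of new acyclic fibrations provided by Proposition~\ref{lem:newaf} with the combinatorial recipe for $X \odot Y$ recalled in Remark~\ref{rem:odotoperation}. By Corollary~\ref{cor:transferaf}, a contractible submodel on a finite total order is uniquely determined by its transfer system of acyclic fibrations, so in each of the three bullets it suffices to identify the transfer system on the newly merged block of the localized model structure with the claimed $\odot$-product.

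For the first bullet, Corollary~\ref{cor:intpartonbous} tells us that $L_{i-1}$ merges $[m,i-1]$ and $[i,i]$ into $[m,i]$, and Proposition~\ref{lem:newaf} asserts that $L_{i-1}$ preserves acyclic fibrations verbatim. Hence the transfer system on $[m,i]$ is exactly $\mathsf{tr}([m,i-1])$ with $i$ sitting as an isolated top vertex --- precisely $\mathsf{tr}([m,i-1]) \odot \varnothing$, since in that operation the pivot emits no arrows when $Y$ is empty. For the second bullet, Proposition~\ref{lem:newaf} states that the transfer system on the merged block $[i,j]$ is $\mathsf{tr}([i+1,j])$ together with all arrows $m' \to j'$ for $i < j' \leq j$ such that $m' \to i$ was already an acyclic fibration of $\C$. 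Because $[i,i]$ was a singleton block of $\C$, the only such $m'$ is $i$ itself, producing exactly the pivot arrows $i \to j'$ for $i < j' \leq j$ in addition to $\mathsf{tr}([i+1,j])$; this matches $\varnothing \odot \mathsf{tr}([i+1,j])$ on the nose.

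For the third bullet, I would apply $R_i$ first and then $L_{i-1}$ to $R_i\C$. The interval partition of $R_i\C$ contains $\cdots \amalg [m,i-1] \amalg [i,j] \amalg \cdots$, with transfer systems $\mathsf{tr}([m,i-1])$ and $\varnothing \odot \mathsf{tr}([i+1,j])$ respectively by the second bullet. Since $L_{i-1}$ preserves acyclic fibrations, the transfer system of $L_{i-1}R_i\C$ on the newly merged block $[m,j]$ is the union of these two arrow sets: $\mathsf{tr}([m,i-1])$ together with the pivot arrows $i \to j'$ for $i < j' \leq j$ together with $\mathsf{tr}([i+1,j])$, which is by definition $\mathsf{tr}([m,i-1]) \odot \mathsf{tr}([i+1,j])$. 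The one point requiring care is confirming that no additional composite arrows bridging $[m,i-1]$ and $[i+1,j]$ must be adjoined to satisfy the transfer system axioms, but the union contains no arrow into the vertex $i$ from below and no arrow out of $i-1$ going above, so no new composites arise in the closure and the two descriptions agree.
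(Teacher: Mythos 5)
Your proof is correct and takes essentially the same approach as the paper: the paper gives no separate proof, remarking only that the lemma ``can be seen from comparing the description in Proposition~\ref{lem:newaf} with the description of $\odot$,'' and your bullet-by-bullet comparison (including the observation that the singleton block $[i,i]$ forces $m'=i$ in the $R_i$ case, and that the disjoint union plus pivot arrows is already closed under the transfer system axioms) is exactly that comparison made explicit.
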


We now have all of the ingredients to state and prove the main result of this section.

\begin{theorem}\label{thm:bousfieldlocn}
Every model structure on $[n]$ can be obtained by a sequence of left and right Bousfield localizations of the form $L_i$ and $R_i$.
\end{theorem}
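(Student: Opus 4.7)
The plan is to induct on $n$, splitting the inductive step according to whether the interval partition of $\C$ consists of a single block or multiple blocks. The base case $n=0$ is immediate since only the trivial model structure exists on $[0]$. For $n \geq 1$, the multi-block case will reduce directly to induction on strictly smaller sublattices, while the contractible (single-block) case will reduce to the multi-block case using the $\odot$-decomposition of transfer systems provided by Proposition~\ref{prop:build}, then realised by a pair $L_i R_{i+1}$ of localizations via Lemma~\ref{lem:itisodot}.

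For the \emph{multi-block case}, suppose the interval partition of $\C$ has first block $[0, a_1]$ with $a_1 < n$. Restriction yields model structures $\C_1$ on $[0, a_1] \cong [a_1]$ and $\C_2$ on $[a_1+1, n] \cong [n-a_1-1]$, each on a strictly smaller lattice, so by the inductive hypothesis each may be built from its respective trivial structure via a sequence of localizations $L_k, R_k$. The crucial observation is that a localization $L_k$ or $R_k$ with $k < a_1$ applied on $[n]$ has the same effect on $[0, a_1]$ as the intrinsic localization on $[a_1]$: by Corollary~\ref{cor:intpartonbous} and Proposition~\ref{lem:newaf}, the only blocks that merge are those containing $k$ and $k+1$, both of which lie in $[0, a_1]$; and no new acyclic fibrations cross the boundary $a_1 \to a_1 + 1$ so long as $\{a_1+1\}$ remains a singleton. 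A symmetric statement holds for indices $k \geq a_1+1$. Starting from the trivial model structure on $[n]$, I first perform the lifted sequence for $\C_1$ (using only indices $k < a_1$) and then the lifted sequence for $\C_2$ (using only indices $k \geq a_1+1$); avoiding index $k = a_1$, which would merge across the partition boundary, gives $\C$.

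For the \emph{contractible case}, $\C$ is determined by a transfer system $T \in \operatorname{Tr}([n])$. When $n = 1$, either $T = \varnothing$ (reached via $L_0$ from trivial) or $T = \{0 \to 1\}$ (reached via $R_0$). When $n \geq 2$, Proposition~\ref{prop:build} decomposes $T = X \odot Y$ uniquely for some $X \in \operatorname{Tr}([i])$, $Y \in \operatorname{Tr}([j])$ with $n = i + j + 2$. I then consider the auxiliary model structure $\C'$ on $[n]$ whose interval partition is $[0, i] \amalg \{i+1\} \amalg [i+2, n]$, carrying the transfer systems $X$, $\varnothing$, and the shift of $Y$ respectively. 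Since $\C'$ has multiple blocks and both relevant sublattices $[0,i]$ and $[i+2, n]$ are strictly smaller than $[n]$, the multi-block argument of the previous paragraph constructs $\C'$ from trivial via localizations. Finally, Lemma~\ref{lem:itisodot} guarantees that applying $L_i R_{i+1}$ to $\C'$ will yield precisely $\C$.

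The main obstacle will be the bookkeeping required to justify lifting localization sequences from a sublattice to $[n]$ without unintended side effects; everything else is a clean structural induction. This is controlled by the explicit block-merging and acyclic-fibration descriptions in Corollary~\ref{cor:intpartonbous} and Proposition~\ref{lem:newaf}, which together show that the ``reach'' of a localization $L_k$ or $R_k$ is confined to the pair of blocks adjacent to the edge $k \to k+1$, and hence to the sublattice we intend to operate within.
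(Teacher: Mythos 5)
Your proposal is correct and rests on the same engine as the paper's proof: the unique $\odot$-decomposition of transfer systems (Proposition~\ref{prop:build}) realized by a composite $L_iR_{i+1}$ via Lemma~\ref{lem:itisodot}. The difference is organizational. The paper inducts on the maximal block size of the interval partition and leaves implicit both how one assembles the blocks and why a localization aimed at one block does not disturb the others; you instead induct on $n$, split into a multi-block case and a contractible case, and justify explicitly — via Corollary~\ref{cor:intpartonbous} and Proposition~\ref{lem:newaf} — that the effect of $L_k$ and $R_k$ is confined to the two blocks adjacent to the edge $k\to k+1$, so that localization sequences for the restricted model structures lift to $[n]$ without side effects. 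This lifting argument is exactly the bookkeeping the paper elides, so your version is, if anything, more complete. Two small points: the clause ``so long as $\{a_1+1\}$ remains a singleton'' is unnecessary (the new acyclic fibrations produced by $R_k$ with $k<a_1$ live entirely in the merged block, which sits inside $[0,a_1]$ regardless); and in the contractible case you should note the degenerate decompositions where $X$ or $Y$ is empty (e.g.\ $i=-1$ or $j=-1$), in which case the auxiliary partition has only two blocks and one applies only $R_{i+1}$ or only $L_i$, exactly as in the first two bullets of Lemma~\ref{lem:itisodot} and consistent with your $n=1$ base case.
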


\begin{proof}

We will argue by induction on the maximal size of the blocks in weak equivalences. If that size is 1, then we have the trivial model structure. Let us assume that we can obtain any transfer system on any choice of contractible submodels with a fixed maximal block size. If we would like to create a transfer system on a bigger block, then this transfer system can be obtained via the $\odot$ operation from Lemma~\ref{lem:itisodot}, which allows us to iteratively build larger transfer systems from smaller ones (see Proposition~\ref{prop:build}). 
\end{proof}

\begin{example}
Let us return to our usual example of $[2]$. Below are two composites of a left and right localization, which, in particular, display the fact that one cannot in general commute left and right localizations.
\[
\begin{gathered}
\begin{tikzpicture}
\node (pol) [draw=none, minimum size=2.5cm, regular polygon, regular polygon sides=3] at (0,0) {};
\foreach \n [count=\nu from 0, remember=\n as \lastn, evaluate={\nu+\lastn}] in {1,2,...,3}
\node[anchor=\n*(360/3)-180] at (pol.corner \n){};
\foreach \n in {1,2,...,3}
\draw[fill = black] (pol.corner \n) circle (2pt);

\draw[right hook->>, shorten >=3mm, shorten <=3mm](pol.corner 1) -- (pol.corner 2);

\draw[right hook->>, shorten >=3mm, shorten <=3mm](pol.corner 1) -- (pol.corner 3);

\draw[right hook->>, shorten >=3mm, shorten <=3mm](pol.corner 2) -- (pol.corner 3);
\end{tikzpicture}
\end{gathered}
\xrightarrow[\hspace{5em}]{L_1}
\begin{gathered}
\begin{tikzpicture}
\node (pol) [draw=none, minimum size=2.5cm, regular polygon, regular polygon sides=3] at (0,0) {};
\foreach \n [count=\nu from 0, remember=\n as \lastn, evaluate={\nu+\lastn}] in {1,2,...,3}
\node[anchor=\n*(360/3)-180] at (pol.corner \n){};
\foreach \n in {1,2,...,3}
\draw[fill = black] (pol.corner \n) circle (2pt);

\draw[right hook->>, shorten >=3mm, shorten <=3mm](pol.corner 1) -- (pol.corner 2);

\draw[right hook->>, shorten >=3mm, shorten <=3mm](pol.corner 1) -- (pol.corner 3);

\draw[right hook->, shorten >=3mm, shorten <=3mm](pol.corner 2) -- (pol.corner 3) node[midway,sloped,below]{$\sim$};
\end{tikzpicture}
\end{gathered}
\xrightarrow[\hspace{5em}]{R_0}
\begin{gathered}
\begin{tikzpicture}
\node (pol) [draw=none, minimum size=2.5cm, regular polygon, regular polygon sides=3] at (0,0) {};
\foreach \n [count=\nu from 0, remember=\n as \lastn, evaluate={\nu+\lastn}] in {1,2,...,3}
\node[anchor=\n*(360/3)-180] at (pol.corner \n){};
\foreach \n in {1,2,...,3}
\draw[fill = black] (pol.corner \n) circle (2pt);

\draw[->>, shorten >=3mm, shorten <=3mm](pol.corner 1) -- (pol.corner 2) node[midway,sloped,above]{$\sim$};

\draw[->>, shorten >=3mm, shorten <=3mm](pol.corner 1) -- (pol.corner 3) node[midway,sloped,above]{$\sim$};

\draw[right hook->, shorten >=3mm, shorten <=3mm](pol.corner 2) -- (pol.corner 3) node[midway,sloped,below]{$\sim$};
\end{tikzpicture}
\end{gathered}
\]

\[
\begin{gathered}
\begin{tikzpicture}
\node (pol) [draw=none, minimum size=2.5cm, regular polygon, regular polygon sides=3] at (0,0) {};
\foreach \n [count=\nu from 0, remember=\n as \lastn, evaluate={\nu+\lastn}] in {1,2,...,3}
\node[anchor=\n*(360/3)-180] at (pol.corner \n){};
\foreach \n in {1,2,...,3}
\draw[fill = black] (pol.corner \n) circle (2pt);

\draw[right hook->>, shorten >=3mm, shorten <=3mm](pol.corner 1) -- (pol.corner 2);

\draw[right hook->>, shorten >=3mm, shorten <=3mm](pol.corner 1) -- (pol.corner 3);

\draw[right hook->>, shorten >=3mm, shorten <=3mm](pol.corner 2) -- (pol.corner 3);
\end{tikzpicture}
\end{gathered}
\xrightarrow[\hspace{5em}]{R_0}
\begin{gathered}
\begin{tikzpicture}
\node (pol) [draw=none, minimum size=2.5cm, regular polygon, regular polygon sides=3] at (0,0) {};
\foreach \n [count=\nu from 0, remember=\n as \lastn, evaluate={\nu+\lastn}] in {1,2,...,3}
\node[anchor=\n*(360/3)-180] at (pol.corner \n){};
\foreach \n in {1,2,...,3}
\draw[fill = black] (pol.corner \n) circle (2pt);

\draw[->>, shorten >=3mm, shorten <=3mm](pol.corner 1) -- (pol.corner 2) node[midway,sloped,above]{$\sim$};

\draw[right hook->>, shorten >=3mm, shorten <=3mm](pol.corner 1) -- (pol.corner 3);

\draw[right hook->>, shorten >=3mm, shorten <=3mm](pol.corner 2) -- (pol.corner 3);
\end{tikzpicture}
\end{gathered}
\xrightarrow[\hspace{5em}]{L_1}
\begin{gathered}
\begin{tikzpicture}
\node (pol) [draw=none, minimum size=2.5cm, regular polygon, regular polygon sides=3] at (0,0) {};
\foreach \n [count=\nu from 0, remember=\n as \lastn, evaluate={\nu+\lastn}] in {1,2,...,3}
\node[anchor=\n*(360/3)-180] at (pol.corner \n){};
\foreach \n in {1,2,...,3}
\draw[fill = black] (pol.corner \n) circle (2pt);

\draw[->>, shorten >=3mm, shorten <=3mm](pol.corner 1) -- (pol.corner 2) node[midway,sloped,above]{$\sim$};

\draw[right hook->, shorten >=3mm, shorten <=3mm](pol.corner 1) -- (pol.corner 3) node[midway,sloped,above]{$\sim$};

\draw[right hook->, shorten >=3mm, shorten <=3mm](pol.corner 2) -- (pol.corner 3) node[midway,sloped,below]{$\sim$};
\end{tikzpicture}
\end{gathered}
\]
Figure~\ref{2quillen} at the end of this paper is a diagram of all 10 of the model structures on $[2]$ and the left Quillen functors between them. From this one can identify the required left and right Bousfield localizations starting from the trivial model structure to acquire the remaining model structures.
\end{example}

\begin{remark}
We have described a way of adding in single weak equivalences at a time using left and right localizations. One may hope that a single left Bousfield localization followed by a single right Bousfield localization (or the other way around) may do the trick. 

One can verify that there are $2 \cdot 3^n$ possible ways of obtaining model structures on $[n]$ using only two localizations. One then observes that $2 \cdot 3^n < \binom{2n+1}{n}$ for $n > 5$, so by a cardinality argument such a construction cannot work. As such we are forced into a position where we have to use zig-zags of localizations.

For a particular instance of this phenomenon consider the following model structure on $[3]$.

\[
\begin{tikzpicture}[scale=1.25]
\node (pol) [draw=none, minimum size=3.5cm, regular polygon, regular polygon sides=4] at (0,0) {};
\foreach \n [count=\nu from 0, remember=\n as \lastn, evaluate={\nu+\lastn}] in {1,2,...,4}
\node[anchor=\n*(360/4)-180] at (pol.corner \n){};
\foreach \n in {1,2,...,4}
\draw[fill = black] (pol.corner \n) circle (2pt);

\draw[->>, shorten >=3mm, shorten <=3mm](pol.corner 1) -- (pol.corner 2) node[midway,sloped,above]{$\sim$};

\draw[->>, shorten >=3mm, shorten <=3mm](pol.corner 1) -- (pol.corner 3) node[near start,sloped,above]{$\sim$};

\draw[right hook->, shorten >=3mm, shorten <=3mm](pol.corner 1) -- (pol.corner 4) node[midway,sloped,above]{$\sim$};

\draw[right hook->, shorten >=3mm, shorten <=3mm](pol.corner 2) -- (pol.corner 3) node[midway,sloped,below]{$\sim$};

\draw[shorten >=3mm, shorten <=3mm,line width=8pt, white](pol.corner 2) -- (pol.corner 4) ;

\draw[right hook->, shorten >=3mm, shorten <=3mm](pol.corner 2) -- (pol.corner 4) node[near start,sloped,above]{$\sim$};

\draw[right hook->, shorten >=3mm, shorten <=3mm](pol.corner 3) -- (pol.corner 4) node[midway,sloped,below]{$\sim$};
\end{tikzpicture}
\]
This model structure can be obtained via the composite of localizations $L_2R_0L_1$ applied to the trivial model structure. It is instructive to verify that this model structure cannot be obtained as $L_iR_j$ or $R_jL_i$ for any $i,j$.
\end{remark}

\section{Further directions}\label{sec:further}
We conclude with a list of further directions and questions that would advance our understanding of homotopical combinatorics.
\begin{enumerate}
\item Address Remark~\ref{rmk:shapiro} regarding model structures on $[n]$ and pairs of non-intersecting east-north lattice paths. Separately or in parallel, provide a conceptual explanation for the recurrence relation
\[
  Q_{n,k} = Q_{n-1,k-1}+2Q_{n-1,k}+Q_{n-1,k+1}
\]
where $Q_{n,k}$ is the number of model structures on $[n]$ with homotopy category isomorhic to $[k]$.
\item We have seen that premodel structures $P([n])$ are in bijection with the interval poset of the Tamari lattice, but that $Q([n])$ is only a tiny piece of $P([n])$. Is there a natural way to identify the Tamari intervals corresponding to model structures on $[n]$? What structure does $Q([n])$ have as a subposet of $P([n])$?
\item Relatedly, in \cite{intervals}, a bijection between Tamari intervals and  \emph{triangulations} (rooted planar maps in which all faces have three vertices) of a fixed size is given. Is there a direct bijection between $P([n])$ and triangulations which identifies $Q([n])$ with a special class of triangulations?
\item In Remark~\ref{rmk:bijection} we described a bijection between $Q([n])$ and $\End([n])$. It would be instructive to understand this bijection further, and to see what structures are preserved. For example, $\End([n])$ is a monoid under composition; does this monoid structure translate to a natural monoid structure on $Q([n])$?
\item Following Remark~\ref{rmk:allfib}, we can interpret the results of \cite{hmoo} as an enumeration of model structures on $[m]\times [n]$ for which all morphisms are fibrations. Indeed, there are
\[
  \sum_{j=2}^{m+2}(-1)^{m-j}\begin{Bmatrix}m+1\\j-1\end{Bmatrix}\frac{j!}{2}j^n
\]
such model structures, where $\begin{Bmatrix}k\\\ell\end{Bmatrix}$ is the Stirling number of the second kind counting $\ell$ block partitions of a set with cardinality $k$. What is the full structure of $Q([m]\times [n])$?
\item For what other lattices $\mathcal P$ can one enumerate or determine structural properties of $Q(\mathcal{P})$?
\item Extend the results of Section~\ref{sec:bousfield} to other lattices. For which lattices are all model structures connected by a zig-zag of left- and right-Bousfield localizations?

\item By the construction at \cite[\href{https://kerodon.net/tag/02MC}{Tag 02MC}]{kerodon}, every $(\infty,1)$-category is the localization of a poset category $\mathcal{P}$ at a set of morphisms $\mathsf{W}$. By \cite[Theorem B]{dz21}, we have a criteria under when such a pair $(\mathcal{P},\mathsf{W})$ extends to a model structure. It would be of interest to determine structural and enumerative properties of the collection of model structures on lattices (or on a particular lattice) which present a given locally presentable $(\infty,1)$-category. 

\end{enumerate}




 \bibliography{quillen}\bibliographystyle{alpha}

\newcommand{\etalchar}[1]{$^{#1}$}
\begin{thebibliography}{HMOO22}

\bibitem[BAC]{tobyomar}
T.~Barthel and O.~Antol\'{i}n-Camarena.
\newblock The nine model category structures on the category of sets.
\newblock \url{https://www.matem.unam.mx/~omar/notes/modelcatsets.html}.

\bibitem[Bal21]{handbook}
S.~Balchin.
\newblock {\em A Handbook of Model Categories}, volume~27 of {\em Algebra and
  Applications}.
\newblock Springer, 2021.

\bibitem[Bar20]{barton}
R.~W. Barton.
\newblock {A model 2-category of enriched combinatorial premodel categories}.
\newblock \href{https://arxiv.org/abs/2004.12937}{arXiv:2004.12937v1}, 2020.

\bibitem[BB09]{intervals}
O.~Bernardi and N.~Bonichon.
\newblock Intervals in {C}atalan lattices and realizers of triangulations.
\newblock {\em J. Combin. Theory Ser. A}, 116(1):55--75, 2009.

\bibitem[BBR21]{bbr}
S.~Balchin, D.~Barnes, and C.~Roitzheim.
\newblock {$N_\infty$}-operads and associahedra.
\newblock {\em Pacific Journal of Mathematics}, 2021.

\bibitem[Bek10]{beke}
T.~Beke.
\newblock Fibrations of simplicial sets.
\newblock {\em Appl. Categ. Structures}, 18(5):505--516, 2010.

\bibitem[BH15]{blumberghill}
A.~J. Blumberg and M.~A. Hill.
\newblock Operadic multiplications in equivariant spectra, norms, and
  transfers.
\newblock {\em Adv. Math.}, 285:658--708, 2015.

\bibitem[Cha07]{chapoton}
F.~Chapoton.
\newblock Sur le nombre d'intervalles dans les treillis de {T}amari.
\newblock {\em S\'{e}m. Lothar. Combin.}, 55:Art. B55f, 18, 2005/07.

\bibitem[DS95]{dwyerspa}
W.~G. Dwyer and J.~Spali\'{n}ski.
\newblock Homotopy theories and model categories.
\newblock In {\em Handbook of algebraic topology}, pages 73--126.
  North-Holland, Amsterdam, 1995.

\bibitem[DZ21]{dz21}
J.-M. Droz and I.~Zakharevich.
\newblock Extending to a model structure is not a first-order property.
\newblock {\em New York J. Math.}, 27:319--348, 2021.

\bibitem[FOO{\etalchar{+}}21]{fooqw}
E.~E. Franchere, K.~Ormsby, A.~M. Osorno, W.~Qin, and R.~Waugh.
\newblock Self-duality of the lattice of transfer systems via weak
  factorization systems.
\newblock {\em Homology, Homotopy and Applications}, 2021.

\bibitem[Hir03]{hirschhorn}
P.~S. Hirschhorn.
\newblock {\em Model categories and their localizations}, volume~99 of {\em
  Mathematical Surveys and Monographs}.
\newblock American Mathematical Society, Providence, RI, 2003.

\bibitem[HMOO22]{hmoo}
U.~Hafeez, P.~Marcus, K.~Ormsby, and A.~M. Osorno.
\newblock Saturated and linear isometric transfer systems for cyclic groups of
  order {$p^mq^n$}.
\newblock {\em Topology Appl.}, 317:Paper No. 108162, 20, 2022.

\bibitem[Hov99]{hovey}
M.~Hovey.
\newblock {\em Model categories}, volume~63 of {\em Mathematical Surveys and
  Monographs}.
\newblock American Mathematical Society, Providence, RI, 1999.

\bibitem[JT07]{JT}
A.~Joyal and M.~Tierney.
\newblock Quasi-categories vs {S}egal spaces.
\newblock In {\em Categories in algebra, geometry and mathematical physics},
  volume 431 of {\em Contemp. Math.}, pages 277--326. Amer. Math. Soc.,
  Providence, RI, 2007.

\bibitem[Knu97]{aocp}
D.~E. Knuth.
\newblock {\em The art of computer programming. {V}ol. 1}.
\newblock Addison-Wesley, Reading, MA, 1997.
\newblock Fundamental algorithms, Third edition.

\bibitem[Lee15]{quillenstruct}
S.~Lee.
\newblock Building a model category out of cofibrations and fibrations: the two
  out of three property for weak equivalences.
\newblock {\em Theory Appl. Categ.}, 30:Paper No. 36, 1163--1181, 2015.

\bibitem[Lur21]{kerodon}
Jacob Lurie.
\newblock Kerodon.
\newblock \url{https://kerodon.net}, 2021.

\bibitem[Qui67]{quillen}
D.~G. Quillen.
\newblock {\em Homotopical algebra}.
\newblock Lecture Notes in Mathematics, No. 43. Springer-Verlag, Berlin-New
  York, 1967.

\bibitem[Rap10]{raptis}
G.~Raptis.
\newblock Homotopy theory of posets.
\newblock {\em Homology Homotopy Appl.}, 12(2):211--230, 2010.

\bibitem[Rez10]{rekz}
C.~Rezk.
\newblock A model category for categories, 2010.
\newblock Unpublished notes -
  \url{https://faculty.math.illinois.edu/~rezk/papers.html}.

\bibitem[Rub21]{rubin}
J.~Rubin.
\newblock Detecting {S}teiner and linear isometries operads.
\newblock {\em Glasg. Math. J.}, 63(2):307--342, 2021.

\bibitem[Sha76]{shapiro}
L.~W. Shapiro.
\newblock A {C}atalan triangle.
\newblock {\em Discrete Math.}, 14(1):83--90, 1976.

\bibitem[SP12]{canonical}
C.~Schommer-Pries.
\newblock The canonical model structure on \textbf{Cat}, 2012.
\newblock Blog post -
  \url{https://sbseminar.wordpress.com/2012/11/16/the-canonical-model-structure-on-cat}.

\bibitem[Sta63]{stasheff}
James~Dillon Stasheff.
\newblock Homotopy associativity of {$H$}-spaces. {I}, {II}.
\newblock {\em Trans. Amer. Math. Soc. 108 (1963), 275-292; ibid.},
  108:293--312, 1963.

\bibitem[Str72]{strom}
A.~Str{\o}m.
\newblock The homotopy category is a homotopy category.
\newblock {\em Arch. Math. (Basel)}, 23:435--441, 1972.

\end{thebibliography}

\begin{figure}
\centering
\includegraphics[scale=0.365]{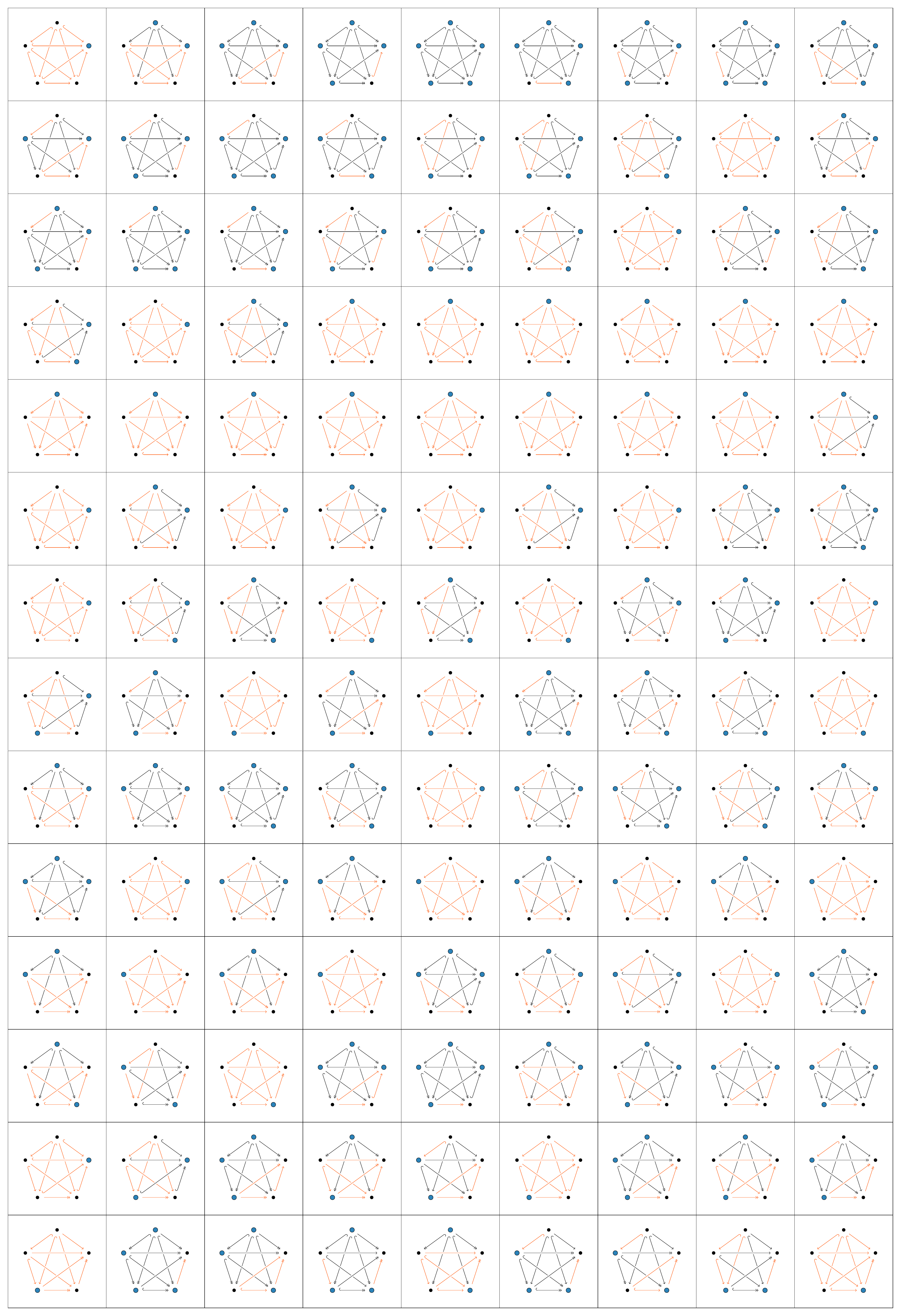}
\vspace*{-5mm}
\caption{The 126 unique Quillen model structures on the poset $[4]$. An orange arrow indicates a weak equivalence, and a larger, blue vertex indicates a bifibrant object.}\label{fig:n4}
\end{figure}

\newpage

\begin{figure}
\centering
\begin{tikzpicture}[scale=1]

\node (n1) [isosceles triangle,isosceles triangle apex angle=60,draw, rotate=90,minimum size =1.65cm] at (0,0) {};

\node (n2) [isosceles triangle,isosceles triangle apex angle=60,draw, rotate=90,minimum size =2.3cm] at (6.4,1.15) {};

\node (n3) [isosceles triangle,isosceles triangle apex angle=60,draw, rotate=90,minimum size =2.3cm] at (4.175,4.97) {};

\node (n4) [isosceles triangle,isosceles triangle apex angle=60,draw, rotate=90,minimum size =2.3cm] at (0,6.5) {};

\node (n5) [isosceles triangle,isosceles triangle apex angle=60,draw, rotate=90,minimum size =2.3cm] at (-4.175,4.97) {};

\node (n6) [isosceles triangle,isosceles triangle apex angle=60,draw, rotate=90,minimum size =2.3cm] at (-6.4,1.15) {};

\node (n7) [isosceles triangle,isosceles triangle apex angle=60,draw, rotate=90,minimum size =2.3cm] at (-5.65,-3.195) {};

\node (n8) [isosceles triangle,isosceles triangle apex angle=60,draw, rotate=90,minimum size =2.3cm] at (-2.25,-6.15) {};

\node (n9) [isosceles triangle,isosceles triangle apex angle=60,draw, rotate=90,minimum size =2.3cm] at (2.25,-6.15) {};

\node (n10) [isosceles triangle,isosceles triangle apex angle=60,draw, rotate=90,minimum size =2.3cm] at (5.65,-3.195) {};

\draw[->,thick,black,shorten <=1mm,black] (n2) -- (n1);

\draw[->, thick,black,shorten <=1mm,black] (n3) -- (n1);

\draw[->, thick,black,  shorten <=1mm,black] (n4) -- (n1);

\draw[->, thick,black,  shorten <=1mm,black] (n5) -- (n1);

\draw[->, thick,black, shorten <=1mm,black] (n6) -- (n1);

\draw[->, thick,black, shorten <=1mm,black] (n7) -- (n1);

\draw[->, thick,black, shorten <=1mm,black] (n8) -- (n1);

\draw[->, thick,black, shorten <=1mm,black] (n9) -- (n1);

\draw[->, thick,black, shorten <=1mm,black] (n10) -- (n1);

\draw[->, thick,black,shorten >=1mm, shorten <=1mm,black] (n3) -- (n2);

\draw[->, thick,black,shorten >=1mm, shorten <=1mm,black] (n3) -- (n4);

\draw[->, thick,black,shorten >=1mm, shorten <=1mm,black] (n6) -- (n2);

\draw[->, thick,black,shorten >=1mm, shorten <=1mm,black] (n6) -- (n3);

\draw[->, thick,black,shorten >=1mm, shorten <=1mm,black] (n6) -- (n4);

\draw[->, thick,black,shorten >=1mm, shorten <=1mm,black] (n6) -- (n5);

\draw[->, thick,black,shorten >=1mm, shorten <=1mm,black] (n7) -- (n2);

\draw[->, thick,black,shorten >=1mm, shorten <=1mm,black] (n7) -- (n5);

\draw[->, thick,black,shorten >=1mm, shorten <=1mm,black] (n8) -- (n2);

\draw[->, thick,black,shorten >=1mm, shorten <=1mm,black] (n8) -- (n3);

\draw[->, thick,black,shorten >=1mm, shorten <=1mm,black] (n8) -- (n4);

\draw[->, thick,black,shorten >=1mm, shorten <=1mm,black] (n8) -- (n5);

\draw[->, thick,black,shorten >=1mm, shorten <=1mm,black] (n8) -- (n6);

\draw[->, thick,black,shorten >=1mm, shorten <=1mm,black] (n8) -- (n7);

\draw[->, thick,black,shorten >=1mm, shorten <=1mm,black] (n8) -- (n9);

\draw[->, thick,black,shorten >=1mm, shorten <=1mm,black] (n8) -- (n10);

\draw[->, thick,black,shorten >=1mm, shorten <=1mm,black] (n9) -- (n2);

\draw[->, thick,black,shorten >=1mm, shorten <=1mm,black] (n9) -- (n3);

\draw[->, thick,black,shorten >=1mm, shorten <=1mm,black] (n9) -- (n4);

\draw[->, thick,black,shorten >=1mm, shorten <=1mm,black] (n9) -- (n10);

\draw[->, thick,black,shorten >=1mm, shorten <=1mm,black] (n10) -- (n4);

\begin{scope}[scale=0.75]
\node (pol) [draw=none, minimum size=1.5cm, regular polygon, regular polygon sides=3] at (0,0) {};
\foreach \n [count=\nu from 0, remember=\n as \lastn, evaluate={\nu+\lastn}] in {1,2,...,3}
\node[anchor=\n*(360/3)-180] at (pol.corner \n){};
\foreach \n in {1,2,...,3}
\draw[fill = black] (pol.corner \n) circle (2pt);
\draw[white, line width=1mm,shorten >=3mm, shorten <=3mm] (pol.corner 1) -- (pol.corner 2);
\draw[right hook->,gRed, shorten >=3mm, shorten <=3mm](pol.corner 1) -- (pol.corner 2);
\draw[white, line width=1mm,shorten >=3mm, shorten <=3mm] (pol.corner 1) -- (pol.corner 3);
\draw[right hook->,gRed, shorten >=3mm, shorten <=3mm](pol.corner 1) -- (pol.corner 3);
\draw[white, line width=1mm,shorten >=3mm, shorten <=3mm] (pol.corner 2) -- (pol.corner 3);
\draw[right hook->,gRed, shorten >=3mm, shorten <=3mm](pol.corner 2) -- (pol.corner 3);
\end{scope}

\begin{scope}[shift={(6.4,1.15)}]
\node (pol) [draw=none, minimum size=2cm, regular polygon, regular polygon sides=3] at (0,0) {};
\foreach \n [count=\nu from 0, remember=\n as \lastn, evaluate={\nu+\lastn}] in {1,2,...,3}
\node[anchor=\n*(360/3)-180] at (pol.corner \n){};
\foreach \n in {1,2,...,3}
\draw[fill = black] (pol.corner \n) circle (2pt);
\draw[white, line width=1mm,shorten >=3mm, shorten <=3mm] (pol.corner 1) -- (pol.corner 2);
\draw[right hook->>, shorten >=3mm, shorten <=3mm](pol.corner 1) -- (pol.corner 2);
\draw[white, line width=1mm,shorten >=3mm, shorten <=3mm] (pol.corner 1) -- (pol.corner 3);
\draw[right hook->>, shorten >=3mm, shorten <=3mm](pol.corner 1) -- (pol.corner 3);
\draw[white, line width=1mm,shorten >=3mm, shorten <=3mm] (pol.corner 2) -- (pol.corner 3);
\draw[right hook->,gRed, shorten >=3mm, shorten <=3mm](pol.corner 2) -- (pol.corner 3);
\end{scope}

\begin{scope}[shift={(4.175,4.97)}]
\node (pol) [draw=none, minimum size=2cm, regular polygon, regular polygon sides=3] at (0,0) {};
\foreach \n [count=\nu from 0, remember=\n as \lastn, evaluate={\nu+\lastn}] in {1,2,...,3}
\node[anchor=\n*(360/3)-180] at (pol.corner \n){};
\foreach \n in {1,2,...,3}
\draw[fill = black] (pol.corner \n) circle (2pt);
\draw[white, line width=1mm,shorten >=3mm, shorten <=3mm] (pol.corner 1) -- (pol.corner 2);
\draw[right hook->>, shorten >=3mm, shorten <=3mm](pol.corner 1) -- (pol.corner 2);
\draw[white, line width=1mm,shorten >=3mm, shorten <=3mm] (pol.corner 1) -- (pol.corner 3);
\draw[right hook->>, shorten >=3mm, shorten <=3mm](pol.corner 1) -- (pol.corner 3);
\draw[white, line width=1mm,shorten >=3mm, shorten <=3mm] (pol.corner 2) -- (pol.corner 3);
\draw[right hook->>, shorten >=3mm, shorten <=3mm](pol.corner 2) -- (pol.corner 3);
\end{scope}

\begin{scope}[shift={(0,6.5)}]
\node (pol) [draw=none, minimum size=2cm, regular polygon, regular polygon sides=3] at (0,0) {};
\foreach \n [count=\nu from 0, remember=\n as \lastn, evaluate={\nu+\lastn}] in {1,2,...,3}
\node[anchor=\n*(360/3)-180] at (pol.corner \n){};
\foreach \n in {1,2,...,3}
\draw[fill = black] (pol.corner \n) circle (2pt);
\draw[white, line width=1mm,shorten >=3mm, shorten <=3mm] (pol.corner 1) -- (pol.corner 2);
\draw[right hook->,gRed, shorten >=3mm, shorten <=3mm](pol.corner 1) -- (pol.corner 2);
\draw[white, line width=1mm,shorten >=3mm, shorten <=3mm] (pol.corner 1) -- (pol.corner 3);
\draw[right hook->, shorten >=3mm, shorten <=3mm](pol.corner 1) -- (pol.corner 3);
\draw[white, line width=1mm,shorten >=3mm, shorten <=3mm] (pol.corner 2) -- (pol.corner 3);
\draw[right hook->>, shorten >=3mm, shorten <=3mm](pol.corner 2) -- (pol.corner 3);
\end{scope}

\begin{scope}[shift={(-4.175,4.97)}]
\node (pol) [draw=none, minimum size=2cm, regular polygon, regular polygon sides=3] at (0,0) {};
\foreach \n [count=\nu from 0, remember=\n as \lastn, evaluate={\nu+\lastn}] in {1,2,...,3}
\node[anchor=\n*(360/3)-180] at (pol.corner \n){};
\foreach \n in {1,2,...,3}
\draw[fill = black] (pol.corner \n) circle (2pt);
\draw[white, line width=1mm,shorten >=3mm, shorten <=3mm] (pol.corner 1) -- (pol.corner 2);
\draw[->>,gRed, shorten >=3mm, shorten <=3mm](pol.corner 1) -- (pol.corner 2);
\draw[white, line width=1mm,shorten >=3mm, shorten <=3mm] (pol.corner 1) -- (pol.corner 3);
\draw[right hook->,gRed, shorten >=3mm, shorten <=3mm](pol.corner 1) -- (pol.corner 3);
\draw[white, line width=1mm,shorten >=3mm, shorten <=3mm] (pol.corner 2) -- (pol.corner 3);
\draw[right hook->,gRed, shorten >=3mm, shorten <=3mm](pol.corner 2) -- (pol.corner 3);
\end{scope}

\begin{scope}[shift={(-6.4,1.15)}]
\node (pol) [draw=none, minimum size=2cm, regular polygon, regular polygon sides=3] at (0,0) {};
\foreach \n [count=\nu from 0, remember=\n as \lastn, evaluate={\nu+\lastn}] in {1,2,...,3}
\node[anchor=\n*(360/3)-180] at (pol.corner \n){};
\foreach \n in {1,2,...,3}
\draw[fill = black] (pol.corner \n) circle (2pt);
\draw[white, line width=1mm,shorten >=3mm, shorten <=3mm] (pol.corner 1) -- (pol.corner 2);
\draw[->>,gRed, shorten >=3mm, shorten <=3mm](pol.corner 1) -- (pol.corner 2);
\draw[white, line width=1mm,shorten >=3mm, shorten <=3mm] (pol.corner 1) -- (pol.corner 3);
\draw[right hook->>, shorten >=3mm, shorten <=3mm](pol.corner 1) -- (pol.corner 3);
\draw[white, line width=1mm,shorten >=3mm, shorten <=3mm] (pol.corner 2) -- (pol.corner 3);
\draw[right hook->>, shorten >=3mm, shorten <=3mm](pol.corner 2) -- (pol.corner 3);
\end{scope}

\begin{scope}[shift={(-5.65,-3.195)}]
\node (pol) [draw=none, minimum size=2cm, regular polygon, regular polygon sides=3] at (0,0) {};
\foreach \n [count=\nu from 0, remember=\n as \lastn, evaluate={\nu+\lastn}] in {1,2,...,3}
\node[anchor=\n*(360/3)-180] at (pol.corner \n){};
\foreach \n in {1,2,...,3}
\draw[fill = black] (pol.corner \n) circle (2pt);
\draw[white, line width=1mm,shorten >=3mm, shorten <=3mm] (pol.corner 1) -- (pol.corner 2);
\draw[->>,gRed, shorten >=3mm, shorten <=3mm](pol.corner 1) -- (pol.corner 2);
\draw[white, line width=1mm,shorten >=3mm, shorten <=3mm] (pol.corner 1) -- (pol.corner 3);
\draw[->>,gRed, shorten >=3mm, shorten <=3mm](pol.corner 1) -- (pol.corner 3);
\draw[white, line width=1mm,shorten >=3mm, shorten <=3mm] (pol.corner 2) -- (pol.corner 3);
\draw[right hook->,gRed, shorten >=3mm, shorten <=3mm](pol.corner 2) -- (pol.corner 3);
\end{scope}

\begin{scope}[shift={(-2.25,-6.15)}]
\node (pol) [draw=none, minimum size=2cm, regular polygon, regular polygon sides=3] at (0,0) {};
\foreach \n [count=\nu from 0, remember=\n as \lastn, evaluate={\nu+\lastn}] in {1,2,...,3}
\node[anchor=\n*(360/3)-180] at (pol.corner \n){};
\foreach \n in {1,2,...,3}
\draw[fill = black] (pol.corner \n) circle (2pt);
\draw[white, line width=1mm,shorten >=3mm, shorten <=3mm] (pol.corner 1) -- (pol.corner 2);
\draw[->>,gRed, shorten >=3mm, shorten <=3mm](pol.corner 1) -- (pol.corner 2);
\draw[white, line width=1mm,shorten >=3mm, shorten <=3mm] (pol.corner 1) -- (pol.corner 3);
\draw[->>,gRed, shorten >=3mm, shorten <=3mm](pol.corner 1) -- (pol.corner 3);
\draw[white, line width=1mm,shorten >=3mm, shorten <=3mm] (pol.corner 2) -- (pol.corner 3);
\draw[->>,gRed, shorten >=3mm, shorten <=3mm](pol.corner 2) -- (pol.corner 3);
\end{scope}

\begin{scope}[shift={(2.25,-6.15)}]
\node (pol) [draw=none, minimum size=2cm, regular polygon, regular polygon sides=3] at (0,0) {};
\foreach \n [count=\nu from 0, remember=\n as \lastn, evaluate={\nu+\lastn}] in {1,2,...,3}
\node[anchor=\n*(360/3)-180] at (pol.corner \n){};
\foreach \n in {1,2,...,3}
\draw[fill = black] (pol.corner \n) circle (2pt);
\draw[white, line width=1mm,shorten >=3mm, shorten <=3mm] (pol.corner 1) -- (pol.corner 2);
\draw[right hook->>, shorten >=3mm, shorten <=3mm](pol.corner 1) -- (pol.corner 2);
\draw[white, line width=1mm,shorten >=3mm, shorten <=3mm] (pol.corner 1) -- (pol.corner 3);
\draw[->>, shorten >=3mm, shorten <=3mm](pol.corner 1) -- (pol.corner 3);
\draw[white, line width=1mm,shorten >=3mm, shorten <=3mm] (pol.corner 2) -- (pol.corner 3);
\draw[->>,gRed, shorten >=3mm, shorten <=3mm](pol.corner 2) -- (pol.corner 3);
\end{scope}

\begin{scope}[shift={(5.65,-3.195)}]
\node (pol) [draw=none, minimum size=2cm, regular polygon, regular polygon sides=3] at (0,0) {};
\foreach \n [count=\nu from 0, remember=\n as \lastn, evaluate={\nu+\lastn}] in {1,2,...,3}
\node[anchor=\n*(360/3)-180] at (pol.corner \n){};
\foreach \n in {1,2,...,3}
\draw[fill = black] (pol.corner \n) circle (2pt);
\draw[white, line width=1mm,shorten >=3mm, shorten <=3mm] (pol.corner 1) -- (pol.corner 2);
\draw[right hook->,gRed, shorten >=3mm, shorten <=3mm](pol.corner 1) -- (pol.corner 2);
\draw[white, line width=1mm,shorten >=3mm, shorten <=3mm] (pol.corner 1) -- (pol.corner 3);
\draw[->,gRed, shorten >=3mm, shorten <=3mm](pol.corner 1) -- (pol.corner 3);
\draw[white, line width=1mm,shorten >=3mm, shorten <=3mm] (pol.corner 2) -- (pol.corner 3);
\draw[->>,gRed, shorten >=3mm, shorten <=3mm](pol.corner 2) -- (pol.corner 3);
\end{scope}
\end{tikzpicture}
\vspace*{10mm}
\caption{The collection of model structures on $[2]$, where an arrow indicates that the identity functor is left Quillen. We have used an orange arrow to indicate a weak equivalence.}\label{2quillen}
\end{figure}
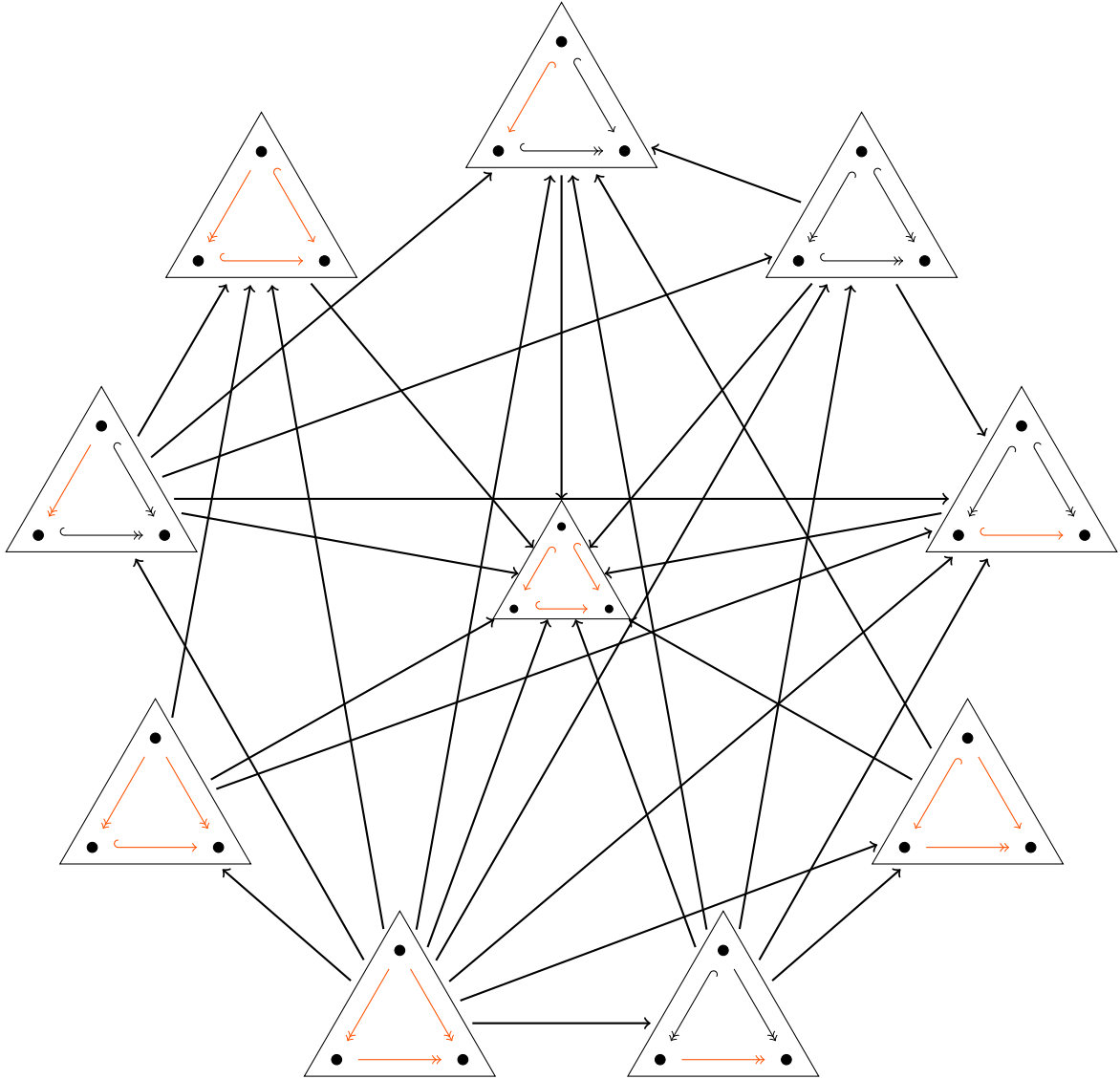

\end{document}